\newtheorem{theorem}{Theorem}[section]
\newtheorem{lemma}[theorem]{Lemma}
\newtheorem{proposition}[theorem]{Proposition}
\newtheorem{corollary}[theorem]{Corollary}
\newtheorem{example}[theorem]{Example}
\theoremstyle{definition}
\newtheorem{definition}{Definition}[section]
\newtheorem{notation}{Notation}[section]
\theoremstyle{remark}
\newtheorem{remark}{Remark}
\newcommand\hil{\mathcal H}                        
\newcommand\hilk{\mathcal K}                       
\newcommand\ssL{\mathcal L}                       
\newcommand\bh{\mathcal B(\mathcal H)}    
\newcommand\bk{\mathcal B(\mathcal K)}     
\newcommand\bhpi{\mathcal B(\mathcal H_\pi)}
\newcommand\mn{{\mathcal M}_n}
\newcommand\mm{{\mathcal M}_m}
\newcommand\mk{{\mathcal M}_k}
\newcommand\mpee{{\mathcal M}_p}
\newcommand\mr{{\mathcal M}_r}
\newcommand\mtwo{{\mathcal M}_2}
\newcommand\tn{{\mathcal T}_n}            
\newcommand\fn{{\mathbb F}_n}         
\newcommand\fnl{{\mathbb F}_{n-1}}
\newcommand\fni{{\mathbb F}_{\infty}}
\newcommand\ose{{\mathcal E}}
\newcommand\osi{{\mathcal I}}
\newcommand\osr{{\mathcal R}}
\newcommand\oss{{\mathcal S}}
\newcommand\ost{{\mathcal T}}
\newcommand\osw{{\mathcal W}}
\newcommand\osu{{\mathcal U}}
\newcommand\osv{{\mathcal V}}
\newcommand\kj{{\mathcal J}}                    
\newcommand\ie{{\mathcal I(\mathcal S)}}  
\newcommand\cminp{{ C}^{\rm min}_p}
\newcommand\cmaxp{{C}^{\rm max}_p}
\newcommand\dmaxp{{D}^{\rm max}_p}
\newcommand\dmaxone{{D}^{\rm max}_1}
\newcommand\ccomp{{ C}^{\rm comm}_p}
\newcommand\omin{\otimes_{\rm min}}
\newcommand\omax{\otimes_{\rm max}}
\newcommand\oc{\otimes_{\rm c}}
\newcommand\pstar{{\rm (}\mathfrak W{\rm )}}   
\newcommand\pstarn{{\rm (}\mathfrak W_n{\rm )}} 
\newcommand\ps{{\rm (}\mathfrak S{\rm )}}   
\newcommand\psn{{\rm (}\mathfrak S_n{\rm )}}
\newcommand\csta{{\mathcal A}}
\newcommand\cstb{{\mathcal B}}
\newcommand\ia{{\mathcal I(A)}}
\newcommand\cstar{{\rm C}^*}                              
\newcommand\cstare{{\rm C}_{\rm e}^*}               
\title[Operator System Quotients and Tensor Products]{Operator System Quotients of Matrix Algebras and their Tensor Products}
\author{Douglas Farenick}
\address{Department of Mathematics and Statistics, University of Regina,
Regina, Saskatchewan S4S 0A2, Canada}
\author{Vern I.~Paulsen}
\address{Department of Mathematics, University of Houston,
Houston, Texas 77204-3476, U.S.A.}
\begin{document}
\maketitle

\begin{abstract} If  $\phi:\oss\rightarrow\ost$ 
is a completely positive (cp) linear map
of operator systems and if $\kj=\ker\phi$, then the quotient
vector space $\oss/\kj$ may be endowed with a matricial ordering through
which $\oss/\kj$ has the structure of an operator system. Furthermore,
there is a uniquely determined cp map $\dot{\phi}:\oss/\kj \rightarrow\ost$ such that
$\phi=\dot{\phi}\circ q$, where $q$ is the canonical
linear map of $\oss$ onto $\oss/\kj$. The cp map $\phi$ is called a
complete quotient map if $\dot{\phi}$ is a complete order isomorphism between the
operator systems $\oss/\kj$ and $\ost$. Herein we study certain quotient
maps in the cases where $\oss$ is a full matrix algebra or a full subsystem of
tridiagonal matrices. 

Our study of operator system quotients of matrix algebras and tensor products has applications to operator algebra
theory. In particular, we give a new, simple proof of Kirchberg's Theorem
$\cstar(\fni)\omin\bh=\cstar(\fni)\omax\bh$, show that an affirmative solution to the Connes Embedding Problem 
is implied by various matrix-theoretic problems, and give a new characterisation of  
unital C$^*$-algebras that have the weak expectation property.
\end{abstract}
 
\section*{Introduction}

The C$^*$-algebra $\bh$ of bounded linear operators acting on a Hilbert 
space $\hil$ and the group C$^*$-algebra $\cstar(\fni)$ of the free group $\fni$
with countably infinitely many generators are both universal objects in operator
algebra theory. Therefore, 
it is a remarkable fact that  $\cstar(\fni)\omin\bh\,=\,\cstar(\fni)\omax\bh$, which
is a well known and important theorem of Kirchberg \cite[Corollary 1.2]{kirchberg1994}. Kirchberg's proof
was achieved by first showing that the C$^*$-algebra $\cstar(\fni)$ has the lifting property \cite[Lemma 3.3]{kirchberg1994}
and by then invoking his theorem  \cite[Theorem 1.1]{kirchberg1994} that states
$\csta\omin\bh\,=\,\csta\omax\bh$ for every separable unital C$^*$-algebra $\csta$ that has the
lifting property.  A more direct proof of Kirchberg's theorem 
on the uniqueness of the C$^*$-norm for $\cstar(\fni)\otimes\bh$ 
was later found by Pisier \cite{pisier1996},\cite[Chapter 13]{Pisier-book} using 
a mix of operator algebra and operator space theory. Boca \cite{boca1997} made a further extension, replacing $\cstar(\fni)$
with a free product of C$^*$-algebras $\csta_i$ in which each of the maps ${\rm id}_{\csta_i}$ is locally liftable. An exposition of the
results of Kirchberg and Pisier can be found in \cite{ozawa2004}.

One of our main results in this paper is a new proof of Kirchberg's theorem 
(see Corollary \ref{k thm}), obtained herein by reducing Kirchberg's theorem 
to the verification of a certain property (Theorem \ref{pstar thm})
of a finite-dimensional quotient operator system $\osw_n$ whose C$^*$-envelope is $\cstar(\fnl)$.

Our study of operator system quotients of the matrix algebra $\mn$ and of the full operator subsystem $\tn\subseteq\mn$
of tridiagonal matrices (full in the sense that $\cstar(\tn)=\mn$)
allows us to formulate matrix-theoretic questions in Section \S\ref{S:kp-cep}
whose resolution in the affirmative would result in a solution to the Connes Embedding Problem. Our
approach to this is by way of another celebrated theorem of Kirchberg \cite[Proposition 8]{kirchberg1993}:
$\cstar(\fni)\omin\cstar(\fni)=\cstar(\fni)\omax\cstar(\fni)$ if and only if 
every separable II${}_1$-factor can be embedded as a subfactor of the ultrapower 
of the hyperfinite II${}_1$-factor. 
It is of course an open problem whether these equivalent statements are true. The problem of whether or not
$\cstar(\fni)\omin\cstar(\fni)=\cstar(\fni)\omax\cstar(\fni)$ is known as the \emph{Kirchberg Problem}, while the
latter problem involving separable II${}_1$-factors is the \emph{Connes Embedding Problem}.

The methods we use in this paper draw upon recent work of the second author and others 
 \cite{kavruk--paulsen--todorov--tomforde2009,kavruk--paulsen--todorov--tomforde2010,paulsen--todorov--tomforde2010}
on matricially ordered vector spaces, tensor products of operator systems, and quotients
of operator systems.

An operator system is a triple consisting of: (i) all  $*$-vector spaces $\mn(\oss)$ of
$n\times n$ matrices over a fixed $*$-vector space $\oss$;
(ii) distinguished cones $\mn(\oss)_+$ in $\mn(\oss)$ that give rise to a matricial ordering of $\oss$; and (iii)
a distinguished element $e\in\oss$ which is an Archimedean order unit.
The axioms for operator systems are given in \cite{choi--effros1977,Paulsen-book}, and so we will not repeat them here. However,
recall a very important and fundamental fact \cite{choi--effros1977}: 
every operator system $\oss$ arises as an operator subsystem of $\bh$, for some Hilbert space
$\hil$. This result is known as the Choi--Effros Theorem.

Two important C$^*$-algebras that arise in connection with a given operator system $\oss$ are the C$^*$-envelope $\cstare(\oss)$
and the injective envelope $\ie$ of $\oss$. The latter algebra $\ie$ is very large (usually nonseparable) and there exist embeddings so that
$\oss\subseteq\cstar_e(\oss)\subseteq\ie$ and such that $\cstare(\oss)$ is the C$^*$-subalgebra of $\ie$ generated by $\oss$.
These are ``enveloping'' algebras in the sense that $\cstare(\oss)$ is a quotient of every C$^*$-algebra generated by (a copy of) $\oss$
and $\ie$ rigidly contains $\oss$, which is to say that if a linear unital completely positive (ucp) map $\omega:\ie\rightarrow\ie$ satisfies
$\omega_{\vert \oss}={\rm id}_{\oss}$, then necessarily $\omega={\rm id}_{\ie}$. See \cite[Chapter15]{Paulsen-book} for further details on
C$^*$- and injective envelopes.
 
In contrast to progress on tensor products and quotients in operator space theory (see \cite{Pisier-book}, for example),
an analogous theory in the category of operator systems is only now emerging. In \S\ref{S:review} we
review those aspects of the theory that are required for our work herein. In Sections \S\ref{S:osw} and \S\ref{S:oss} we study
certain quotient operator systems that arise from quotients of operator systems of finite matrices, and in Sections \S\ref{S:osw tensor products}
and \S\ref{S:kp-cep} we consider tensor products of these quotients and obtain the earlier mentioned applications to 
Kirchberg's Theorem and the Connes Embedding Problem. In Section \S\ref{S:WEP} we use complete quotient maps to characterise  
unital C$^*$-algebras that possess the weak expectation property (WEP), and in
\S\ref{S:ie} the injective envelope of $\cstar(\fn)$ is determined.

 \section{Tensor Products, Quotients, and Duals of Operator Systems: A Review}\label{S:review}
 
 In this section we review
 some of the fundamental facts, established in
 \cite{kavruk--paulsen--todorov--tomforde2009,kavruk--paulsen--todorov--tomforde2010}, concerning
 tensor products, quotients, and duals of operator systems,
 and introduce the notion of a complete quotient map.
 
Some basic notation: (i)
the Archimedean order unit $e$ of an operator system $\oss$
is generally denoted by $1$, but we will sometimes revert to the use of $e$ in cases where the order unit 
is not canonically given (for example, when considering duals of operator systems); 
(ii) for a linear map $\phi:\oss\rightarrow\ost$, the map $\phi^{(n)}:\mn(\oss)\rightarrow\mn(\ost)$ is defined by
$\phi^{(n)}\left([x_{ij}]_{i,j}\right)=[\phi(x_{ij})]_{i,j}$; (iii) for any operator systems $\oss$ and $\ost$, 
$\oss\otimes\ost$ shall denote their algebraic tensor product.

 \subsection{Quotients of operator systems}

 Assume that $\oss$ is an operator system and that $\kj\subset\oss$ is norm-closed $*$-subspace that does not
 contain $1_\oss$. Let $q:\oss\rightarrow\oss/\kj$
 denote the canonical linear map of $\oss$ onto the vector space $\oss/\kj$. The vector space 
 $\oss/\kj$ has an induced involution
 defined by $q(X)^*=q(X^*)$. 
 
 We will denote elements of $\oss$ by uppercase letters, such as $X$, and elements of $\oss/\kj$
 by lowercase letters, as in $x=q(X)$, or by ``dots'' as in $\dot{X}=q(X)$.
  
 For each $n\in\mathbb N$, 
 there is a linear isomorphism
 \[
 (\oss/\kj)\otimes \mn\,\cong\,(\oss\otimes\mn)/(\kj\otimes \mn)\,.
 \]
 The canonical linear surjection $\oss\otimes\mn\rightarrow (\oss\otimes\mn)/(\kj\otimes \mn)$ is denoted by $q^{(n)}$ and we shall view $q^{(n)}(X)$
 as an $n\times n$ matrix $x=\dot{X}$ of $\dot{X}_{ij}\in\oss/\kj$, thereby identifying $(\oss/\kj)\otimes\mn$ with $\mn(\oss/\kj)$.
 Further, each
matrix space $\mn(\oss/\kj)$ is an involutive vector space under the involution $h=[h_{ij}]_{1\leq i,j\leq n}
 \mapsto {h}^*=[{h}_{ji}^*]_{1\leq i,j\leq n}$. The real vector space of hermitian (or selfadjoint elements) is denoted by
 $ \mn(\oss/\kj)_{\rm sa}$. 
 
 Consider the subset
$D_n(\oss/\kj)$ of $\mn(\oss/\kj)_{\rm sa}$ defined by
 \[
 D_n(\oss/\kj)\;=\;\{\dot{H}\,:\,\exists\,K\in  \mn(\kj)_{\rm sa}\;\mbox{such that }  H+K \in\mn(\oss)_+\}\,.
 \]
 The collection $\{D_n(\oss/\kj)\}_{n\in\mathbb N}$ is a family of cones 
 that endow $\oss/\kj$ with the structure of a matricially ordered space.
 However, this ordering will not be an operator system on $\oss/\kj$ in general.
 
 \begin{definition}
 A subspace $\kj\subset\oss$ is a \emph{kernel} if there are
 an operator system $\ost$ and a completely positive linear map $\phi:\oss\rightarrow\ost$
 such that $\kj=\ker\phi$. 
 \end{definition}
 
 If $\kj\subset\oss$ is a kernel, then 
 define a subset $C_n(\oss/\kj)\subset  \mn(\oss/\kj)_{\rm sa}$ by
 \[
 C_n(\oss/\kj)\,=\, \{\dot{H}\,:\,\forall\,\varepsilon>0\;\exists\,K_\varepsilon\in \mn(\kj)_{\rm sa}
 \; \mbox{such that }\varepsilon 1+H+K_\varepsilon\in\mn(\oss)_+\}\,.
  \]
That is,
\[
C_n(\oss/\kj)\;=\;\{\dot{H}\,:\, \varepsilon\dot{1}+\dot{H}\in  D_n(\oss/\kj),\,\forall\,\varepsilon>0\}\,.
 \]
 The collection $\{C_n(\oss/\kj)\}_{n\in\mathbb N}$ is a family of cones that endow $\oss/\kj$ with the structure of an operator system
 with (Archimedean) order unit $\dot{1}=q(1)$. 
 
 \begin{definition}
The operator system $\left( \oss/\kj, \{C_n(\oss/\kj)\}_{n\in\mathbb N}, q(1)\right)$ 
 is called a \emph{quotient operator system}.
 \end{definition}
 
 The first basic result concerning quotient operator systems is the 
 First Isomorphism Theorem \cite[Proposition 3.6]{kavruk--paulsen--todorov--tomforde2010}.
 
 \begin{theorem}\label{1st iso thm} {\rm (First Isomorphism Theorem)} If $\phi:\oss\rightarrow\ost$ is a nonzero
 completely positive linear map, then there is a unique completely positive linear map $\dot{\phi}:\oss/\ker\phi\rightarrow\ost$
 such that $\phi=\dot{\phi}\circ q$.
 \end{theorem}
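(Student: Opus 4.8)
The plan is to produce $\dot\phi$ first as an ordinary linear map via the universal property of the quotient vector space, then to upgrade it to a completely positive map by lifting positive matrices from $\oss/\kj$ back up to $\oss$, pushing them through $\phi^{(n)}$, and absorbing the resulting error terms with an Archimedean-closure argument; the uniqueness assertion will come essentially for free.

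Before anything else I would check that $\kj=\ker\phi$ really is a kernel in the sense of the preceding definition, i.e.\ a norm-closed $*$-subspace of $\oss$ with $1_\oss\notin\kj$, so that the quotient operator system $\left(\oss/\kj,\{C_n(\oss/\kj)\},q(1)\right)$ is even defined. Norm-closedness is automatic because a completely positive map of operator systems is bounded. For the order unit, note that if $\phi(1_\oss)=0$ then positivity forces $0\le\phi(X)\le\|X\|\,\phi(1_\oss)=0$ for every $X\in\oss_+$, hence $\phi=0$ on the span of $\oss_+$, i.e.\ $\phi=0$; so the standing hypothesis $\phi\neq0$ gives $\phi(1_\oss)\neq0$ and thus $1_\oss\notin\kj$. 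This is the only point at which the nonzero hypothesis is used, and it is exactly what the definition of $\oss/\kj$ requires.

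Since $\ker q=\kj=\ker\phi$, the universal property of the quotient vector space yields a unique linear map $\dot\phi:\oss/\kj\to\ost$ with $\phi=\dot\phi\circ q$, and applying this identity entrywise gives $\phi^{(n)}=\dot\phi^{(n)}\circ q^{(n)}$ for every $n$. Because $q$ is surjective, any map $\psi$ satisfying $\phi=\psi\circ q$ must equal $\dot\phi$, so uniqueness of $\dot\phi$ holds among all linear maps, a fortiori among all completely positive ones; the real content of the theorem is that this particular $\dot\phi$ is completely positive.

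For that, fix $n$ and $\dot H\in C_n(\oss/\kj)$, choose a self-adjoint lift $H\in\mn(\oss)_{\rm sa}$ with $q^{(n)}(H)=\dot H$, and use the definition of $C_n(\oss/\kj)$: for each $\varepsilon>0$ there is $K_\varepsilon\in\mn(\kj)_{\rm sa}$ with $\varepsilon 1+H+K_\varepsilon\in\mn(\oss)_+$. Since $\mn(\kj)=\ker\phi^{(n)}$, positivity of $\phi^{(n)}$ gives $\varepsilon\,\phi(1_\oss)\otimes I_n+\dot\phi^{(n)}(\dot H)\ge0$ in $\mn(\ost)$; dominating $\phi(1_\oss)$ by a scalar multiple $c\,1_\ost$ then yields $\dot\phi^{(n)}(\dot H)+\varepsilon c\,1_{\mn(\ost)}\ge0$ for all $\varepsilon>0$, and the Archimedean property of the order unit of $\ost$ (hence of $\mn(\ost)$) forces $\dot\phi^{(n)}(\dot H)\in\mn(\ost)_+$. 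This final paragraph is the step that carries the weight: it is where the non-unitality of $\phi$ has to be handled and where the Archimedean-closed definition of the cones $C_n$, rather than the cones $D_n$, is genuinely needed; everything else is bookkeeping.
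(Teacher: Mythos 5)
Your argument is correct: the check that $1_\oss\notin\ker\phi$ (so the quotient operator system is defined), the factorisation through $q$ by linear algebra with uniqueness from surjectivity of $q$, and the verification of complete positivity by lifting an element of $C_n(\oss/\ker\phi)$, applying $\phi^{(n)}$ to kill $K_\varepsilon$, dominating $\phi(1_\oss)$ by a scalar multiple of $1_\ost$, and invoking the Archimedean property of the matrix order unit are all sound. Note that the paper itself gives no proof of this statement --- it is quoted from Kavruk--Paulsen--Todorov--Tomforde \cite[Proposition 3.6]{kavruk--paulsen--todorov--tomforde2010} --- and your proof is essentially the standard argument given there, so nothing further is needed.
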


 Associated with $D_n(\oss/\kj)$ and $C_n(\oss/\kj)$ are 
 the subsets $\mathcal D_n(\oss/\kj)$ and $\mathcal C_n(\oss/\kj)$ of $(\oss\otimes\mn)_{\rm sa}$
 defined by
 \[
 \mathcal D_n(\oss/\kj)\;=\;q_n^{-1}\left(D_n(\oss/\kj) \right) \quad\mbox{and}\quad
\mathcal C_n(\oss/\kj)\;=\; q_n^{-1}\left( C_n(\oss/\kj)\right)\,.
\]

 Clearly $D_n(\oss/\kj)\subseteq C_n(\oss/\kj)$. Examples in which the inclusion $D_n(\oss/\kj)\subseteq C_n(\oss/\kj)$ is proper are given in 
 \cite{kavruk--paulsen--todorov--tomforde2010}.

\begin{definition}
A kernel $\kj\subset\oss$ is \emph{completely order proximinal} if 
 $D_n(\oss/\kj)=C_n(\oss/\kj)$, for every $n\in\mathbb N$. 
\end{definition}

This notion of a completely order proximinal kernel will be of importance for the quotient operator
systems we study in later sections.

 \begin{example} If $\csta$ is a unital C$^*$-algebra and if $\kj\subset \csta$ is the kernel of a $*$-homomorphism, then 
 $\kj$ is completely order proximinal and
 \[
\left( (\csta/\kj)\otimes\mn\right)_+\,=\,D_n(\csta/\kj)\,=\,C_n(\csta/\kj)\,,\;\mbox{for all } n\in\mathbb N\,.
\]
 \end{example}
 
 \begin{proof} Let $\kj=\ker\pi$, where $\pi$ is a $*$-homomorphism which, without loss of generality, we assume to be unital. 
 Note that $D_n(\csta/\kj)$ is precisely the positive cone $(\csta\otimes\mn)_{+}$
 of the quotient C$^*$-algebra $\csta/\kj$.
 Because any positive element $h=\dot{H}$ of
 $(\csta/\kj)\otimes\mn \cong (\csta\otimes\mn)/(\kj\otimes\mn)$
 lifts to an element of the form $H+K$, where $K\in \kj\otimes \mn$ and $H\in(\csta\otimes\mn)_+$,
 \[
 \mathcal D_n(\csta/\kj)\;=\;\{H+K\,:\,H\in(\csta\otimes\mn)_{+}, \;K\in (\kj\otimes\mn)_{\rm sa}\}\,.
 \]
 Suppose now that $H\in\mathcal C_n(\csta/\kj)$. Then, for every $\varepsilon>0$, 
 $\varepsilon 1+H\in \mathcal D_n(\csta/\kj)$ and so every $\varepsilon q_n(1)+q_n(H)=\varepsilon\dot{1}+h$ is
 positive in $(\csta/\kj)\otimes\mn$. But as $\dot{1}\in\csta/\kj$ is an Archimedean order unit for the 
 quotient C$^*$-algebra $\csta/\kj$, $\varepsilon \dot{1}+h$ is positive for all $\varepsilon>0$ only if $h$ is positive,
 which is to say that $H\in  \mathcal D_n(\csta/\kj)$.
 \end{proof}

The proof above shows that, in the case of C$^*$-agebras $\csta$ and ideals $\kj\subset\csta$, $ \mathcal D_n(\csta/\kj)=
\mathcal C_n(\csta/\kj)$ for every $n\in\mathbb N$. This in fact leads to the following useful
criterion for completely order proximinal kernels in operator systems.

\begin{proposition}\label{proxchar} Let $\kj$ be a kernel in an operator system $\oss$. Then 
\begin{enumerate}
\item $\mathcal D_n(\oss/\kj) = \mn(\kj)+\mn(\oss)_+$,
\item $\mathcal C_n(\oss/\kj)$ is the norm closure of $\mn(\kj)+\mn(\oss)_+$, and
\item $\kj$ is completely order proximinal if and only if $\mn(\kj)+\mn(\oss)_+$ 
is closed for every $n \in \mathbb N$.
\end{enumerate}
\end{proposition}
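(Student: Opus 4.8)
The plan is to push the definitions of $D_n(\oss/\kj)$ and $C_n(\oss/\kj)$ back through the quotient map $q_n$ and compare them with the wedge $\mn(\kj)_{\rm sa}+\mn(\oss)_+$. I would treat the three items in turn, since (1) feeds (2) and (1)--(2) feed (3).

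For (1): a selfadjoint $H\in\mn(\oss)$ belongs to $\mathcal D_n(\oss/\kj)$ precisely when $\dot H\in D_n(\oss/\kj)$, i.e.\ when some $K\in\mn(\kj)_{\rm sa}$ satisfies $H+K\in\mn(\oss)_+$; rewriting $H=(-K)+(H+K)$ puts $H$ in $\mn(\kj)+\mn(\oss)_+$, and conversely if $H=L+P$ is selfadjoint with $L\in\mn(\kj)$ and $P\in\mn(\oss)_+$ then $L=H-P$ is selfadjoint, hence in $\mn(\kj)_{\rm sa}$, and $K:=-L$ witnesses $\dot H\in D_n(\oss/\kj)$. (Because $\kj$ is a $*$-subspace, under the real/imaginary-part splitting $\mn(\kj)+\mn(\oss)_+$ is exactly $(\mn(\kj)_{\rm sa}+\mn(\oss)_+)\oplus i\,\mn(\kj)_{\rm sa}$, so moving between it and its selfadjoint part is harmless, and I would not dwell on this bookkeeping.)

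For (2): I would use the reformulation $C_n(\oss/\kj)=\{\dot H:\varepsilon\dot 1+\dot H\in D_n(\oss/\kj)\ \text{for all }\varepsilon>0\}$ recorded in the text; applying $q_n^{-1}$ and invoking (1) gives
\[
\mathcal C_n(\oss/\kj)=\{H\in\mn(\oss)_{\rm sa}:\ \varepsilon 1+H\in\mn(\kj)_{\rm sa}+\mn(\oss)_+\ \text{for all }\varepsilon>0\}.
\]
Writing $\osv=\mn(\kj)_{\rm sa}+\mn(\oss)_+$ (a wedge with $\osv+\mn(\oss)_+=\osv$), it remains to identify this set with $\overline{\osv}$. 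If $\varepsilon 1+H\in\osv$ for all $\varepsilon>0$, then letting $\varepsilon\downarrow 0$ gives $H\in\overline{\osv}$. Conversely, for $H\in\overline{\osv}$ and $\varepsilon>0$ pick $V\in\osv$ with $\|H-V\|<\varepsilon$; since the operator system norm on $\mn(\oss)_{\rm sa}$ is the Archimedean order-unit norm, $\varepsilon 1+(H-V)\in\mn(\oss)_+$, so $\varepsilon 1+H=V+(\varepsilon 1+H-V)\in\osv+\mn(\oss)_+=\osv$. Thus $\mathcal C_n(\oss/\kj)=\overline{\osv}$, and as $\mn(\oss)_{\rm sa}$ is norm-closed in $\mn(\oss)$ this is the norm closure of $\mn(\kj)+\mn(\oss)_+$, which is (2).

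For (3): surjectivity of $q_n$ gives $q_n(\mathcal D_n(\oss/\kj))=D_n(\oss/\kj)$ and $q_n(\mathcal C_n(\oss/\kj))=C_n(\oss/\kj)$ — every selfadjoint element of $\mn(\oss/\kj)$ lifts selfadjointly — so $D_n(\oss/\kj)=C_n(\oss/\kj)$ iff $\mathcal D_n(\oss/\kj)=\mathcal C_n(\oss/\kj)$; by (1) and (2) this holds iff $\mn(\kj)_{\rm sa}+\mn(\oss)_+$ is closed, equivalently (by the splitting remark) iff $\mn(\kj)+\mn(\oss)_+$ is closed. Quantifying over $n$ yields the stated equivalence. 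The argument is essentially definition-chasing; the one genuinely analytic ingredient, used in (2), is that $\|X\|\le\varepsilon$ for selfadjoint $X$ forces $\varepsilon 1\pm X\ge 0$ — i.e.\ the matrix norm restricts to the order-unit norm — so the only real care needed is in the selfadjoint/complexification bookkeeping of (1) and (3) rather than in any hard step.
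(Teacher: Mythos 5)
Your proof is correct and follows essentially the same route as the paper's: item (1) by unwinding the definition of $D_n(\oss/\kj)$, item (2) by the two-sided approximation argument whose only analytic ingredient is that a selfadjoint $X$ with $\|X\|<\varepsilon$ satisfies $\varepsilon 1+X\in\mn(\oss)_+$, and item (3) by combining the first two. The extra selfadjoint/complexification bookkeeping you note is glossed over in the paper as well and causes no difficulty.
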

\begin{proof}
The first statement is obvious. To see the second statement, first note that
$\mathcal C_n(\oss/\kj) = \{ H : \forall \epsilon > 0, \epsilon 1 + H \in \mathcal
D_n(\oss/\kj) \}= \{ H : \forall \epsilon > 0,  \epsilon 1 + H \in
\mn(\oss)_+ + \mn(\kj)  \}.$ Thus, if $H \in \mathcal
C_n(\oss/\kj)$ then for every $\epsilon > 0$ there exist
$P_{\epsilon} \in \mn(\oss)_+$ and $K_{\epsilon} \in \mn(\kj),$ such
that
$\epsilon 1 + H = P_{\epsilon} + K_{\epsilon}$ and it follows that $H$
is in the norm closure of $\mn(\oss)_+ + \mn(\kj)$.
 
For the converse it is sufficient to assume that $H=H^*$ is in the
closure of $\mn(\oss)_+ + \mn(\kj)_{\rm sa}$ and prove that $\epsilon 1 + H \in
\mn(\oss)_+ + \mn(\kj)$ for every $\epsilon > 0$.
Since $H$ is in the closure and $\mn(\kj)^* =\mn(\kj)$, for every
$\epsilon > 0$ we may choose $P_{\epsilon} \in \mn(\oss)_+$ and
$K_{\epsilon} \in \mn(\kj)_{\rm sa}$ such that $\|H- (P_{\epsilon} +
K_{\epsilon}) \| < \epsilon.$ This implies that $Q_{\epsilon} =
\epsilon 1 + H - P_{\epsilon} - K_{\epsilon} \in \mn(\oss)_+$, and
hence
$\epsilon 1 + H = (Q_{\epsilon} + P_{\epsilon}) + K_{\epsilon} \in
\mn(\oss)_+ + \mn(\kj)$.  This proves the second statement.

The third statement follows by combining the first and second statements.
\end{proof}

\subsection{Definitions: complete quotient maps and complete order injections}

\begin{definition} 
A linear map $\phi:\oss\rightarrow\ost$
of operator systems is a \emph{complete order isomorphism} if
\begin{enumerate}
\item $\phi$ is a linear isomorphism and
\item $\phi$ and $\phi^{-1}$ are completely positive.
\end{enumerate}
\end{definition}

\begin{definition} 
A linear map $\phi:\oss\rightarrow\ost$
of operator systems is a \emph{complete order injection}, or a \emph{coi map}, if
$\phi$ is a complete order isomorphism between $\oss$ and the operator system $\phi(\oss)$
with order unit $\phi(1_\oss)$.
\end{definition}

\begin{definition} 
A completely positive linear map $\phi:\oss\rightarrow\ost$
of operator systems is a \emph{complete quotient map} if 
the uniquely determined completely positive linear 
map $\dot{\phi}:\oss/\ker\phi\rightarrow\ost$ is a complete order isomorphism.
\end{definition}

A basic fact about complete quotient maps that we shall use repeatedly is:

\begin{proposition}\label{lifting positives} If $\phi:\oss\rightarrow\ost$ is a complete quotient map and if
$\ker\phi$ is completely order proximinal,
then $\phi^{(n)}$ maps $\mn(\oss)_+$ onto $\mn(\ost)_+$, for every $n\in\mathbb N$.
\end{proposition}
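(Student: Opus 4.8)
The plan is to unwind the definitions and then trace a positive element of $\mn(\ost)$ backwards through $\dot{\phi}$, lifting it to a positive element of $\mn(\oss)$ by means of complete order proximinality. Write $\kj=\ker\phi$. Applying the identity $\phi=\dot{\phi}\circ q$ entrywise, and using the canonical identification $(\oss/\kj)\otimes\mn\cong\mn(\oss/\kj)$, gives $\phi^{(n)}=\dot{\phi}^{(n)}\circ q^{(n)}$. Since $\phi$ is completely positive we automatically have $\phi^{(n)}(\mn(\oss)_+)\subseteq\mn(\ost)_+$, so only the reverse inclusion requires proof.

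First I would record that, because $\phi$ is a complete quotient map, $\dot{\phi}$ is a complete order isomorphism, and hence each $\dot{\phi}^{(n)}$ is a linear bijection of $\mn(\oss/\kj)$ onto $\mn(\ost)$ that carries the cone $C_n(\oss/\kj)$ \emph{onto} $\mn(\ost)_+$: one inclusion is the complete positivity of $\dot{\phi}$, the other is the complete positivity of $\dot{\phi}^{-1}$. Consequently, given any $y\in\mn(\ost)_+$, there exists $\dot{H}\in C_n(\oss/\kj)$ with $\dot{\phi}^{(n)}(\dot{H})=y$.

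Next I would invoke the hypothesis that $\kj$ is completely order proximinal, so that $C_n(\oss/\kj)=D_n(\oss/\kj)$. By the definition of $D_n(\oss/\kj)$ (equivalently, by Proposition \ref{proxchar}(1)) there is $K\in\mn(\kj)_{\rm sa}$ with $H+K\in\mn(\oss)_+$. Because $q^{(n)}$ annihilates $\mn(\kj)=\kj\otimes\mn$, we get $q^{(n)}(H+K)=q^{(n)}(H)=\dot{H}$, whence
\[
\phi^{(n)}(H+K)=\dot{\phi}^{(n)}(q^{(n)}(H+K))=\dot{\phi}^{(n)}(\dot{H})=y.
\]
Thus $y$ is the image under $\phi^{(n)}$ of the positive element $H+K\in\mn(\oss)_+$, which establishes $\mn(\ost)_+\subseteq\phi^{(n)}(\mn(\oss)_+)$ and finishes the proof.

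I do not anticipate a genuine obstacle here: essentially all the content is packaged in the two facts cited above --- the defining property of a complete quotient map (that $\dot{\phi}$ is a complete order isomorphism) and the description $\mathcal D_n(\oss/\kj)=\mn(\kj)+\mn(\oss)_+$ from Proposition \ref{proxchar}. The only place demanding any care is the bookkeeping of the canonical identifications $\mn(\oss/\kj)\cong(\oss\otimes\mn)/(\kj\otimes\mn)$ together with the vanishing of $q^{(n)}$ on $\mn(\kj)$, which is exactly what guarantees that a lift of $\dot{H}$ of the form $H+K$ with $K\in\mn(\kj)_{\rm sa}$ is genuinely carried by $\phi^{(n)}$ onto $\dot{\phi}^{(n)}(\dot{H})=y$.
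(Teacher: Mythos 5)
Your proof is correct. The paper records this proposition as a ``basic fact'' without giving a proof, and your argument --- factoring $\phi^{(n)}=\dot{\phi}^{(n)}\circ q^{(n)}$, using that $\dot{\phi}$ is a complete order isomorphism to pull a given $y\in\mn(\ost)_+$ back to an element of $C_n(\oss/\kj)$, and then invoking complete order proximinality ($C_n=D_n$, i.e.\ Proposition \ref{proxchar}(1)) to lift it to $H+K\in\mn(\oss)_+$ --- is exactly the intended unwinding of the definitions.
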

  
The next fact is also basic.

\begin{proposition}\label{quo lemma} If $\ost$ is an operator subsystem of $\oss$, and if $\kj$ is a kernel such that
$\kj\subset\ost\subseteq\oss$, then there is a complete order embedding $\ost/\kj\rightarrow\oss/\kj$.
\end{proposition}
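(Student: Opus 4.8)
The plan is to produce the complete order embedding $\iota:\ost/\kj\to\oss/\kj$ directly from the inclusion map. Let $j:\ost\hookrightarrow\oss$ be the inclusion, which is a ucp map, and let $q_\oss:\oss\to\oss/\kj$ be the canonical quotient map. Then $q_\oss\circ j:\ost\to\oss/\kj$ is completely positive and its kernel is exactly $\kj$ (since $\kj\subseteq\ost$ and $\kj=\ker q_\oss$). By the First Isomorphism Theorem (Theorem~\ref{1st iso thm}), there is a unique cp map $\iota:\ost/\kj\to\oss/\kj$ with $q_\oss\circ j=\iota\circ q_\ost$; concretely $\iota(\dot{X})=\dot{X}$ for $X\in\ost$, viewing the class on the left in $\ost/\kj$ and on the right in $\oss/\kj$. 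Note $\iota$ is injective as a linear map and unital, so it remains only to check that $\iota^{-1}:\iota(\ost/\kj)\to\ost/\kj$ is completely positive, i.e. that $\iota$ is a complete order injection.

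Unwinding the definitions, this amounts to the following: if $\dot{H}\in\mn(\ost/\kj)_{\rm sa}$ and $\dot{H}\in C_n(\oss/\kj)$ (positivity computed in the larger quotient), then $\dot{H}\in C_n(\ost/\kj)$ (positivity computed in the smaller quotient). So suppose that for every $\varepsilon>0$ there exist $K_\varepsilon\in\mn(\kj)_{\rm sa}$ with $\varepsilon 1+H+K_\varepsilon\in\mn(\oss)_+$, where $H\in\mn(\ost)_{\rm sa}$ is a lift. The point is that $\varepsilon 1+H+K_\varepsilon$ is an element of $\mn(\ost)$ (since $H,K_\varepsilon,1$ all lie in $\mn(\ost)$) which is positive \emph{in} $\mn(\oss)$; because $\ost$ is an operator subsystem of $\oss$, the positive cone of $\mn(\ost)$ is precisely $\mn(\ost)\cap\mn(\oss)_+$, so in fact $\varepsilon 1+H+K_\varepsilon\in\mn(\ost)_+$. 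This witnesses $\dot{H}\in C_n(\ost/\kj)$, and the same argument run with $n=1$ and with all $n$ simultaneously gives that $\iota$ is a complete order injection. Combined with the previous paragraph, $\iota$ is a complete order isomorphism onto its range, hence a complete order embedding.

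The main (and essentially only) subtlety is the step asserting $\mn(\ost)_+=\mn(\ost)\cap\mn(\oss)_+$ — that an element of $\mn(\ost)$ positive in the ambient operator system $\oss$ is already positive in $\ost$. This is exactly the defining property of an operator subsystem: the matricial order on $\ost$ is inherited from that of $\oss$, so there is nothing to prove beyond invoking this. Everything else — existence and uniqueness of $\iota$, injectivity, unitality — is an immediate application of Theorem~\ref{1st iso thm} together with the identification $\mn(\oss/\kj)\cong(\oss\otimes\mn)/(\kj\otimes\mn)$ recalled in \S\ref{S:review}. One should also double-check that $\kj$, being a kernel in $\oss$ with $\kj\subseteq\ost$, is a kernel in $\ost$; but this is clear since the composition $q_\oss\circ j$ exhibits it as such, so both quotient operator systems in the statement are well defined.
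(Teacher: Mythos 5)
Your proof is correct. The paper records Proposition \ref{quo lemma} without proof, as a basic fact, and your argument is exactly the natural justification: the map $\iota$ is produced by the First Isomorphism Theorem applied to $q\circ j$ (which also shows $\kj$ is a kernel in $\ost$), and since any lift of an element of $\iota^{(n)}(\mn(\ost/\kj))$ may be taken in $\mn(\ost)$ and the witnesses $K_\varepsilon$ lie in $\mn(\kj)\subseteq\mn(\ost)$, the element $\varepsilon 1+H+K_\varepsilon$ belongs to $\mn(\ost)$, where positivity in $\mn(\oss)$ coincides with positivity in $\mn(\ost)$ because the matricial cones of an operator subsystem are inherited; hence $C_n(\ost/\kj)=(\iota^{(n)})^{-1}(C_n(\oss/\kj))$ and $\iota$ is a complete order injection.
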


 \subsection{Tensor products of operator systems}
 
 \subsubsection{The minimal tensor product}
 
 The \emph{matricial state space} of an operator system $\oss$ is the set 
 \[
 {\rm S}_\infty(\oss)=\displaystyle\bigcup_{n\in\mathbb N} {\rm S}_n(\oss)\,,\;\mbox{ where }\;
 {\rm S}_n(\oss)=\{\mbox{all ucp maps }\oss\rightarrow\mn\}\,.
 \] 
 For every $p\in\mathbb N$, let 
 \[
 \begin{array}{rcl}
 \cminp(\oss,\ost)&=&\{x\in\mpee(\oss\otimes\ost)\,:\,(\phi\otimes\psi)^{(p)}(x)\in \mpee(\mk\otimes\mm)_+,
 \\ && \qquad\qquad\;\forall (\phi,\psi)\in 
  {\rm S}_k(\oss)\times {\rm S}_m(\ost),\;\forall\,(k,m)\in\mathbb N\times\mathbb N\}\,.
\end{array}
  \]
 The collection $\{\cminp(\oss,\ost)\}_{p\in\mathbb N}$ induces an operator system
 structure on $\oss\otimes\ost$ with order unit $1_\oss\otimes 1_\ost$. The operator system that arises from this matricial order structure
 is denoted by $\oss\omin\ost$ and is called
 the \emph{minimal tensor product} of $\oss$ and $\ost$.  
 
\subsubsection{The maximal tensor product}
Let $\dmaxp(\oss,\ost)\subset\mpee(\oss\otimes\ost)$ denote the subset of all elements of the form
 \[
\alpha(s\otimes t)\alpha^*\,,\;\mbox{where }\, s\in\mk(\oss)_+\,,\;t\in\mm (\ost)_+\,,
 \;\alpha:\mathbb C^k\otimes\mathbb C^m\rightarrow
 \mathbb C^p\;\mbox{is linear}\,.
 \]
 For every $p\in\mathbb N$, let
 \[
 \cmaxp(\oss,\ost)\;=\;\{h\in\mpee(\oss\otimes\ost)\,:\,\varepsilon1_{\mpee}+h\in\dmaxp(\oss,\ost)\;\mbox{ for all }\varepsilon>0\}\,.
 \]
 The collection $\{\cmaxp(\oss,\ost)\}_{p\in\mathbb N}$ induces an operator system
 structure on $\oss\otimes\ost$ with order unit $1_\oss\otimes 1_\ost$. The operator system that arises from this matricial order structure
 is denoted by $\oss\omax\ost$ and is called the \emph{maximal tensor product} of $\oss$ and $\ost$. 
  
 \begin{proposition}\label{max quo} If $\phi:\oss\rightarrow\ost$ is a complete quotient map, then $\phi\otimes{\rm id}_\osr:
 \oss\omax\osr\rightarrow\ost\omax\osr$ is a complete quotient map for every operator system $\osr$.
 \end{proposition}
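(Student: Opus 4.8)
The plan is to show that the induced map $\dot{\phi} \otimes {\rm id}_\osr : (\oss/\ker\phi) \omax \osr \to \ost \omax \osr$ is a complete order isomorphism, after first identifying the kernel of $\phi \otimes {\rm id}_\osr$ and the quotient it induces. Write $\kj = \ker\phi$. The first task is to identify $\ker(\phi\otimes{\rm id}_\osr)$ inside $\oss\otimes\osr$. Since $\phi\otimes{\rm id}_\osr$ is surjective (as $\phi$ is), and since $\kj\otimes\osr$ is plainly contained in the kernel, the natural guess is that the kernel is exactly $\kj\otimes\osr$; this is a purely linear-algebra fact about tensor products of vector spaces with a surjection, so that $(\oss\otimes\osr)/(\kj\otimes\osr) \cong (\oss/\kj)\otimes\osr$ as vector spaces, and the induced map from this quotient to $\ost\otimes\osr$ is the algebraic isomorphism $\dot\phi\otimes{\rm id}_\osr$. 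By the First Isomorphism Theorem (Theorem \ref{1st iso thm}) applied to $\phi\otimes{\rm id}_\osr : \oss\omax\osr \to \ost\omax\osr$, the map $\dot\phi\otimes{\rm id}_\osr : (\oss\omax\osr)/(\kj\otimes\osr) \to \ost\omax\osr$ is completely positive, and it is a vector-space isomorphism. So everything reduces to showing its inverse is completely positive, i.e. that it is a complete order isomorphism.

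Next I would unwind what it means for $\dot\phi\otimes{\rm id}_\osr$ to be a complete order isomorphism in terms of cones. Using Proposition \ref{proxchar}(1), the cone $\mathcal D_p$ of the quotient $(\oss\omax\osr)/(\kj\otimes\osr)$ at level $p$ is $\mpee(\kj\otimes\osr) + \cmaxp(\oss,\osr)$, and the quotient cone $C_p$ is obtained by the Archimedeanization (intersecting over $\varepsilon\dot1 + \cdot \in \mathcal D_p$). What must be proved is that $(\dot\phi\otimes{\rm id}_\osr)$ maps $C_p\big((\oss\omax\osr)/(\kj\otimes\osr)\big)$ \emph{onto} $\cmaxp(\ost,\osr)$; equivalently, lifting: given $h \in \cmaxp(\ost,\osr)$, and $\varepsilon>0$, one must produce a preimage in $\varepsilon 1 + \mpee(\kj\otimes\osr) + \cmaxp(\oss,\osr)$. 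The crucial structural input is the very concrete description of $\dmaxp$: its elements are $\alpha(s\otimes t)\alpha^*$ with $s\in\mk(\ost)_+$, $t\in\mm(\osr)_+$, $\alpha$ a scalar matrix. Since $\phi$ is a complete quotient map, the induced $\dot\phi$ is a complete order isomorphism $\oss/\kj \cong \ost$; I will \emph{not} assume $\kj$ is completely order proximinal here (the statement does not), so I only know that every $s\in\mk(\ost)_+$ is, for each $\varepsilon>0$, of the form $\dot\phi^{(k)}(s'_\varepsilon)$ for some $s'_\varepsilon$ with $\varepsilon 1 + s'_\varepsilon \in \mk(\oss)_+ + \mk(\kj)$ — that is, $s'_\varepsilon \in C_k(\oss/\kj)$ lifts to $\oss$ up to $\varepsilon$. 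The plan is: given a generator $\alpha(s\otimes t)\alpha^*$ of $\dmaxp(\ost,\osr)$, lift $s$ in this approximate sense to $\mk(\oss)$, keep $t$ as is, and form $\alpha(s'_\varepsilon \otimes t)\alpha^*$; its image under $\phi\otimes{\rm id}_\osr$ differs from $\alpha(s\otimes t)\alpha^*$ by something in $\mpee(\kj\otimes\osr)$, and $\alpha(s'_\varepsilon\otimes t)\alpha^*$ itself lies in $\dmaxp(\oss,\osr)$ up to a controlled $\varepsilon$-perturbation (here one uses that conjugating $(\varepsilon 1 + (\text{positive} + \text{kernel}))\otimes t$ by $\alpha$ and absorbing $\|\alpha\|^2\|t\|$ keeps the error $O(\varepsilon)$). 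Taking finite sums of generators and then intersecting over $\varepsilon$ (passing to $\cmaxp$) gives the onto statement, which is exactly the complete positivity of the inverse.

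The main obstacle I anticipate is the bookkeeping in this approximate-lifting step: $\dmaxp$ is generated by sums of terms $\alpha(s\otimes t)\alpha^*$, the lifts $s'_\varepsilon$ only live in $C_k$ (not $D_k$) unless proximinality is assumed, and one has to make sure that after summing finitely many generators, conjugating, and adding the kernel correction, the total deviation from $\dmaxp(\oss,\osr)$ is still $O(\varepsilon)$ uniformly — so that passing to the Archimedean closure $\cmaxp$ genuinely recovers a preimage of $h$. A clean way to organize this is: (i) first prove the proposition under the extra hypothesis that $h \in \dmaxp(\ost,\osr)$ is a single generator, getting a preimage in $\cmaxp(\oss,\osr) + \mpee(\kj\otimes\osr)$; (ii) extend to finite sums, hence all of $\dmaxp(\ost,\osr)$, by additivity of cones; (iii) finally, for $h\in\cmaxp(\ost,\osr)$, apply (ii) to each $\varepsilon 1 + h \in \dmaxp(\ost,\osr)$ and diagonalize the $\varepsilon$'s. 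One should double-check that a preimage of a positive element being in $\mathcal C_p\big((\oss\omax\osr)/(\kj\otimes\osr)\big)$ is precisely the condition for the quotient element to be positive — this is immediate from the definition of $C_p$ as $q_p\big(\mathcal C_p\big)$ — and that completeness (all $p$) comes for free since the whole argument is run at level $p$ throughout.
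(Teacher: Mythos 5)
Your proposal is correct, but it takes a genuinely different route from the paper. The paper factors through $(\oss/\kj)\omax\osr$: the hard-looking direction (mapping the max structure on $(\oss/\kj)\otimes\osr$ into the quotient structure on $(\oss\omax\osr)/\ker(\phi\otimes{\rm id}_\osr)$) is obtained for free from the universal property of $\omax$ applied to the jointly completely positive bilinear map $(x,y)\mapsto \tilde q(X\otimes y)$, and the remaining direction is proved by lifting a quotient positive, up to $\varepsilon$ and modulo the kernel, to a generator $\alpha(P\otimes Q)\alpha^*$ of $\dmaxp(\oss,\osr)$ and pushing it forward with $q\otimes{\rm id}_\osr$. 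You instead work directly with $\dot\phi\otimes{\rm id}_\osr$ into $\ost\omax\osr$: your easy direction (complete positivity of the induced map on the quotient) comes from functoriality of $\omax$ plus the First Isomorphism Theorem, and your hard direction is a hands-on lifting of $\dmaxp(\ost,\osr)$-generators, pulling $s\in\mk(\ost)_+$ back through $\phi$ only approximately (using that $(\dot\phi^{(k)})^{-1}(s)\in C_k(\oss/\kj)$, so a lift is positive modulo $\mk(\kj)$ up to $\delta 1$) and absorbing the error $\delta\,\alpha(1\otimes t)\alpha^*\leq \delta\|\alpha\|^2\|t\|\,1$ before passing to the Archimedean closure. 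In effect the two proofs swap which direction is ``for free'': the paper's route is shorter and pushes all the $\varepsilon$-bookkeeping into the quoted universal property of the maximal tensor product, while yours is more elementary and self-contained (only the explicit cone descriptions and Proposition \ref{proxchar} are used) and makes visible exactly where the complete-quotient hypothesis on $\phi$ enters, at the cost of the uniform error control you correctly flag; your diagonalization of the two $\varepsilon$'s at the end does close that gap. One small point to make explicit in a write-up: the complete positivity of $\phi\otimes{\rm id}_\osr$ on $\omax$ (needed before invoking Theorem \ref{1st iso thm}) is the same functoriality of the maximal tensor product that the paper invokes for $\dot\phi\otimes{\rm id}_\osr$, and follows at once from the generator description of $\dmaxp$ together with the Archimedean definition of $\cmaxp$.
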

 
 \begin{proof} Let $\kj=\ker\phi$. Because $\dot{\phi}:\oss/\kj\rightarrow\ost$ is a complete order isomorphism, 
 so is $\dot{\phi}\otimes{\rm id}_\osr:(\oss/\kj)\omax\osr\rightarrow\ost\omax\osr$. Therefore, $\phi\otimes{\rm id}_\osr$
 is a complete quotient map if $(\oss\omax\osr)/\ker(\phi\otimes{\rm id}_\osr)$ and $(\oss/\kj)\omax\osr$ are completely order isomorphic.
 
 Let $\tilde q:\oss\omax\osr\rightarrow(\oss\omax\osr)/\ker(\phi\otimes{\rm id}_\osr)$
 be the canonical surjective cp map and define a bilinear map
 \[
 \sigma: (\oss/\kj)\times\osr\rightarrow (\oss\omax\osr)/\ker(\phi\otimes{\rm id}_\osr)
 \]
 by $\sigma(x,y)\;=\tilde q(X\otimes y)$, where $X\in\oss$ is any element for which $q(X)=x$. Note that $\sigma$ is well defined.
 Furthermore, $\sigma$ is jointly completely positive and ``unital,'' and so by the universal property of the max tensor product 
 \cite[Theorem 5.8]{kavruk--paulsen--todorov--tomforde2009}, there is a ucp extension 
 $\sigma:(\oss/\kj)\omax\osr\rightarrow (\oss\omax\osr)/\ker(\phi\otimes{\rm id}_\osr)$. We
 claim that $\sigma$ is a complete order isomorphism.
 
 By linear algebra, the restriction of $\sigma$ to the algebraic tensor product is a linear isomorphism between $(\oss/\kj)\otimes\osr$
 and $(\oss\otimes\osr)/\ker(\phi\otimes{\rm id}_\osr)$, and therefore $\sigma$ is a linear isomorphism
 of $(\oss/\kj)\omax\osr$ and $(\oss\omax\osr)/\ker(\phi\otimes{\rm id}_\osr)$.
 
To show that $\sigma^{-1}$ is completely positive, select a positive element $b$ in the $p\times p$ matrices over
$(\oss\omax\osr)/\ker(\phi\otimes{\rm id}_\osr)$. Hence, for every $\varepsilon>0$ there is are $P\in\mk(\oss)_+$, $Q\in \mm(\osr)_+$
and $\alpha:\mathbb C^k\otimes\mathbb C^m\rightarrow\mathbb C^p$ linear such that $\varepsilon\dot{1}+b=\alpha(q(P)\otimes Q)\alpha^*$.
Hence, $\varepsilon\sigma^{-1}{}^{(p)}(\dot{1})+ \sigma^{-1}{}^{(p)}(b)\in \dmaxp(\oss/\kj,\osr)$ for all $\varepsilon>0$. Because $\sigma^{-1}$
is unital, this implies that $\sigma^{-1}{}^{(p)}(b)\in\mpee( (\oss/\kj)\omax\osr)_+$.
\end{proof}
 
 \begin{corollary}\label{max quo cor} If $\phi:\oss\rightarrow\ost$ is a complete quotient map, then $\phi\otimes\phi:
 \oss\omax\oss\rightarrow\ost\omax\ost$ is a complete quotient map.
 \end{corollary}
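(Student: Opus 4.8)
The plan is to factor $\phi\otimes\phi$ through $\ost\omax\oss$ and to apply Proposition~\ref{max quo} twice. Set $\kj=\ker\phi$ and observe the identity
\[
\phi\otimes\phi\;=\;\bigl({\rm id}_\ost\otimes\phi\bigr)\circ\bigl(\phi\otimes{\rm id}_\oss\bigr)
\]
of maps $\oss\omax\oss\to\ost\omax\oss\to\ost\omax\ost$. By Proposition~\ref{max quo} with $\osr=\oss$, the first factor $\phi\otimes{\rm id}_\oss$ is a complete quotient map. The second factor ${\rm id}_\ost\otimes\phi$ is conjugate, via the canonical ``flip'' complete order isomorphisms $\ost\omax\oss\cong\oss\omax\ost$ and $\ost\omax\ost\cong\ost\omax\ost$ afforded by the symmetry of the maximal tensor product, to $\phi\otimes{\rm id}_\ost:\oss\omax\ost\to\ost\omax\ost$, which is a complete quotient map by Proposition~\ref{max quo} with $\osr=\ost$. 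Since pre- and post-composition of a complete quotient map with complete order isomorphisms again yields a complete quotient map (a complete order isomorphism transports the quotient operator system structure across, so this follows directly from the definitions), ${\rm id}_\ost\otimes\phi$ is a complete quotient map, and hence $\phi\otimes\phi$ is a composition of two complete quotient maps.

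It therefore remains to establish the general fact that the composition $\psi\circ\rho$ of complete quotient maps $\rho:\oss\to\ost$ and $\psi:\ost\to\osu$ is again a complete quotient map. Complete quotient maps are surjective, so $\psi\circ\rho$ is completely positive and surjective and its induced map $\theta:\oss/\ker(\psi\circ\rho)\to\osu$ (Theorem~\ref{1st iso thm}) is completely positive and bijective; the point is to see that $\theta^{-1}$ is completely positive. Given $n\in\mathbb N$ and $w\in\mn(\osu)_+$ with self-adjoint lift $X\in\mn(\oss)$ satisfying $(\psi\circ\rho)^{(n)}(X)=w$, one must show that the class $q^{(n)}(X)$ lies in $C_n\bigl(\oss/\ker(\psi\circ\rho)\bigr)$. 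Fix $\varepsilon>0$. First use that $(\dot\psi)^{-1}$ is completely positive, applied to $w=\psi^{(n)}(\rho^{(n)}(X))$, to see that the class of $\rho^{(n)}(X)$ lies in $C_n(\ost/\ker\psi)$, and hence obtain $L\in\mn(\ker\psi)_{\rm sa}$ with $\tfrac{\varepsilon}{2}1+\rho^{(n)}(X)+L\in\mn(\ost)_+$. Choose a self-adjoint lift $M\in\mn(\oss)$ of $L$ through $\rho^{(n)}$ (possible since $\rho^{(n)}$ is surjective and $*$-preserving); then $(\psi\circ\rho)^{(n)}(M)=\psi^{(n)}(L)=0$, so $M\in\mn\bigl(\ker(\psi\circ\rho)\bigr)_{\rm sa}$. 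Next use that $(\dot\rho)^{-1}$ is completely positive, applied to $\tfrac{\varepsilon}{2}1+\rho^{(n)}(X)+L$: its image is the class of $\tfrac{\varepsilon}{2}1+X+M$ in $\mn(\oss/\ker\rho)$, which lies in $C_n(\oss/\ker\rho)$, so there is $K'\in\mn(\ker\rho)_{\rm sa}$ with $\varepsilon 1+X+M+K'\in\mn(\oss)_+$. Since $\ker\rho\subseteq\ker(\psi\circ\rho)$, the element $K=M+K'$ lies in $\mn\bigl(\ker(\psi\circ\rho)\bigr)_{\rm sa}$ and witnesses $q^{(n)}(X)\in C_n\bigl(\oss/\ker(\psi\circ\rho)\bigr)$. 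This proves the composition lemma, and with it the corollary.

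The only step that is not purely formal is the composition lemma, and within it the bookkeeping of which operator system each $\varepsilon$-approximant inhabits, together with the choice of a self-adjoint lift $M$ of $L$ that lands in $\ker(\psi\circ\rho)$; I expect this (though minor) to be the main obstacle. Alternatively one could bypass the lemma by imitating the proof of Proposition~\ref{max quo} directly, constructing a complete order isomorphism $(\oss/\kj)\omax(\oss/\kj)\cong(\oss\omax\oss)/\ker(\phi\otimes\phi)$ and composing with the complete order isomorphism $\dot\phi\otimes\dot\phi:(\oss/\kj)\omax(\oss/\kj)\to\ost\omax\ost$, but verifying joint complete positivity of the relevant bilinear map involves the same $\varepsilon$-approximations.
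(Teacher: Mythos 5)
Your proof is correct and follows essentially the same route as the paper: factor $\phi\otimes\phi$ through an intermediate maximal tensor product, apply Proposition~\ref{max quo} to each one-sided factor (the paper leaves the flip symmetry implicit), and invoke the fact that a composition of complete quotient maps is a complete quotient map. The paper simply asserts that composition fact without proof, so your $\varepsilon$-argument for it is a harmless (and correct) addition rather than a departure.
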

 
 \begin{proof} Because $\phi\otimes\phi=(\phi\otimes{\rm id}_\ost) \circ ({\rm id}_\ost\otimes\phi)$
 and a composition of complete quotient maps is a complete quotient map, the result follows.
 \end{proof}
 
 \subsubsection{The commuting tensor product}
 
 Let ${\rm cp}(\oss,\ost)$ denote the set of all pairs $(\phi,\psi)$ of completely positive linear maps that map $\oss$ and $\ost$, respectively, into
 a common $\bh$ and such that $\phi(x)\psi(y)=\psi(y)\phi(x)$, for all $(x,y)\in\oss\times\ost$. Such a pair determines a bilinear map
 $\phi\cdot\psi:\oss\times\ost\rightarrow\bh$ via $\phi\cdot\psi(x,y)=\phi(x)\psi(y)$, and so there exists a unique linear map, denoted again by $\phi\cdot\psi$,
 of $\oss\otimes\ost$ into $\bh$ and for which $\phi\cdot\psi(x\otimes y)=\phi(x)\psi(y)$, for all elementary tensors $x\otimes y\in\oss\otimes\ost$.
 Let
 \[
 \begin{array}{rcl}
 \ccomp(\oss,\ost) & =& \{x\in\mpee(\oss\otimes\ost)\,:\,(\phi\cdot\psi)^{(p)}(x)\in \mpee(\bh)_+, \\
&&\qquad\qquad\qquad\qquad\;\forall \,(\phi,\psi)\in {\rm cp}(\oss,\ost) \}\,.
\end{array}
\]
The collection $\{\ccomp(\oss,\ost)\}_{p\in\mathbb N}$ induces an operator system
 structure on $\oss\otimes\ost$ with order unit $1_\oss\otimes 1_\ost$. The operator system that arises from this matricial order structure
 is denoted by $\oss\oc\ost$ and is called the \emph{commuting tensor product} of $\oss$ and $\ost$.

\subsubsection{Properties of tensor products}
 
 Evidently,
 \[
 \cmaxp(\oss,\ost)\subseteq \ccomp(\oss,\ost)\subseteq\cminp(\oss,\ost)\,,
 \]
 and so the maps 
$\oss\omax\ost\rightarrow\oss\oc\ost$ and $\oss\oc\ost\rightarrow\oss\omin\ost$ arising from the
identity map of $\oss\otimes\ost$ are ucp; we denote
 this by
 \[
 \oss\omax\ost  \subseteq \oss\oc\ost\subseteq  \oss\omin\ost\,.
 \]
 
  \begin{definition} Assume that $\oss$ is an operator system.
  \begin{enumerate}
 \item $\oss$ is said to be \emph{nuclear} or  \emph{{\rm (min,max)}-nuclear}
if $\oss\omin\ost=\oss\omax\ost$ for every 
 operator system $\ost$.
 \item $\oss$ is \emph{{\rm (min,c)}-nuclear} if $\oss\omin\ost=\oss\oc\ost$ for every 
 operator system $\ost$.
\end{enumerate}
 \end{definition}
 
 All unital C$^*$-algebras that are nuclear in the sense of C$^*$-algebraic tensor products are also (min,max)-nuclear
 \cite[Proposition 5.15]{kavruk--paulsen--todorov--tomforde2009}.
 Although finite-dimensional C$^*$-algebras are, therefore,  nuclear, it is not the case that every finite-dimesional operator 
 system is nuclear \cite[Theorem 5.18]{kavruk--paulsen--todorov--tomforde2009}.
 
 By definition, an operator system $\oss$ is endowed with a specific matricial ordering in which the positive cone in the vector space $\mpee(\oss)$
 of $p\times p$ matrices is denoted by $\mpee(\oss)_+$. Expressed as tensor products, we have 
 \[
\mn( \oss)_+\,=\,(\oss\omax\mn)_+\,=\,(\oss\oc\mn)_+\,=\,(\oss\omin\mn)_+\,.
\]

\subsection{Duals of finite-dimensional operator systems}

For any finite-dimensional operator system $\oss$, let $\oss^d$ denote the dual of $\oss$. For every $n\in\mathbb N$, there
are the following natural identifications:
\[
\mn(\oss^d)\;\cong \mathcal L(\oss^{dd},\mn)\;=\;\mathcal L(\oss,\mn)\;\cong\;\mn(\oss)^d\,.
\]
In the first of these identifications, and using $\leftrightarrow$ to denote ``is identified with,''
\[
G=[g_{ij}]_{i,j}\in \mn(\oss^d)\;\longleftrightarrow\;\hat G\in\mathcal L(\oss,\mn),\;\hat G(s)=[g_{ij}(s)]_{i,j}\,.
\]
In the second of the identifications,
\[
\hat G\in\mathcal L(\oss,\mn)\; \longleftrightarrow\; \varphi_G\in\mn(\oss)^d,\;\varphi_G\left([s_{ij}]_{i,j}\right)\,=\,\sum_{i,j}g_{ij}(s_{ij})\,.
\]
A linear functional on $\mn(\oss)$ of the form $\varphi_G$ satisfies $\varphi_G(x)\geq0$, for all $x\in\mn(\oss)_+$, if and only if $\hat G:\oss\rightarrow\mn$
is completely positive. Hence, $\mn(\oss^d)_+$ is defined to be:
\[
\begin{array}{rcl}
\mn(\oss^d)_+ &=& \{ G\in \mn(\oss^d)\,:\,\hat G:\oss\rightarrow \mn \;\mbox{is completely positive}\} \\
                             &=& \{G\in \mn(\oss^d)\,:\,\varphi_G(x)\geq0, \;\forall\,x\in\mn(\oss)_+\}\,.
\end{array}
\]
One can define these cones for any operator system $\oss$ and they endow $\oss^d$ with a matricial ordering. However, because $\oss$
has finite dimension, this matricial ordering on $\oss^d$ gives rise to an operator system structure on $\oss^d$ in which an Archimedean
order unit is given by $e=\varphi$, for some faithful state $\varphi$ of $\oss$ \cite[\S4]{choi--effros1977}.  

For every linear map $\phi:\oss\rightarrow\ost$, let $\phi^d$ denote the adjoint of $\phi$ as a linear map $\ost^d\rightarrow\oss^d$.
That is, $\phi^d(\psi)[s]=\psi(\phi(s))$, for all $\psi\in\ost^d$, $s\in\oss$.                            

\begin{proposition}\label{q-coi} The following statements are equivalent for a linear map $\phi:\oss\rightarrow\ost$ of
finite-dimensional operator systems:
\begin{enumerate}
\item\label{q-coi-1} $\phi$ is a complete quotient map;
\item\label{q-coi-2} $\phi^d$ is a complete order injection.
\end{enumerate}
\end{proposition}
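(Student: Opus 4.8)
The plan is to characterize both conditions in terms of how $\phi^{(n)}$ and $(\phi^d)^{(n)}$ interact with the positive cones, using the duality identifications $\mn(\oss^d)_+ \cong \{G : \hat G : \oss \to \mn \text{ is completely positive}\}$ and the fact that for finite-dimensional operator systems bipolar-type arguments let one recover cones from their dual cones. First I would unwind what it means for $\phi^d : \ost^d \to \oss^d$ to be a complete order injection: it means that $(\phi^d)^{(n)}$ is injective for every $n$ (equivalently $\phi^d$ is injective, equivalently $\phi$ has dense range, which for finite dimensions means $\phi$ is surjective) and that an element $G \in \mn(\oss^d)$ lies in $\mn(\oss^d)_+$ precisely when it lies in the image of $\mn(\ost^d)_+$ under $(\phi^d)^{(n)}$ plus a correction — more precisely, $(\phi^d)^{(n)}$ is a complete order injection iff $((\phi^d)^{(n)})^{-1}$ carries $(\phi^d)^{(n)}(\mn(\ost^d)) \cap \mn(\oss^d)_+$ back into $\mn(\ost^d)_+$.

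The key translation step is the following dictionary. Using the adjoint relation $\phi^d(\psi) = \psi \circ \phi$, a functional $\psi \in \ost^d$ (or matrix of functionals $G \in \mn(\ost^d)$) satisfies $\phi^d(\psi) \in \oss^d$ positive iff $\psi \circ \phi$ is completely positive on $\oss$; and since $\ker \phi \subseteq \ker(\psi\circ\phi)$ automatically, this is the same as asking that the induced functional on $\oss/\ker\phi$ (under the First Isomorphism Theorem, Theorem \ref{1st iso thm}) be completely positive. So I would pass through the quotient: replace $\phi$ by $\dot\phi : \oss/\ker\phi \to \ost$ and observe $\phi^d = (\dot\phi)^d \circ q^d$, where $q^d$ identifies $(\oss/\ker\phi)^d$ with the annihilator of $\ker\phi$ inside $\oss^d$ and is always a complete order injection. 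Thus $\phi^d$ is a complete order injection iff $(\dot\phi)^d$ is, and meanwhile $\phi$ is a complete quotient map iff $\dot\phi$ is a complete order isomorphism. This reduces the whole statement to: \emph{a bijective ucp map $\dot\phi$ between finite-dimensional operator systems is a complete order isomorphism iff its adjoint $(\dot\phi)^d$ is a complete order injection (equivalently, since both are then bijective, a complete order isomorphism)}.

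For that reduced statement the argument is the standard duality computation: $(\dot\phi)^d$ is completely positive iff $\dot\phi$ is completely positive (both are, by hypothesis), and $(\dot\phi)^{-1}$ is completely positive iff $((\dot\phi)^{-1})^d = ((\dot\phi)^d)^{-1}$ is completely positive; the latter says exactly that $(\dot\phi)^d$ is a complete order isomorphism onto its image with the correct order unit, i.e. a complete order injection. One has to be a little careful that the order unit of $(\dot\phi)^d(\ost^d)$ agrees with the restricted order unit — i.e. that $(\dot\phi)^d$ maps the chosen Archimedean order unit $e_{\ost^d}$ to $e_{\oss^d}$ — but this can be arranged by choosing compatible faithful states, or sidestepped by noting a complete order injection only needs to be a complete order isomorphism onto $\phi(\oss^d)$ \emph{with order unit $\phi(1)$}, which is precisely what the dual-of-ucp setup gives.

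\textbf{Main obstacle.} The routine-looking parts (completely positive $\Leftrightarrow$ dual completely positive; matrix-level identifications) are genuinely routine given the duality setup already recorded in the excerpt. The one place demanding care is the surjectivity/injectivity bookkeeping when $\phi$ is \emph{not} assumed bijective: I must make sure that ``$\phi$ is a complete quotient map'' — which is a statement about $\oss/\ker\phi$, not $\oss$ — matches ``$\phi^d$ is a complete order injection'' on the nose, which is why factoring through $q^d$ and invoking that $q^d : (\oss/\ker\phi)^d \to \oss^d$ is always a complete order embedding onto the annihilator of $\ker\phi$ is the crucial structural lemma. Verifying that last claim — that the dual of the quotient map is a complete order injection with image the annihilator — is the technical heart, and I would prove it directly from the definitions of $C_n(\oss/\kj)$ and of the dual cones, or cite it if available in the referenced operator-system literature.
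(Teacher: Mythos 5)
Your proposal is correct, but it takes a different route than the paper. The paper works with $\phi$ itself throughout: for (1)$\Rightarrow$(2) it lifts each $y\in\mpee(\ost)_+$ to some $x\in\mpee(\oss)_+$ with $\phi^{(p)}(x)=y$ (via Proposition \ref{lifting positives}) and composes with $\hat G$; for (2)$\Rightarrow$(1) it computes the dual cone of $\phi^{(n)}\bigl(\mn(\oss)_+\bigr)$ inside $\mn(\ost)$, identifies it with $\bigl[\mn(\ost)_+\bigr]^{\#}$ using the coi hypothesis, and applies the bidual theorem to conclude $\phi^{(n)}\bigl(\mn(\oss)_+\bigr)=\mn(\ost)_+$, hence that $\dot\phi$ is a complete order isomorphism. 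You instead factor through the quotient, isolate as a structural lemma that $q^d$ is a complete order injection of $(\oss/\kj)^d$ onto the annihilator of $\kj$, and reduce to the bijective case, which you settle by the principle ``cp $\Leftrightarrow$ dual cp'' via the canonical identification $\ose^{dd}\cong\ose$; that principle is the same finite-dimensional bipolar/bidual fact the paper invokes, just packaged abstractly rather than computed for the specific cone. Your decomposition buys a reusable lemma and, incidentally, sidesteps the paper's appeal to exact positive lifts (Proposition \ref{lifting positives} formally carries a proximinality hypothesis that is not checked there; your $q^d$ lemma is proved by the $\varepsilon$-argument straight from the definition of $C_n(\oss/\kj)$ together with closedness of the matrix cones in the target, so no proximinality is needed), at the cost of extra bookkeeping in the factorization and the double-dual identification. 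Two small points of hygiene: the factorization should read $\phi^d=q^d\circ(\dot\phi)^d$ (you wrote the composition in the wrong order, though your description of the maps makes the intent clear), and you should state explicitly that the identification $\ose^{dd}\cong\ose$ is a \emph{complete order} isomorphism for finite-dimensional operator systems, since that is exactly where the bidual theorem enters your argument.
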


\begin{proof} Assume that $\phi$ is a complete quotient map and fix $n\in\mathbb N$. 
Recall that $\phi^d$ is completely positive 
since, for every $G\in \mn(\ost^d)$,
$\phi^d{}^{(n)}(G)$ is identified with $\hat G\circ\phi$, a composition of
completely positive linear maps; thus, $\phi^d{}^{(n)}(G)\in\mn(\oss^d)_+$. 

To show that $\phi^d$ is a complete order injection, it is sufficient to show that if 
$\phi^d{}^{(n)}(G)\in\mn(\oss^d)_+$, for some $G\in\mn(\ost^d)$, then necessarily $G\in\mn(\ost^d)_+$.
Therefore, it is necessary to show that $\hat G$ is completely positive. Let $p\in\mathbb N$ and
$y\in\mpee(\ost)_+$. By hypothesis, $\phi$ is a complete quotient map; thus, there is a $x\in\mpee(\oss)_+$ such
that $\phi^{(p)}(x)=y$ (Proposition \ref{lifting positives}). Thus, $\hat G^{(p)}(y)=\hat G^{(p)}\circ\phi^{(p)}(x)\in
\mpee(\mn)_+$, which shows that $G\in\mn(\ost^d)_+$. This proves that $\phi^d$ is a complete order injection.

Conversely, assume that $\phi^d$ is a complete order injection.  Let $\ose$ denote an arbitrary operator system.
By way of the identification of $\mn(\ose^d)_+$
with the linear functionals $\varphi$ on $\mn(\ose)$ for which $\varphi(x)\geq0$ for all $x\in\mn(\ose)_+$, we identify
$\mn(\ose^d)_+$ with the dual cone $\left[ \mn(\ose)_+\right]^{\#}$ of the cone $\mn(\ose)_+$:
\[
\begin{array}{rcl}
\mn(\ose^d)_+&=&\left[ \mn(\ose)_+\right]^{\#}\\  
                       &=&\{\psi\in\mn(\ose)^d\,:\,\psi(x)\geq0,\;\forall\,x\in\mn(\ose)_+\}\,.
\end{array}
\]
By hypothesis, $\psi\in\mn(\ost^d)_+$ if and only if $\phi^d{}^{(n)}(\psi)\in \mn(\oss^d)_+$. The dual cone of the 
cone $\phi^d{}^{(n)}\left(\mn(\oss)_+\right)$ in $\mn(\ost)$ is given by
\[
\begin{array}{rcl}
\left[\phi^d{}^{(n)}\left(\mn(\oss)_+\right)\right]^{\#}
&=&\{\psi\in\mn(\ost)^d\,:\,\psi(y)\geq0,;\forall\,y\in \phi^d{}^{(n)}\left(\mn(\oss)_+\right) \} \\
&=&\{\psi\in\mn(\ost)^d\,:\,\psi\circ \phi^{(n)}(x)\geq0,\;\forall\,x\in \mn(\oss)_+\} \\
&=&\{\psi\in\mn(\ost)^d\,:\,  \phi^d{}^{(n)}(\psi) \in \mn(\oss^d)_+\} \\
&=&\left[\mn(\ost)_+\right]^{\#}\quad\mbox{\rm (as $\phi$ is coi)}\,.
\end{array}
\]
Hence, by the Bidual Theorem,
\[
\phi^d{}^{(n)}\left(\mn(\oss)_+\right)\,=\,\left[\phi^d{}^{(n)}\left(\mn(\oss)_+\right)\right]^{\#\#}
\,=\,\left[\mn(\ost)_+\right]^{\#\#}\,=\,\mn(\ost)_+\,,
\]
which implies that $\dot{\phi}$ is a complete order isomorphism.
\end{proof}

\begin{remark} The use of finite-dimensional operator systems in Proposition \ref{q-coi}
is not essential, as the same arguments can be applied to the case of arbitrary $\oss$ and
$\ost$. What is lost is the fact that the duals $\oss^d$ and $\ost^d$ are operator systems. However,
duals of operator systems are matricially ordered spaces and one can speak of completely positive maps
between such spaces. Thus, Proposition \ref{q-coi} holds in the category whose objects are
matricially normed spaces
and whose morphisms are completely positive linear maps.
\end{remark}

\begin{proposition}\label{fd dual tensor coi} If $\oss$ and $\ost$ are finite-dimensional operator systems, then the operator systems
$\oss^d\omax\ost^d$ and $\left(\oss\omin\ost\right)^d$ are completely order isomorphic.
\end{proposition}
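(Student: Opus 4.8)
The plan is to show that the canonical linear isomorphism
\[
\gamma:\ \oss^d\otimes\ost^d\ \longrightarrow\ (\oss\otimes\ost)^d,\qquad \gamma(g\otimes h)(s\otimes t)=g(s)h(t),
\]
which exists because $\oss$ and $\ost$ are finite dimensional, is a unital complete order isomorphism of $\oss^d\omax\ost^d$ onto $(\oss\omin\ost)^d$. That $\gamma$ is a linear bijection is immediate, since on the algebraic tensor product it is the standard identification and $\dim(\oss^d\otimes\ost^d)=\dim\oss\cdot\dim\ost=\dim(\oss\otimes\ost)^d$; and $\gamma$ is unital once one takes the product state $1_{\oss^d}\otimes 1_{\ost^d}$ (which is faithful) as the Archimedean order unit of $(\oss\omin\ost)^d$.

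Next, one checks that $\gamma$ is completely positive. By the universal property of the maximal tensor product \cite[Theorem 5.8]{kavruk--paulsen--todorov--tomforde2009} it is enough to verify that the underlying bilinear map $\oss^d\times\ost^d\to(\oss\omin\ost)^d$ is unital and jointly completely positive. Unitality is clear, and joint complete positivity reduces to the following: if $G=[g_{ij}]\in\mk(\oss^d)_+$ and $H=[h_{rs}]\in\mm(\ost^d)_+$, then the matrix $\big[g_{ij}(\cdot)\,h_{rs}(\cdot)\big]_{(i,r),(j,s)}$ belongs to $\mathcal M_{km}\big((\oss\omin\ost)^d\big)_+$. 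Under the identification $\mathcal M_{km}\big((\oss\omin\ost)^d\big)\cong\mathcal L(\oss\omin\ost,\mathcal M_{km})$ this matrix corresponds to $\hat G\omin\hat H:\oss\omin\ost\to\mk\omin\mm=\mathcal M_{km}$, where $\hat G:\oss\to\mk$ and $\hat H:\ost\to\mm$ are the completely positive maps determined by $G$ and $H$; since the minimal tensor product is functorial for completely positive maps, $\hat G\omin\hat H$ is completely positive, and we are done.

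The heart of the proof is to show that $\gamma^{-1}$ is completely positive, which I would carry out at the level of cones. Identify $\mpee\big((\oss\omin\ost)^d\big)_+$ with the dual cone $\big[\cminp(\oss,\ost)\big]^\#$ in $\mpee(\oss\omin\ost)^d$, and recall $\mpee(\oss^d\omax\ost^d)_+=\cmaxp(\oss^d,\ost^d)$. In finite dimensions the positive cone of an operator system is closed; hence $\cmaxp(\oss^d,\ost^d)$ is closed, and because it is Archimedean and contains $\dmaxp(\oss^d,\ost^d)$ one has $\cmaxp(\oss^d,\ost^d)=\overline{\dmaxp(\oss^d,\ost^d)}$, while $\cminp(\oss,\ost)$ is closed as well. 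Using the canonical identifications $\oss^{dd}=\oss$ and $(\oss^d\otimes\ost^d)^d=\oss\otimes\ost$ together with the bipolar theorem, complete positivity of $\gamma^{-1}$ — which is the inclusion $\big[\cminp(\oss,\ost)\big]^\#\subseteq\cmaxp(\oss^d,\ost^d)$ — is equivalent to
\[
\big[\dmaxp(\oss^d,\ost^d)\big]^\#\ \subseteq\ \cminp(\oss,\ost).
\]
To prove this last inclusion, suppose $u\in\mpee(\oss\otimes\ost)$ pairs nonnegatively with every element of $\dmaxp(\oss^d,\ost^d)$, and fix ucp maps $\phi:\oss\to\mk$ and $\psi:\ost\to\mm$; I must show $(\phi\otimes\psi)^{(p)}(u)\geq 0$, equivalently that it pairs nonnegatively with every positive $R\in\mpee(\mk\otimes\mm)$. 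Using the trace identifications $\mk^d\cong\mk$, $\mm^d\cong\mm$ and the completely positive adjoints $\phi^d:\mk\to\oss^d$, $\psi^d:\mm\to\ost^d$ (adjoints of completely positive maps are completely positive), one has $\langle(\phi\otimes\psi)^{(p)}(u),R\rangle=\langle u,(\mathrm{id}_{\mpee}\otimes\phi^d\otimes\psi^d)(R)\rangle$. Writing $R$ as a finite sum of rank-one positives, each summand is carried by $\mathrm{id}_{\mpee}\otimes\phi^d\otimes\psi^d$ to an element of the form $\alpha(G^\phi\otimes G^\psi)\alpha^*$ with $G^\phi\in\mk(\oss^d)_+$, $G^\psi\in\mm(\ost^d)_+$ the positive matrices satisfying $\widehat{G^\phi}=\phi$, $\widehat{G^\psi}=\psi$; since $\dmaxp(\oss^d,\ost^d)$ is closed under addition (via the absorption $\mathrm{diag}(G^\phi\otimes G^\psi,\dots,G^\phi\otimes G^\psi)\cong(I_N\otimes G^\phi)\otimes G^\psi$), the whole image $(\mathrm{id}_{\mpee}\otimes\phi^d\otimes\psi^d)(R)$ lies in $\dmaxp(\oss^d,\ost^d)$. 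Therefore $\langle(\phi\otimes\psi)^{(p)}(u),R\rangle\geq 0$, hence $(\phi\otimes\psi)^{(p)}(u)\geq 0$, and as $\phi$ and $\psi$ were arbitrary, $u\in\cminp(\oss,\ost)$.

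I expect the main obstacle to lie in the third step: keeping the biduality and dual-cone bookkeeping consistent — in particular the identity $\cmaxp=\overline{\dmaxp}$ in finite dimensions and the compatibility of every canonical identification with $\gamma$ — and verifying in detail that $\mathrm{id}\otimes\phi^d\otimes\psi^d$ maps positive matrices into $\dmaxp(\oss^d,\ost^d)$, where the one point genuinely needing care is that $\dmaxp$ is closed under the sums that arise from decomposing $R$.
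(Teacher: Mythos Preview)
Your proposal is correct and follows the same underlying duality idea as the paper, but the paper organises the second half more economically. For the forward direction (complete positivity of $\gamma$) the two arguments are essentially identical: both reduce to showing that $\hat G\otimes\hat H:\oss\omin\ost\to\mk\otimes\mm$ is completely positive whenever $G\in\mk(\oss^d)_+$ and $H\in\mm(\ost^d)_+$, which is immediate from functoriality of $\omin$ (the paper phrases this via matricial convex combinations of matricial states, but this is the same fact).

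For the reverse direction the paper avoids all of the dual-cone and $\dmaxp$-versus-$\cmaxp$ bookkeeping you set up. Instead of proving $[\dmaxp(\oss^d,\ost^d)]^{\#}\subseteq\cminp(\oss,\ost)$ by pairing against an arbitrary positive $R$, the paper shows directly that the canonical embedding $\iota:(\oss^d\omax\ost^d)^d\to\oss\omin\ost$ is completely positive: if $G\in\mn((\oss^d\omax\ost^d)^d)_+$, then $\hat G:\oss^d\omax\ost^d\to\mn$ is completely positive, and for ucp $\phi,\psi$ one simply observes the identity
\[
\hat G^{(km)}(\phi\otimes\psi)=(\phi\otimes\psi)^{(n)}(G),
\]
which is positive because $\phi\otimes\psi\in\mathcal M_{km}(\oss^d\omax\ost^d)_+$. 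Dualising $\iota$ then gives $\gamma^{-1}$ completely positive. This sidesteps the need to decompose $R$ into rank-ones, to verify that $\dmaxp$ is closed under sums, and to check $\cmaxp=\overline{\dmaxp}$---all of which you correctly handle, but which are not needed. Your route and the paper's are dual to one another: you test $(\phi\otimes\psi)^{(p)}(u)$ against all positive $R$ via the adjoints $\phi^d,\psi^d$, whereas the paper tests $G$ against the single element $\phi\otimes\psi$ in the matricial cone of $\oss^d\omax\ost^d$, which is precisely the $R$ one would obtain from your construction when $R$ is the Choi matrix of the identity. The paper's version is shorter; yours makes the bipolar structure explicit and would generalise more readily to settings where one wants to track the cones rather than the maps.
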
 

\begin{proof} Let $\delta$ and $\zeta$ be faithful states on $\oss$ and $\ost$, respectively,
so that $e_{\oss^d}=\delta$ and $e_{\ost^d}=\zeta$ are the Archimedean order units for the
operator systems $\oss^d$ and $\ost^d$.

We first show that there is a completely positive embedding of $\iota_1:\oss^d\omax\ost^d\rightarrow \left(\oss\omin\ost\right)^d$.
Let $P\in\mk(\oss^d)_+$ and $Q\in\mm(\ost^d)_+$; we aim to show that $\hat P\otimes \hat Q$ is a completely
positive map $\oss\omin\ost\rightarrow\mk\otimes\mm$. Any completely positive map of an operator system into
a matrix algebra is a matricial convex combination of matricial states; thus,
\[
\hat P\,=\,\sum_{\mu}\alpha_\mu^*\phi_\mu\alpha_\mu\,,\quad
\hat Q\,=\,\sum_{\nu}\beta_\nu^*\psi_\nu\beta_\nu\,,
\]
where $\phi_\mu$ and $\psi_\nu$ are matricial states on $\oss$ and $\ost$, respectively, and $\alpha_\mu$ and $\beta_\nu$
are rectangular matrices for which $\sum_\nu\alpha_\mu^*\alpha_\mu=1_k$, $\sum_\nu\beta_\nu^*\beta_\nu=1_m$. Hence,
\[
\hat P \otimes \hat Q\,=\,\sum_{\mu,\nu} (\alpha_\mu\otimes\beta_\mu)^*(\phi_\mu\otimes\phi_\nu)(\alpha_\mu\otimes\beta_\nu)
\]
is completely positive. Hence, every $\alpha(\hat P\otimes \hat Q)\alpha^*$ is completely positive, for every linear map 
$\alpha:\mathbb C^k\otimes\mathbb C^m\rightarrow\mathbb C^n$. Identifying $\hat P\otimes\hat Q$ and $P\otimes Q$, 
this proves that 
\[
\alpha(P\otimes Q)\alpha^*\in\mn\left((\oss\omin\ost)^d\right)_+\,.
\]
Therefore, if $G\in \mn(\oss^d\omax\ost^d)_+$, then for every $\varepsilon>0$ there are $P\in\mk(\oss^d)_+$, $Q\in\mm(\ost^d)_+$,
and $\alpha:\mathbb C^k\otimes\mathbb C^m\rightarrow\mathbb C^n$ linear such that 
\[
G+\varepsilon(e_{\oss^d}\otimes e_{\ost^d})\,=\,\alpha(P\otimes Q)\alpha^*\in\mn\left((\oss\omin\ost)^d\right)_+\,.
\]
Because $G+\varepsilon(e_{\oss^d}\otimes e_{\ost^d})\in \mn\left((\oss\omin\ost)^d\right)_+$ for every $\varepsilon>0$, 
the Archimedean property
implies that $G\in\mn\left((\oss\omin\ost)^d\right)_+$. Hence, the canonical linear embedding 
$\iota_1:(\oss^d\otimes\ost^d)\rightarrow(\oss\omin\ost)^d$ is completely positive.

We now show that there is a completely positive embedding of $(\oss^d\omax\ost^d)^d$ into $\oss\omin\ost$.
To this end, fix $n$ and let $G\in \mn\left((\oss^d\omax\ost^d)^d\right)_+$. Because $\oss^{dd}=\oss$ and $\ost^{dd}=\ost$,
$G=[g_{ij}]_{i,j}$ for some $g_{ij}\in\oss\otimes\ost$. Having identified $G$ as an element of $\mn(\oss\otimes\ost)$, we need
only show that $G\in\mn(\oss\omin\ost)_+$. To do so, let $\psi:\oss\rightarrow\mk$ and $\psi:\ost\rightarrow\mm$ be any
pair of matricial states. Thus, $\phi\in\mk(\oss^d)_+$ and $\psi\in\mm(\ost^d)_+$. 
By hypothesis, $\hat G:\oss^d\omax\ost^d\rightarrow\mn$ is completely positive, and so $\hat G^{(km)}(\phi\otimes\psi)$ is 
positive in $\mn(\mk\otimes\mm)$. In writing $\phi$ and $\psi$ as matrices of linear functionals $\phi_{k\ell}$, $\psi_{\mu\nu}$,
we note that
\[
\hat G^{(km)}(\phi\otimes\psi)\,=\,[G(\phi_{k\ell}\otimes\psi_{\mu\nu})]\,=\,[\phi_{k\ell}\otimes\psi_{\mu\nu})(g_{ij})]
\,=\,\phi\otimes\psi\,{}^{(n)}(G)\,,
\]
which implies that $\phi\otimes\psi\,{}^{(n)}(G)\in \mn(\mk\otimes\mm)_+$. Therefore, the canonical linear embedding
$\iota:(\oss^d\omax\ost^d)^d\rightarrow\oss\omin\ost$ is completely positive.

To complete the proof, the completely positive embedding $\iota:(\oss^d\omax\ost^d)^d\rightarrow\oss\omin\ost$ leads to a
completely positive embedding 
\[
\iota_2=\iota^d:(\oss\omin\ost)^d\rightarrow (\oss^d\omax\ost^d)^{dd}\,=\,\oss^d\omax\ost^d\,.
\]
In combination with the embedding $\iota_1$ and because $\iota_1=\iota_2^{-1}$, we obtain a complete order isomorphism of 
$\oss^d\omax\ost^d$ and $\left(\oss\omin\ost\right)^d$.
\end{proof}

\begin{remark} Since $\oss^d$ is an operator system with Archimedian order 
unit $\delta$ and $\ost^d$ is an operator system with Archimedian order unit $\zeta,$ 
the image of $\delta\otimes\zeta$ under the above complete order isomorphism is an 
Archimedean order unit for $(\oss\omin\ost)^d.$ Thus, in particular, $\delta\otimes\zeta$ 
is a faithful state on $\oss\omin\ost$.
\end{remark}

\section{The Operator System $\mn/\kj_n$}\label{S:osw}
  
Assume that $n\geq 2$ and that $w_2,\dots,w_n$ are $n-1$ universal unitaries 
that generate the  full group C$^*$-algebra $\cstar(\fnl)$, where $\fnl$
is the free group on $n-1$ generators. Let $w_1=1\in\cstar(\fnl)$.
Throughout, ${\rm tr}$ denotes the standard (non-normalised) trace functional on a full matrix algebra.
  
\begin{definition} For each $n\geq 2$, let
\begin{enumerate}
\item $\kj_n\subset\mn$ be the vector subspace of all diagonal matrices 
$D\in\mn$ with $\mbox{\rm tr}(D)=0$, and
\item $\osw_n$ be the
operator system in $\cstar(\fnl)$ spanned by $\{w_iw_j^*\,:\,1\leq i,j\leq n\}$.
\end{enumerate}
\end{definition}

We first show that $\kj_n$ is a kernel. To do so, consider the unital linear 
map $\phi:\mn\rightarrow\cstar(\fnl)$ defined
on the matrix units of $\mn$ by 
\begin{equation}\label{defn of phi}
\phi(E_{ij})=\frac{1}{n}w_iw_j^*\,,\;1\leq i,j\leq n\,.
\end{equation} The Choi matrix
corresponding to $\phi$ is
\[
\left[\phi(E_{ij})\right]_{1\leq i,j\leq n}\;=\;\frac{1}{n}[w_iw_j^*]_{1\leq i,j \leq n}\;=\;\frac{1}{n}W^*YW\in (\cstar(\fnl)\otimes\mn)_+\,,
\]
where $W=\sum_iw_i^*\otimes E_{ii}$ and $Y=\sum_{i,j}1\otimes E_{ij}\,\in\, (\cstar(\fnl)\otimes\mn)_+$.
Hence, $\phi$ is a completely positive linear map of $\mn$ onto $\osw_n$.
It is clear that $\kj_n\subseteq\ker\phi$. Conversely, suppose that $A\in\ker\phi$. Then,
\[
0\;=\;\phi(A)\;=\;(\sum_{i=1}^n a_{ii})1\,+\,\frac{1}{n}\sum_{j\neq i}a_{ij}(w_iw_j^*)\,,
\]
and so $a_{ij}=0$ for all $j\neq i$ and $\mbox{tr}(A)=0$. That is, $\ker\phi\subseteq\kj_n$. 

Because $\kj_n$ is a kernel, we form and study the quotient operator system $\mn/\kj_n$. In this section we will show that
the ucp map $\phi:\mn\rightarrow\osw_n$ is a complete quotient map and that the C$^*$-envelope of the operator system
$\mn/\kj_n$ is $\cstar(\fnl)$.

\begin{notation} 
The order unit $q(1)$ of $\mn/\kj_n$ is denoted  by $\dot{1}$ and $e_{ij}$ denotes $q(E_{ij})$ for
every matrix unit $E_{ij}$ of $\mn$.
\end{notation} 
  
\begin{lemma}\label{norm} For every $i$ and $j$, $e_{ii}=\frac{1}{n}\dot{1}$ and $\|e_{ij}\|=\frac{1}{n}$.
\end{lemma}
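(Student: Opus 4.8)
The plan is to compute directly using the defining cones $C_n(\mn/\kj_n)$, or more conveniently their preimages $\mathcal C_n(\mn/\kj_n)$, together with Proposition \ref{proxchar}, which tells us that $\mathcal D_n(\mn/\kj_n) = \mn(\kj_n) + \mn(\mn)_+$. For the first assertion, I would argue that $e_{ii} = \tfrac1n\dot 1$ is essentially immediate: the matrix $E_{ii} - \tfrac1n 1$ is a diagonal matrix of trace $0$, hence lies in $\kj_n$, so $q(E_{ii}) = \tfrac1n q(1)$. This gives $e_{ii} = \tfrac1n\dot 1$ for every $i$.

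For the norm computation I would use that $\mn/\kj_n$ is an operator system with Archimedean order unit $\dot 1$, so that for a selfadjoint element the order-unit norm is $\|x\| = \inf\{t > 0 : -t\dot 1 \le x \le t\dot 1\}$, and for a general element $\|e_{ij}\| = \inf\{t : \bigl[\begin{smallmatrix} t\dot1 & e_{ij} \\ e_{ij}^* & t\dot 1\end{smallmatrix}\bigr] \ge 0 \text{ in } \mtwo(\mn/\kj_n)\}$ (the standard $2\times 2$ characterization of the norm in an operator system). To get the upper bound $\|e_{ij}\|\le \tfrac1n$, I would exhibit, for $i\ne j$, the lift $\tfrac1n\bigl(E_{ii}+E_{jj}\bigr)\otimes I_2$-type positive element: concretely the $2\times2$ matrix over $\mn$ with corners $\tfrac1n(E_{ii}+E_{jj})$ on the diagonal and $\tfrac1n E_{ij}$, $\tfrac1n E_{ji}$ off-diagonal, which is $\tfrac1n$ times a rank-one positive matrix over $\mn$, hence positive; since $E_{ii} - \tfrac1n 1 \in \kj_n$, modding out by $\kj_n$ replaces the diagonal corners by $\tfrac1n\dot1$, witnessing $\|e_{ij}\|\le\tfrac1n$. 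Alternatively and more cleanly, since $\phi:\mn\to\osw_n$ is unital completely positive it is a complete contraction, so $\|e_{ij}\| \le \|\phi(E_{ij})\| = \tfrac1n\|w_iw_j^*\| = \tfrac1n$ once we know (which we do not yet, strictly, before the complete-quotient-map result) that $\dot\phi$ is isometric; to stay self-contained I would rely instead on the explicit positive lift above for the upper bound.

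For the lower bound $\|e_{ij}\| \ge \tfrac1n$, I would produce a state, or rather a ucp map, on $\mn/\kj_n$ that detects the norm. The natural candidate is to pull back a suitable state of $\cstar(\fnl)$ along $\dot\phi$: since $w_iw_j^*$ is a unitary in $\cstar(\fnl)$ (for $i\ne j$ it is a nontrivial group element) there is a state $\rho$ with $|\rho(w_iw_j^*)| = 1$, and then $\rho\circ\phi$ is a state on $\mn$ with $|\rho\circ\phi(E_{ij})| = \tfrac1n$, which factors through $q$ because $\rho\circ\phi$ kills $\kj_n = \ker\phi$; this yields a state $\dot\rho$ on $\mn/\kj_n$ with $|\dot\rho(e_{ij})| = \tfrac1n$, forcing $\|e_{ij}\|\ge\tfrac1n$. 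Combining the two bounds gives $\|e_{ij}\| = \tfrac1n$ for $i\ne j$, and for $i=j$ the first part gives $\|e_{ii}\| = \tfrac1n\|\dot1\| = \tfrac1n$.

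The only genuinely delicate point is making sure the "upper bound" half does not secretly invoke the complete-quotient-map theorem that is proved later in the section; I expect the main obstacle to be checking that the explicit $2\times2$ positive matrix over $\mn$ really does descend to the required positive element of $\mtwo(\mn/\kj_n)$, i.e.\ that its image lies in $C_2(\mn/\kj_n)$ — but this is exactly the content of $\mathcal D_2 \subseteq \mathcal C_2$ together with Proposition \ref{proxchar}(1), so it is routine. Everything else is a short computation with matrix units and the trace-zero-diagonal description of $\kj_n$.
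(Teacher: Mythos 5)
Your overall strategy is essentially the paper's: $e_{ii}=\tfrac1n\dot{1}$ because $E_{ii}-\tfrac1n 1$ is a trace-zero diagonal matrix, the upper bound comes from pushing an explicit positive $2\times 2$ matrix over $\mn$ through $q^{(2)}$ and using the $2\times2$ characterisation of the norm, and the lower bound comes from a state of $\mn/\kj_n$ that does not vanish at $e_{ij}$. Your lower bound is valid and non-circular: $\dot{\phi}$ is ucp already by the First Isomorphism Theorem (no complete-quotient-map result needed), so composing it with a character of $\cstar(\fnl)$ gives the required state; with the trivial character this is literally the paper's state $\dot{s}$ coming from the density matrix with all entries $\tfrac1n$.

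The one genuine problem is the upper-bound step as written: it does not give $\tfrac1n$. Your lift is $\tfrac1n\bigl[\begin{smallmatrix} E_{ii}+E_{jj} & E_{ij}\\ E_{ji} & E_{ii}+E_{jj}\end{smallmatrix}\bigr]$ (which is positive, though not rank one --- it is the rank-one matrix plus a positive diagonal), and its image under $q^{(2)}$ has diagonal corners $q\bigl(\tfrac1n(E_{ii}+E_{jj})\bigr)=\tfrac{2}{n^{2}}\dot{1}$, not $\tfrac1n\dot{1}$ as you assert. The resulting positive element $\bigl[\begin{smallmatrix} \frac{2}{n^{2}}\dot{1} & \frac1n e_{ij}\\ \frac1n e_{ji} & \frac{2}{n^{2}}\dot{1}\end{smallmatrix}\bigr]$ only witnesses $\|e_{ij}\|\le\tfrac2n$, which is not enough to conclude the stated equality. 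The fix is to drop the extra diagonal terms and the scalar $\tfrac1n$: take $Q=\bigl[\begin{smallmatrix} E_{ii} & E_{ij}\\ E_{ji} & E_{jj}\end{smallmatrix}\bigr]$, which is rank-one positive in $\mtwo(\mn)$ and whose image under $q^{(2)}$ is exactly $\bigl[\begin{smallmatrix} \frac1n\dot{1} & e_{ij}\\ e_{ji} & \frac1n\dot{1}\end{smallmatrix}\bigr]$, yielding $\|e_{ij}\|\le\tfrac1n$; this is precisely the paper's argument. Your concern about landing in $C_2$ rather than $D_2$ is harmless: $q^{(2)}$ of a positive matrix lies in $D_2(\mn/\kj_n)\subseteq C_2(\mn/\kj_n)$, so no proximinality input is needed for this direction.
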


\begin{proof} 
Suppose that $j\neq i$. The matrix $Q=\left[\begin{array}{cc} E_{ii} & E_{ij} \\ E_{ji} & E_{jj}\end{array}\right]$ is positive in
$\mtwo(\mn)$, and therefore $q^{(2)}(Q)$ is positive in $\mtwo(\mn/\kj_n)$. However, the matrix
\[
q^{(2)}(Q) \;=\; \left[ \begin{array}{cc} \frac{1}{n}\dot{1} & e_{ij} \\ e_{ji} & \frac{1}{n}\dot{1}\end{array}\right]  
\]
is positive only if $\|e_{ij}\|\leq\frac{1}{n}$. Consider now the density matrix $\rho\in\mn$ with $\frac{1}{n}$ in every entry
and let $s_\rho$ be the state on $\mn$ defined by $s_\rho(X)=\mbox{tr}(\rho X)$. Then $s_\rho(\kj_n)=\{0\}$ and so we obtain
a well-defined state $\dot{s}$ on $\mn/\kj_n$ via $\dot{s}(\dot{X})=s_\rho(X)$. With this state  $\dot{s}$, we have
$\dot{s}(e_{ij})=\mbox{tr}(\rho E_{ij})=\frac{1}{n}$, which implies that $\|e_{ij}\|\geq\frac{1}{n}$.
\end{proof}

 The result above shows that the norm on the operator system quotient $\mn/\kj_n$
 is quite different from the {\em quotient norm} of $\mn$ by
 the subspace $\kj_n$. Indeed, in the usual quotient norm, one has
 $\|e_{ij}\|_q \equiv \inf \{ \|E_{ij} + K \|: K \in \kj_n \} = 1$,
 when $i \ne j$.

\begin{lemma}\label{cop} $\kj_n \subseteq \mn$ is completely order proximinal.
\end{lemma}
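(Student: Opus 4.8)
The plan is to use the criterion from Proposition~\ref{proxchar}(3): it suffices to show that $\mn(\kj_n)+\mn(\mm)_+$ is norm-closed in $\mn(\mm_n)_{\rm sa}$ for every $n$ — wait, let me be careful with indices. Writing $k$ for the matrix-level parameter, I must show $\mk(\kj_n)+\mk(\mn)_+$ is closed in $\mk(\mn)_{\rm sa}$ for every $k\in\mathbb N$. The key structural observation is that $\kj_n$, being the space of trace-zero diagonal matrices, is a very concrete finite-dimensional subspace, and $\mk(\kj_n)=\mk\otimes\kj_n$ sits inside $\mk(\mn)=\mk\otimes\mn$ in a way that interacts nicely with the diagonal-matrix conditional expectation.

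\textbf{Step 1: Reduce to a statement about cones and use finite dimensionality.} Both $\mk(\mn)_+$ and $\mk(\kj_n)$ are closed; the only issue is that the sum of two closed cones need not be closed. The standard tool is: the sum $C+V$ of a closed cone $C$ and a subspace $V$ is closed provided $C\cap(-V\cap\overline{C+V})$ behaves well — more concretely, it suffices to show that if $H\in\overline{\mk(\kj_n)+\mk(\mn)_+}$ then $H\in\mk(\kj_n)+\mk(\mn)_+$. I would try to extract, from a sequence $H = \lim (K_m + P_m)$ with $K_m\in\mk(\kj_n)$, $P_m\in\mk(\mn)_+$, a \emph{bounded} such representing sequence. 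The obstacle to boundedness is that $K_m$ and $P_m$ could both blow up while their sum converges; the cure is to show the directions in which they can blow up are controlled, i.e. the cone $\mk(\mn)_+$ meets $\mk(\kj_n)$ only in a subspace on which cancellation is harmless.

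\textbf{Step 2: Pin down $\mk(\mn)_+\cap\mk(\kj_n)$ and the relevant lineality.} An element of $\mk\otimes\kj_n$ is a $k\times k$ matrix whose $(i,j)$ entry is a trace-zero diagonal matrix in $\mn$; equivalently, after reshuffling, it is block-diagonal supported on the $n$ ``diagonal'' copies of $\mk$, with the copies summing (in an appropriate sense) to zero. Such an element is positive in $\mk(\mn)=\mk\otimes\mn$ iff each of these $n$ blocks in $\mk$ is positive; but positive matrices in $\mk$ summing to zero must all be zero. Hence $\mk(\mn)_+\cap\mk(\kj_n)=\{0\}$. This is the crucial point: the cone and the subspace have trivial intersection, which for finite-dimensional closed cones is essentially enough — a closed cone plus a subspace meeting it only at $0$ need not in general be closed, but here I would push further using the explicit splitting.

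\textbf{Step 3: Exploit the diagonal expectation to split off $\kj_n$.} Let $\Delta:\mn\to\mn$ be the expectation onto diagonal matrices and $\tau = \frac1n\mathrm{tr}$ the normalized trace, so that $E := \Delta - \tau(\,\cdot\,)\mathbf 1$ is the idempotent projection of $\mn$ onto $\kj_n$, with complementary projection $F = \mathrm{id} - E$ (onto off-diagonal plus scalar part). Apply $F^{(k)} := \mathrm{id}_{\mk}\otimes F$ to $H = \lim(K_m+P_m)$: since $F$ kills $\kj_n$, $F^{(k)}(H) = \lim F^{(k)}(P_m)$, and $H - F^{(k)}(H) = E^{(k)}(H)\in\mk(\kj_n)$ automatically. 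So it remains only to show $F^{(k)}(H)\in\mk(\mn)_+ + \mk(\kj_n)$, and in fact I claim $F^{(k)}(H)\in\mk(\mn)_+$ outright. The point: if I can choose the representing sequence so that $P_m$ lies in a \emph{fixed} complement direction, the limit stays positive. Here is where I would invoke that, for each $m$, one may \emph{replace} $P_m$ by $P_m + (\text{suitable element of }\mk(\kj_n))$ so that $\Delta^{(k)}$-part of the adjusted $P_m$ is normalized — concretely, subtract from $P_m$ its $E^{(k)}$-component (legal, since that component lies in $\mk(\kj_n)$), and absorb the correction into $K_m$. The adjusted $\tilde P_m := P_m - E^{(k)}(P_m) = F^{(k)}(P_m) + \tau^{(k)}\text{-part}$ need no longer be positive, so this naive move fails; instead I would argue that $E^{(k)}(P_m)$ is \emph{bounded} — because $\|E^{(k)}(P_m)\|\le C\|\mathrm{diag}(P_m)\|\le C\|P_m\|$ is useless without a bound on $\|P_m\|$, so the real work is a compactness/closedness argument in the quotient $\mk(\mn)/\mk(\kj_n)$ showing the image of $\mk(\mn)_+$ there is closed.

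\textbf{The main obstacle,} then, is precisely establishing that the image of the closed cone $\mk(\mn)_+$ under the quotient map $\mk(\mn)\to\mk(\mn)/\mk(\kj_n)$ is closed. I expect the clean way is: this image is exactly $D_k(\mn/\kj_n)$, and to show it equals $C_k(\mn/\kj_n)$ I would give an explicit positive lift. Given $\dot H\in C_k(\mn/\kj_n)$, for every $\varepsilon>0$ there is $K_\varepsilon\in\mk(\kj_n)$ with $\varepsilon\dot 1 + H + K_\varepsilon\ge 0$ in $\mk(\mn)$. Now use the complete quotient map $\phi:\mn\to\osw_n$ (established just before this lemma) together with Lemma~\ref{norm}: $\phi^{(k)}$ identifies $\mn/\kj_n$ with $\osw_n$ via a complete order isomorphism, so $\dot H$ corresponds to an element $Z\in\mk(\osw_n)$ with $\varepsilon\,\phi^{(k)}(\dot 1) + Z\ge 0$ in $\mk(\cstar(\fnl))$ for all $\varepsilon$, hence $Z\ge 0$; then lift $Z$ through the Choi-matrix picture $\frac1n W^*YW$ back to a \emph{specific} positive element of $\mk(\mn)_+$ mapping to $\dot H$. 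Making this lift explicit — writing down the preimage in $\mk(\mn)_+$ directly from the formula $\phi(E_{ij})=\frac1n w_iw_j^*$ and the positivity of $W^*YW$ — is the crux, and I would expect to phrase it as: the completely positive \emph{section} $\mn \ni X \mapsto$ (the matrix whose entries reconstruct $X$ from its $\phi$-image) realizes $D_k = C_k$. If the complete quotient property of $\phi$ is only proven \emph{after} this lemma in the paper, then instead I would prove Lemma~\ref{cop} first and purely internally to $\mn$, via Steps~1--3 and the quotient-cone-closedness argument, with the finite-dimensionality of $\kj_n$ doing the heavy lifting through a standard ``closed cone $+$ finite-dimensional subspace with trivial intersection, plus a recession-cone check'' lemma.
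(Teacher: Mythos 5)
Your primary route is circular in this paper's development: the complete quotient property of $\phi:\mn\rightarrow\osw_n$ (Theorem~\ref{mn mod jn}) rests on Proposition~\ref{positive}, whose implication \eqref{1}$\Rightarrow$\eqref{9} invokes precisely the complete order proximinality of $\kj_n$ that Lemma~\ref{cop} asserts; so lifting positives through $\osw_n$, or any appeal to $\dot\phi$ being a complete order isomorphism, is not available at this point. You anticipated this, and your fallback is the operative content of the proposal; it is correct. By Proposition~\ref{proxchar} one must show that $\mk(\mn)_+ + \mk(\kj_n)$ is norm closed for every $k$. Your Step 2 computation (an element of $\mk(\kj_n)$, reshuffled into $\mn(\mk)$, is block diagonal with blocks summing to zero, so positivity forces all blocks to vanish) gives $\mk(\mn)_+\cap\mk(\kj_n)=\{0\}$, and in finite dimensions a closed cone plus a subspace meeting it only in $0$ does have closed sum: if $P_m+K_m\to H$ with $\|P_m\|\to\infty$, then $P_m/\|P_m\|$ has a unit-norm limit point which lies in $\mk(\mn)_+\cap(-\mk(\kj_n))=\mk(\mn)_+\cap\mk(\kj_n)=\{0\}$, a contradiction; hence both sequences are bounded and convergent subsequences give $H\in\mk(\mn)_+ + \mk(\kj_n)$. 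The hedge in your Step 2 (``need not in general be closed'') is misplaced here: the classical counterexamples require a half-line of the second cone to sit inside the first, and since $\mk(\kj_n)$ is a subspace the ``recession-cone check'' you defer to is exactly your trivial-intersection computation. Step 3's expectation-splitting attempt can simply be deleted, as you yourself observe it fails.

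The resulting argument differs from the paper's proof, which obtains boundedness of the positive parts directly: applying the partial trace ${\rm id}_p\otimes{\rm tr}$ annihilates $\mpee(\kj_n)$, so the partial traces of the $P_m$ converge and are bounded; bounded trace bounds the norm of a positive matrix, and one extracts a convergent subsequence. Your route buys a soft convexity argument that uses only the trivial intersection of the cone with the subspace, at the cost of having to supply (or cite) the cone-plus-subspace closedness lemma; the paper's partial trace is a concrete positive functional witnessing the same disjointness and delivering the norm bound in one line.
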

\begin{proof} By Proposition~\ref{proxchar}, we need to prove that for
  every $p \in \mathbb N,$ $\mpee(\mn)_+ + \mpee(\kj_n)$ is closed.
To this end assume that $P_k= (P_{ij}^k) \in \mpee(\mn)_+$ and $K_k=
(K_{ij}^k) \in \mpee(\kj_n)$ are sequences such that $\|H - P_k - K_k \|
\to 0,$ where $H= (H_{ij}).$ Applying the partial trace ${\rm id}_p \otimes
{\rm tr}$ leads to
 $\|( {\rm tr}(H_{ij}) - ({\rm tr}(P_{ij}^k)) \| \to 0$.
Consequently, the set $\{ ({\rm tr}(P_{ij}^k) \in \mpee: k \in \mathbb N \}$
is norm bounded. But this implies that the set of traces of this set
of matrices is bounded, which in turn implies that the set of positive
matrices $P_k$ has bounded trace and hence is a norm bounded
set. Thus, by dropping to a subsequence if necessary, we may assume
that $P_k$ converges in norm to some $P$ that is necessarily  in
$\mpee(\mn)_+$, since the latter set is closed. But then for this same
subsequence, we have that $K_k$ converges to an element $K \in
\mpee(\kj_n)$ and so $H = P +K \in \mpee(\mn)_+ + \mpee(\kj_n)$ and the
result follows.
\end{proof}

The following result gives a variety of characterisations of positivity for the matricial ordering of the operator system $\mn/\kj_n$.

\begin{proposition}\label{positive} The following statements are equivalent for matrices $A_{11},A_{ij}\in\mpee$, with $1\leq i,j\leq n$ and $j\not=i$:
\begin{enumerate}
\item\label{1} $\dot{1}\otimes A_{11}+\displaystyle\sum_{j\neq i}e_{ij}\otimes A_{ij}$ is positive in $(\mn/\kj_n)\otimes\mpee$;
\item\label{2} $\dot{1}\otimes A_{11}+\displaystyle\sum_{j\neq i}\dot{\psi}(e_{ij})\otimes A_{ij}$ is positive in $\mr\otimes\mpee$
for every $r\in\mathbb N$ and every ucp map
$\dot{\psi}:\mn/\kj_n\rightarrow\mr$;
\item\label{3} $1_r\otimes A_{11}+\displaystyle\sum_{j\neq i} \psi(E_{ij})\otimes A_{ij}$ is positive in $\mr\otimes\mpee$
for every $r\in\mathbb N$ and every ucp map
$\psi:\mn\rightarrow\mr$ such that $\psi(\kj_n)=\{0\}$;
\item\label{4} $1_r\otimes A_{11}+\displaystyle\sum_{j\neq i} \psi(E_{ij})\otimes A_{ij}$ is positive in $\mr\otimes\mpee$
for every $r\in\mathbb N$ and every ucp map
$\psi:\mn\rightarrow\mr$ such that $\psi(E_{ii})=\frac{1}{n}1_r$, for all $1\leq i\leq n$;
\item\label{5} $1_r\otimes A_{11}+\displaystyle\sum_{j\neq i} B_{ij}\otimes A_{ij}$ is positive in $\mr\otimes\mpee$
for every $r\in\mathbb N$ and every collection of matrices
$B_{ij}\in\mr$, $j\neq i$, with the property that
\[
\left[ \begin{array}{cccc} \frac{1}{n}1_r & B_{12} & \cdots & B_{1n} \\ B_{21} & \frac{1}{n}1_r & & \vdots \\ 
                                         \vdots & & \ddots & \vdots \\ B_{n1} & \dots & B_{n,n-1} & \frac{1}{n}1_r \end{array}\right]
 \]
is positive in  $\mr\otimes\mn$;    
\item\label{6} $1_r\otimes A_{11}+\displaystyle\sum_{j\neq i} \frac{1}{n}C_{ij}\otimes A_{ij}$ is positive in $\mr\otimes\mpee$
for every  $r\in\mathbb N$ and every collection of matrices
$C_{ij}\in\mr$, $j\neq i$, with the property that
\[
\left[ \begin{array}{cccc} 1_r & C_{12} & \cdots & C_{1n} \\ C_{21} & 1_r & & \vdots \\ 
                                         \vdots & & \ddots & \vdots \\ C_{n1} & \dots & C_{n,n-1} & 1_r \end{array}\right]
 \]
is positive in  $\mr\otimes\mn$;
\item\label{7} $1\otimes A_{11}+\displaystyle\sum_{j\neq i}(\frac{1}{n}w_iw_j^*)\otimes A_{ij}$ is positive in $\cstar(\fnl)\otimes\mpee$;
\item\label{8} $1\otimes (nA_{11})\,+\,\displaystyle\sum_{i=1} ^nE_{ii}\otimes B_i\,+\,\displaystyle\sum_{j\neq i}E_{ij}\otimes A_{ij}$ 
is positive in $\mn\otimes\mpee$ for some $B_1,\dots, B_n\in \mpee$ such that $\displaystyle\sum_{i=1}^nB_i\,=\,(n-n^2)A_{11}$.                                  
\item\label{9} $\sum_{i,j=1}^n E_{ij}\otimes R_{ij}$ is positive in $\mn\otimes\mpee$ for some $R_{ij} \in \mpee$ 
such that $R_{ij} = A_{ij}$ for $i \neq j$ and $R_{11} + \cdots + R_{nn} = n A_{11}.$
\end{enumerate}
\end{proposition}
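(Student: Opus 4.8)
The plan is to establish the equivalences by a circuit of implications, grouping the nine conditions according to what they really say. The central observations are: first, conditions \eqref{1}, \eqref{7}, \eqref{8}, \eqref{9} are all direct reformulations of positivity in $(\mn/\kj_n)\otimes\mpee$ via the lifting description $\mathcal C_p(\mn/\kj_n)=\mpee(\kj_n)+\mpee(\mn)_+$ (Lemma \ref{cop} and Proposition \ref{proxchar}) together with the complete order isomorphism $\mn/\kj_n\cong\osw_n$ that we have already identified through $\phi$; second, conditions \eqref{2}--\eqref{6} are all various ways of saying ``positive after testing against every ucp map into a matrix algebra,'' which by the Choi--Effros realisation of $\mn/\kj_n$ inside $\bh$ (equivalently, by the fact that $\mpee(\oss)_+=(\oss\omin\mpee)_+$ and that ucp maps into matrix algebras separate the cones of an operator system) is equivalent to honest positivity.

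Concretely, I would proceed as follows. First I would prove $\eqref{1}\Leftrightarrow\eqref{7}$: this is immediate, since $\phi:\mn\to\osw_n$ is a complete quotient map with $\ker\phi=\kj_n$ completely order proximinal (Lemmas \ref{cop}, and the complete-quotient claim preceding the Notation), so $\dot\phi:\mn/\kj_n\to\osw_n$ is a complete order isomorphism carrying $e_{ij}$ to $\frac1n w_iw_j^*$ and $e_{ii}$ to $\frac1n\dot1$ (Lemma \ref{norm}); tensoring with $\mathrm{id}_{\mpee}$ preserves this. Next, $\eqref{1}\Leftrightarrow\eqref{9}$: by Proposition \ref{proxchar}(1) and Lemma \ref{cop}, an element $\dot H\in\mpee(\mn/\kj_n)$ is positive iff some lift $H$ lies in $\mpee(\mn)_+ + \mpee(\kj_n)$; writing the given element of $(\mn/\kj_n)\otimes\mpee$ as $q^{(p)}$ of $\sum_{i\ne j}E_{ij}\otimes A_{ij}+\sum_i E_{ii}\otimes(A_{11})$ (using $e_{ii}=\frac1n\dot1$, so $\dot1\otimes A_{11}=\sum_i e_{ii}\otimes(nA_{11})$, i.e. a lift is $\sum_i E_{ii}\otimes(nA_{11})$), a general lift differs by an element of $\mpee(\kj_n)=\{\sum_iE_{ii}\otimes D_i : \sum_i D_i=0\}$, which is exactly the freedom described in \eqref{9} (set $R_{ii}=nA_{11}+D_i$, so $\sum R_{ii}=nA_{11}$). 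Condition \eqref{8} is a trivial renaming of \eqref{9}: put $R_{ii}=B_i$ so that the trace constraint reads $\sum B_i = nA_{11}$; but in \eqref{8} the diagonal is written $1\otimes(nA_{11})+\sum_i E_{ii}\otimes B_i=\sum_i E_{ii}\otimes(nA_{11}+B_i)$, giving $\sum(nA_{11}+B_i)=n^2A_{11}+\sum B_i=nA_{11}$, i.e. $\sum B_i=(n-n^2)A_{11}$, matching \eqref{8} verbatim; so $\eqref{8}\Leftrightarrow\eqref{9}$.

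For the middle block: $\eqref{1}\Rightarrow\eqref{2}$ is just applying a ucp map $\dot\psi\otimes\mathrm{id}_{\mpee}$; $\eqref{2}\Rightarrow\eqref{1}$ holds because $\mn/\kj_n$ is an operator system, hence sits completely order isomorphically in some $\bk$ (Choi--Effros), so positivity in $(\mn/\kj_n)\otimes\mpee=(\mn/\kj_n)\omin\mpee$ is detected by matrix-valued ucp maps — more precisely, $x\in\mpee(\mathcal S)_+$ iff $(\dot\psi)^{(p)}(x)\ge0$ for every ucp $\dot\psi:\mathcal S\to\mr$, $r\in\mathbb N$, since finite-rank ucp maps separate the cone. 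Then $\eqref{2}\Leftrightarrow\eqref{3}$ is the First Isomorphism Theorem (Theorem \ref{1st iso thm}) bijection between ucp maps on $\mn/\kj_n$ and ucp maps on $\mn$ annihilating $\kj_n$, under which $\dot\psi(e_{ij})=\psi(E_{ij})$. Next $\eqref{3}\Leftrightarrow\eqref{4}$: a ucp map $\psi:\mn\to\mr$ kills $\kj_n$ iff it is constant on the diagonal matrix units, and since $\psi$ is unital that constant must be $\frac1n 1_r$ (sum of the $\psi(E_{ii})$ is $1_r$); so the two families of test maps coincide. Finally $\eqref{4}\Leftrightarrow\eqref{5}\Leftrightarrow\eqref{6}$: given such a $\psi$, the matrices $B_{ij}=\psi(E_{ij})$, $j\ne i$, satisfy the stated positivity (it is the image under $\psi^{(n)}$, or rather $\mathrm{id}_r\otimes\psi$, of the positive matrix $[E_{ij}]$), so $\eqref{4}\Rightarrow\eqref{5}$; conversely, given $B_{ij}$ making that $\mr\otimes\mn$ matrix positive, the Choi correspondence produces a ucp $\psi:\mn\to\mr$ with $\psi(E_{ii})=\frac1n1_r$ and $\psi(E_{ij})=B_{ij}$, giving $\eqref{5}\Rightarrow\eqref{4}$; and $\eqref{5}\Leftrightarrow\eqref{6}$ is the substitution $C_{ij}=nB_{ij}$ (note the displayed $\mr\otimes\mn$ matrix in \eqref{6} is $n$ times that in \eqref{5}). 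Collecting $\eqref{1}\Leftrightarrow\eqref{2}\Leftrightarrow\cdots\Leftrightarrow\eqref{9}$ completes the proof.

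The main obstacle I anticipate is the implication $\eqref{2}\Rightarrow\eqref{1}$ (equivalently $\eqref{7}\Rightarrow\eqref{1}$), i.e. checking that matrix-valued ucp maps genuinely detect positivity in the quotient operator system and that no completeness subtlety is lost; everything else is bookkeeping with the Choi matrix correspondence and the diagonal-constancy characterisation of $\kj_n$. One clean way to handle it: realise $\mn/\kj_n$ as an operator subsystem of $\bk$, note that the net of ucp compressions to finite-dimensional subspaces of $\mathcal K$ converges point-norm to the identity, so positivity of a fixed element of $\mpee(\mn/\kj_n)$ follows from positivity of all its finite-rank compressions, which is precisely \eqref{2}.
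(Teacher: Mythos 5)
There is a genuine gap, and it sits exactly at the one place where the proposition has real content: the link between \eqref{7} and the other conditions. You declare $\eqref{1}\Leftrightarrow\eqref{7}$ ``immediate'' because $\phi:\mn\rightarrow\osw_n$ is a complete quotient map, citing ``the complete-quotient claim preceding the Notation.'' But at that point in the paper this is only an announcement of what \emph{will} be shown: the statement that $\dot{\phi}$ is a complete order isomorphism is Theorem \ref{mn mod jn}, and its proof consists precisely of invoking the equivalence $\eqref{1}\Leftrightarrow\eqref{7}$ of the present proposition. Before Proposition \ref{positive} all that is available is that $\phi$ (hence $\dot{\phi}$) is completely positive with $\ker\phi=\kj_n$, which gives $\eqref{1}\Rightarrow\eqref{7}$ but not the converse. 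So your argument is circular: the direction $\eqref{7}\Rightarrow\eqref{1}$ is exactly what must be proved, and in your circuit nothing else reaches it, since $\eqref{2}$--$\eqref{6}$, $\eqref{8}$, $\eqref{9}$ are all tied back to \eqref{1} only. The missing idea is the paper's proof of $\eqref{7}\Rightarrow\eqref{6}$: given matrices $C_{ij}$ with the displayed matrix $C\in(\mr\otimes\mn)_+$ having identity diagonal blocks, factor $C=AA^*$, observe that the block rows $a_j^*$ are isometries, dilate them to unitaries $u_j^*$ (Halmos dilation), and use the universal property of $\cstar(\fnl)$ to obtain a representation $\pi$ with $\pi(w_j)=u_j^*$; compressing gives a ucp map $\psi$ on $\cstar(\fnl)$ with $\psi(w_iw_j^*)=C_{ij}$, and applying $\psi^{(p)}$ to the positive element in \eqref{7} yields \eqref{6}. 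Without some such dilation/universality argument the strongest-looking condition \eqref{7} cannot be pulled back. Note also that you identified $\eqref{2}\Rightarrow\eqref{1}$ as the main obstacle; that step is in fact routine (matrix-valued ucp maps detect positivity in any operator system, as the paper notes via Choi--Effros), whereas the genuinely hard implication is the one you passed over.

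The rest of your bookkeeping is essentially the paper's: $\eqref{1}\Leftrightarrow\eqref{9}\Leftrightarrow\eqref{8}$ via complete order proximinality of $\kj_n$ (modulo a harmless factor-of-$n$ slip: a lift of $\dot{1}\otimes A_{11}$ is $\sum_i E_{ii}\otimes A_{11}$, so the diagonal blocks are $R_{ii}=A_{11}+D_i$ with $\sum_i D_i=0$, giving $\sum_i R_{ii}=nA_{11}$), and $\eqref{2}\Leftrightarrow\eqref{3}\Leftrightarrow\eqref{4}\Leftrightarrow\eqref{5}\Leftrightarrow\eqref{6}$ via the First Isomorphism Theorem, the diagonal-constancy description of maps killing $\kj_n$, and the Choi-matrix correspondence. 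Those parts are fine; the proof is incomplete only, but crucially, at $\eqref{7}$.
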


\begin{proof} We shall prove that \eqref{1} $\Rightarrow$ \eqref{9} $\Rightarrow$ \eqref{8} $\Rightarrow$ ... $\Rightarrow$ \eqref{1}.

The hypothesis \eqref{1} is that $r=\dot{1}\otimes A_{11}+\displaystyle\sum_{j\neq i}e_{ij}\otimes A_{ij}$ is positive in $(\mn/\kj_n)\otimes\mpee$.
Because $\kj_n \subseteq M_n$ is completely order proximinal, there is a positive $R\in \mn\otimes\mpee$ such that $q\otimes{\rm id}_p(R)=r$. In writing
$R$ as $R=\sum_i E_{ii}\otimes R_{ii}+\sum_{j\neq i}E_{ij}\otimes R_{ij}$, we obtain from $r=\sum_{i,j}e_{ij}\otimes R_{ij}$ that $R_{ij}=A_{ij}$ for all $j\neq i$
and that $A_{11}=\frac{1}{n}\sum_iR_{ii}$. Thus \eqref{9} follows. To see \eqref{8}, set $R_{ii}=nA_{11}+B_i$, where $B_i=-\sum_{k\neq i}R_{kk}$, and so 
$nA_{11}=\sum_iR_{ii}=n^2A_{11}+\sum_iB_i$ implies that $\sum_iB_i=(n-n^2)A_{11}$, which is statement \eqref{8}.

Now assume \eqref{8}. Because
$1\otimes nA_{11}\,+\,\displaystyle\sum_{i=1}^n E_{ii}\otimes B_i\,+\,\displaystyle\sum_{j\neq i}E_{ij}\otimes A_{ij}$ 
is positive in $\mn\otimes\mpee$, the image of this matrix under the map $\phi\otimes{\rm id}_p$ is positive 
in $\cstar(\fnl)\otimes\mpee$, where $\phi:\mn\rightarrow\cstar(\fnl)$ is the ucp map \eqref{defn of phi} in Lemma \ref{norm}. 
That is, the following element is positive:
\[
\begin{array}{rcl}
&&1\otimes (nA_{11})+\displaystyle\sum_{i=1}^n \frac{1}{n}1\otimes B_i+ \displaystyle\sum_{j\neq i}\frac{1}{n}w_iw_j^*\otimes A_{ij} \\&& \\
&=& 1\otimes\left(nA_{11}+\frac{1}{n}\displaystyle\sum_{i=1}^n   B_i\right)+\displaystyle\sum_{j\neq i}\frac{1}{n}w_iw_j^*\otimes A_{ij} \\&& \\
&=& 1\otimes A_{11} + \displaystyle\sum_{j\neq i}\frac{1}{n}w_iw_j^*\otimes A_{ij}\,,
\end{array}
\]
because $\frac{1}{n}\displaystyle\sum_{i=1}^n   B_i=(1-n)A_{11}$. This establishes \eqref{7}.

 Next assume \eqref{7}: namely,
$1\otimes A_{11}+\displaystyle\sum_{j\neq i}(\frac{1}{n}w_iw_j^*)\otimes A_{ij}$ 
is positive in $\cstar(\fnl)\otimes\mpee$.
Let $r\in\mathbb N$ and suppose that the matrices $C_{ij}\in\mr$, $j\neq i$, satisfy
\begin{equation}\label{matrix C}
C\;=\;\left[ \begin{array}{cccc} 1_r & C_{12} & \cdots & C_{1n} \\ C_{21} & 1_r & & \vdots \\ 
                                         \vdots & & \ddots & \vdots \\ C_{n1} & \dots & C_{n,n-1} & 1_r \end{array}\right] \,\in \, (\mr\otimes\mn)_+\,.
 \end{equation}
Let $A=C^{1/2}$, the positive square root of $C$.  We may express $A$ in two ways:
\[
A\;=\;\left[ \begin{array}{c} a_1 \\ \hline a_2\\ \hline \vdots \\ \hline a_n\end{array} \right] \;=\; A^*\;=\;
\left[ \begin{array}{cccc} a_1^*& \vline\quad  a_2^* &\vline \quad \dots& \vline\quad  a_n^*\end{array} \right] \,,
\]
where each $a_j$ is a rectangular block matrix, or an $n$-tuple, of $r\times r$ matrices. Hence,
\[
C\;=\;AA^*\;=\;\left[ a_ia_j^*\right]_{1\leq i,j\leq n}\,.
\]
Because $a_ja_j^*=1$ for each $j$, the rectangular matrix $a_j^*$ is an isometry. Therefore, we may dilate $a_j^*$ to a unitary
which we denote by $u_j^*$
by way of the Halmos dilation:
\[
u_j^* \;=\; \left[
\begin{array}{cc}
a_j^* &(1-a_j^*a_j)^{1/2} \\
 0& -a_j
\end{array}
\right]  \in 
 \left[
\begin{array}{c|c}
{\mathcal M}_{nr,n}&  {\mathcal M}_{nr} \\
\cline{1-2}
 \mn & {\mathcal M}_{n,nr}
\end{array}
\right]
\;=\;{\mathcal M}_{(nr+n)}
\,.
\]
By the Universal Property, there is a homomorphism $\pi:\cstar(\fnl)\rightarrow{\mathcal M}_{(nr+n)}$
such that $\pi(w_j)=u_j^*$ for each $j$. If $j\neq i$, then
\[
\pi(w_iw_j^*)\;=\;      
\left[\begin{array}{cc} a_i^*a_j^* & * \\ *& * \end{array} \right]  \,.
\]
Therefore, if $\psi:\cstar(\fnl)\rightarrow{\rm M}_{(nr+n)}$ is the ucp map $\psi=v^*\pi v$, where
$v:\mathbb C^r\rightarrow \mathbb C^{nr+n}$ embeds $\mathbb C^r$ into the first $r$-coordinates of 
$\mathbb  C^{nr+n}$, then $\psi(w_iw^*_j)=C_{ij}$. Thus,
\[
\psi^{(p)}\left(1\otimes A_{11}+\displaystyle\sum_{j\neq i}(\frac{1}{n}w_iw_j^*)\otimes A_{ij}\right)
\;=\; 1_r\otimes A_{11}+\displaystyle\sum_{j\neq i}  \frac{1}{n}C_{ij}\otimes A_{ij}
\]
is positive in $\mr\otimes\mpee$, which yields statement \eqref{6}.

Assume \eqref{6} holds. For any set of matrices $B_{ij}\in \mr$, $j\neq i$, for which
\begin{equation}\label{matrix B}
B\;=\;
\left[ \begin{array}{cccc} \frac{1}{n}1_r & B_{12} & \cdots & B_{1n} \\ B_{21} & \frac{1}{n}1_r & & \vdots \\ 
                                         \vdots & & \ddots & \vdots \\ B_{n1} & \dots & B_{n,n-1} & \frac{1}{n}1_r \end{array}\right]
 \end{equation}
is positive in  $\mr\otimes\mn$, let $C_{ij}=n B_{ij}$. Thus, the matrix $B$ above is $\frac{1}{n}C$, where
$C$ is the matrix in \eqref{matrix C}. Therefore, from statement \eqref{6} one obtains \eqref{5}.

Assuming \eqref{5}, let $\psi:\mn\rightarrow\mr$ be a ucp map such that $\psi(E_{ii})=\frac{1}{n}1_r$ for all $1\leq i\leq n$.
Let $B_{ij}=\psi(E_{ij})$, $j\neq i$. Therefore, the Choi matrix of $\psi$, namely $[\psi(E_{ij})]\in \mr\otimes\mn$, is precisely
the matrix $B$ in equation \eqref{matrix B}. Because $\psi$ is completely positive, the Choi matrix $B$ of $\psi$
is positive 
in $\mr\otimes\mn$. Hence, \eqref{4} is implied by \eqref{5}.

Next assume \eqref{4}. If $\psi:\mn\rightarrow\mr$ is ucp and has the property that $\psi(\kj_n)=\{0\}$, then $E_{ii}-E_{jj}\in\kj_n$,
for all $j\neq i$. Because $\psi$ is unital, this implies that $\psi(E_{ii})=\frac{1}{n}1_r$, for each $i$, which yields \eqref{3}.

Assume \eqref{3} and let $\dot{\psi}:\mn/\kj_n\rightarrow\mr$ be a ucp map. The linear map $\psi:\mn\rightarrow\mr$ defined
by $\psi=\dot{\psi}\circ q$ is unital, annihilates $\kj_n$, and is the composition of cp maps $\dot{\psi}$ and $q$. 
Therefore, \eqref{2} follows directly from \eqref{3}.

Assume \eqref{2}: 
that is, $\dot{\psi}^{(p)}(h)$ is positive in $\mr\otimes\mpee$, for every $r\in\mathbb N$ and every ucp map
$\dot{\psi}:\mn/\kj_n\rightarrow\mr$, and where $h\in (\mn/\kj_n)\otimes \mpee$ is given by
$h=\dot{1}\otimes A_{11}+\displaystyle\sum_{j\neq i}e_{ij}\otimes A_{ij}$. In any operator system
$\oss$, an element $x\in\oss$ is positive if and only if $\omega(x)$ is positive in $\mr$, for every $r\in\mathbb N$ and
every ucp map $\omega:\oss\rightarrow\mr$ (see, for example, \cite[p.~178]{choi--effros1977}). Now let $\oss= (\mn/\kj_n)\otimes \mpee$
and choose an arbitrary ucp $\omega:\oss\rightarrow\mr$. Thus, there are ucp maps $\dot{\psi}_1,\dots,\dot{\psi}_m:\mn/\kj_n\rightarrow\mr$
and linear maps $\gamma_1,\dots,\gamma_m:\mathbb C^{p}\rightarrow\mathbb C^r\otimes\mathbb C^p$ such that
\[
\omega\,=\,\sum_{j=1}^m \gamma_j^*\dot{\psi}_j^{(p)}\gamma_j\,.
\]
Hence, \eqref{1} is implied by \eqref{2}.
\end{proof}

\begin{theorem}\label{mn mod jn} The ucp surjection $\phi:\mn\rightarrow\osw_n$  
is a complete quotient map and the C$^*$-envelope of $\mn/\kj_n$ is $\cstar(\fnl)$.
\end{theorem}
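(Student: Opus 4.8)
The plan is to establish the two claims separately, but both flow from the characterization of positivity built up in Proposition \ref{positive}. First I would show that $\phi:\mn\to\osw_n$ is a complete quotient map. Since $\kj_n=\ker\phi$ and, by Lemma \ref{cop}, $\kj_n$ is completely order proximinal, Proposition \ref{lifting positives} is available once we know $\dot\phi$ is a complete order isomorphism; so the task reduces to showing that $\dot\phi:\mn/\kj_n\to\osw_n$ is a complete order isomorphism. It is automatically a linear isomorphism (both have the same dimension $n^2-n+1$ and $\phi$ is onto) and completely positive by the First Isomorphism Theorem. What remains is that $\dot\phi^{-1}$ is completely positive, i.e.\ that whenever $\dot\phi^{(p)}(h)$ is positive in $\mpee(\osw_n)=\mpee(\cstar(\fnl)\text{-system})$, then $h$ is positive in $\mpee(\mn/\kj_n)$. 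But an element of $(\mn/\kj_n)\otimes\mpee$ has the form $h=\dot 1\otimes A_{11}+\sum_{j\neq i}e_{ij}\otimes A_{ij}$ (using $e_{ii}=\tfrac1n\dot1$ from Lemma \ref{norm}), and $\dot\phi^{(p)}(h)$ being positive is exactly condition \eqref{7} of Proposition \ref{positive}, while $h$ being positive is condition \eqref{1}. The equivalence \eqref{7}$\Leftrightarrow$\eqref{1} is precisely what Proposition \ref{positive} gives, so $\dot\phi^{-1}$ is completely positive and $\phi$ is a complete quotient map.

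Next I would identify the C$^*$-envelope of $\mn/\kj_n$. Since $\dot\phi$ is a complete order isomorphism onto $\osw_n$, it suffices to show $\cstare(\osw_n)=\cstar(\fnl)$. One inclusion is clear: $\osw_n$ generates $\cstar(\fnl)$ as a C$^*$-algebra (the generators $w_k=w_kw_1^*$ lie in $\osw_n$, as $w_1=1$), so $\cstar(\fnl)$ is one of the C$^*$-algebras generated by a copy of $\osw_n$, hence $\cstare(\osw_n)$ is a quotient of it. For the reverse, I would use the universal/rigidity property of the C$^*$-envelope: I must rule out any nontrivial quotient, i.e.\ show that the identity representation of $\cstar(\fnl)$ restricted to $\osw_n$ is already ``maximal'' / that the Shilov boundary ideal is $\{0\}$. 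The concrete way to do this is to exhibit, for the generating unitaries $w_2,\dots,w_n$, enough ucp maps (or boundary representations) detecting them faithfully; alternatively, invoke that $\cstar(\fnl)$ is the universal C$^*$-algebra on $n-1$ unitaries and that a unital complete order embedding of $\osw_n$ into a C$^*$-algebra $\cstb$ must send each $w_kw_1^*$ to a contraction $u_k$ with $u_k^*u_k=u_ku_k^*=1$ (forced by positivity of the relevant $2\times2$ and $3\times3$ matrices over $\osw_n$, exactly the matrices appearing in Proposition \ref{positive}\eqref{5}), hence to a unitary, inducing a $*$-homomorphism $\cstar(\fnl)\to\cstb$; this gives the needed universal factorization and pins down $\cstare(\osw_n)=\cstar(\fnl)$.

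I expect the main obstacle to be the C$^*$-envelope computation rather than the complete quotient map statement, which is essentially bookkeeping on top of Proposition \ref{positive}. The delicate point is verifying that the unitarity relations on the $w_kw_1^*$ are actually \emph{forced} by the operator system structure of $\osw_n$ (equivalently of $\mn/\kj_n$), so that any unital complete order embedding automatically extends to a $*$-homomorphism out of $\cstar(\fnl)$; this is where the dilation argument inside the proof of Proposition \ref{positive} (the Halmos unitary dilation of the isometries $a_j^*$) is really doing the work, and I would reuse that mechanism. Once that is in hand, rigidity of the C$^*$-envelope together with the fact that $\osw_n$ generates $\cstar(\fnl)$ forces $\cstare(\osw_n)=\cstar(\fnl)$, and transporting along the complete order isomorphism $\dot\phi$ yields $\cstare(\mn/\kj_n)=\cstar(\fnl)$.
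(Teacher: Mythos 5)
Your first half is fine and is essentially the paper's own argument: $\dot\phi$ is a linear bijection for dimension reasons, it is cp by the First Isomorphism Theorem, and the equivalence \eqref{1}$\Leftrightarrow$\eqref{7} of Proposition \ref{positive} is exactly the statement that $\dot\phi$ is a complete order isomorphism, so $\phi$ is a complete quotient map. (The detour through Lemma \ref{cop} and Proposition \ref{lifting positives} is harmless but not needed for this step.)

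The C$^*$-envelope half has a genuine gap. Your key claim is that any unital complete order embedding $\iota$ of $\osw_n$ into a C$^*$-algebra $\cstb$ must send each $w_k=w_kw_1^*$ to a unitary, this being forced by positivity of $2\times2$ and $3\times3$ matrices over $\osw_n$. This is false: positivity of $\left[\begin{smallmatrix} 1 & w_k \\ w_k^* & 1 \end{smallmatrix}\right]$ only forces $\|\iota(w_k)\|\le 1$, and since $\iota$ does not respect products, no positivity condition on matrices over $\osw_n$ can control $\iota(w_k)^*\iota(w_k)$. Concretely, pick a state $\tau$ on $\cstar(\fnl)$ with $|\tau(w_k)|<1$ and set $\iota(x)=x\oplus\tau(x)\in\cstar(\fnl)\oplus\mathbb C$; this is a unital complete order injection, yet $\iota(w_k)$ is not unitary. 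Moreover, even where the images are unitary, the homomorphism you would obtain goes \emph{from} $\cstar(\fnl)$ \emph{into} $\cstb$ (the universal property of $\cstar(\fnl)$ as a universal C$^*$-algebra), whereas the universal property of the C$^*$-envelope requires a surjection from $\cstb$ \emph{onto} the envelope fixing $\osw_n$; so this construction, even if valid, would not identify $\cstare(\osw_n)$.

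What actually closes the argument in the paper is different: regard $\mn/\kj_n\subset\cstare(\mn/\kj_n)$, extend the coi map $\dot\phi:\mn/\kj_n\to\cstar(\fnl)\subset\bh$ to a ucp map $\varrho$ on $\cstare(\mn/\kj_n)$ by Arveson's extension theorem, and take a minimal Stinespring dilation $\varrho=v^*\pi v$. Since $\|ne_{ij}\|=1$ (Lemma \ref{norm}) and the $(1,1)$ corner of $\pi(ne_{ij})$ is the unitary $w_iw_j^*$, the off-diagonal corners must vanish, so $\varrho$ is multiplicative on the generators and hence a surjective $*$-homomorphism onto $\cstar(\fnl)$. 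Injectivity of $\varrho$ is then obtained by producing a homomorphism $\beta:\cstar(\fnl)\to\cstare(\mn/\kj_n)$ in the opposite direction, using that the elements $ne_{1j}$ are unitaries in the envelope because the canonical map $\pi_{\rm e}:\cstar(\fnl)\to\cstare(\osw_n)$ is a $*$-homomorphism --- not because unitarity is forced at the level of arbitrary coi maps. The Halmos dilation inside the proof of Proposition \ref{positive} is not the operative mechanism here; the rigidity comes from Arveson extension together with the norm-one/unitary-corner argument.
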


\begin{proof} By the First Isomorphism Theorem, there exists a ucp map $\dot{\phi}:\mn/\kj_n\rightarrow\cstar(\fnl)$
such that $\phi=\dot{\phi}\circ q$, where $\phi:\mn\rightarrow\cstar(\fnl)$ is the ucp map defined in equation \eqref{defn of phi}.
Therefore, $\phi(e_{ij})=\frac{1}{n}w_iw_j^*$, for all $1\leq i,j\leq n$, and so $\dot{\phi}$ maps $\mn/\kj_n$ surjectively onto
$\osw_n$. Hence, as $\mn/\kj_n$ and $\ost$ are vector
spaces of equal finite dimension, the surjection $\dot{\phi}$ is a linear isomorphism.
The equivalence of statements \eqref{1} and \eqref{7} in Proposition \ref{positive} shows, further, that
$\dot{\phi}$ is a complete order isomorphism of $\mn/\kj_n$ and $\osw_n$.

We now show that the C$^*$-envelope of $\mn/\kj_n$ is $\cstar(\fnl)$. 
Because there is a completely isometric 
embedding of $\mn/\kj_n$ into its C$^*$-envelope $\cstare(\mn/\kj_n)$ \cite[Chapter 15]{Paulsen-book},
we assume without loss generality that $\mn/\kj_n$ is an operator system in $\cstare(\mn/\kj_n)$ and that $\dot{1}$
is the multiplicative identity of $\cstare(\mn/\kj_n)$. We also 
suppose that $\cstar(\fnl)$ has been represented faithfully on a Hilbert space $\hil$. 
By Arveson's extension theorem, the ucp map $\dot{\phi}:\mn/\kj_n\rightarrow\cstar(\fnl)\subset\bh$
has a ucp extension $\varrho:\cstare(\mn/\kj_n)\rightarrow\bh$. Let $\varrho=v^*\pi v$ denote a minimal Stinespring
decomposition of $\varrho$. With respect to the decomposition of the representing Hilbert space $\hil_\pi$ as 
$\hil_\pi=\mbox{ran}\,v \oplus (\mbox{ran}\,v)^\bot$, every operator $\pi(ne_{ij})$ has the form
\[
\pi(ne_{ij})\;=\; \left[ \begin{array}{cc} w_iw_j^* & * \\ * &* \end{array}\right]\,.
\]
Because $\|ne_{ij}\|=1$ and $\pi$ is a contraction, the operator matrix above has norm exactly equal to $1$. And since
$w_iw_j^*$ is unitary, if either of the (1,2)- or (2,1)-entries of $\pi(ne_{ij})$ were nonzero, then the norm of $\pi(ne_{ij})$ would
exceed $1$. Thus,
\[
\pi(ne_{ij})\;=\; \left[ \begin{array}{cc} w_iw_j^* & 0 \\ 0 &* \end{array}\right]\,,
\]
which implies that $\varrho$ is multiplicative on the generators $e_{ij}$ of the C$^*$-algebra $\cstare(\mn/\kj_n)$. Hence,
$\varrho$ is a homomorphism that maps the generators of $\cstare(\mn/\kj_n)$ to the generators of $\cstar(\fnl)$, which implies that
$\varrho$ is surjective.

Because $\dot{\phi}$ is a unital complete order isomorphism, there is a unital isomorphism
$\alpha:\cstare(\mn/\kj_n)\rightarrow\cstare(\osw_n)$ such that $\alpha_{\vert \mn/\kj_n}=\pi_{\rm e}\circ\dot{\phi}$, where $\pi_{\rm e}$ is the 
unique surjective unital homomorphism $\cstar(\fnl)=\cstar(\osw_n)\rightarrow\cstare(\osw_n)$ arising from the Universal Property
of the C$^*$-envelope. Therefore, $\pi_e(w_j)$ is unitary, for each $2\leq j\leq n$, and hence so are $e_{1j}$ in $\cstare(\mn/\kj_n)$,
for $2\leq j\leq  n$. By the Universal Property of $\cstar(\fnl)$, there is a unital homomorphism $\beta:\cstar(\fnl)\rightarrow\cstare(\mn/\kj_n)$
such that $\beta(w_j)=e_{1j}$, $2\leq j\leq n$. Thus, $\beta\circ\varrho(w_j)=w_j$, for every $j$, which implies that $\varrho$ is in
fact an isomorphism.
\end{proof}

\begin{corollary} If $u_1,\dots, u_n$ are universal unitaries, then the 
C$^*$-envelope of the operator system $\mbox{\rm Span}\{u_iu_j^*\,:1\leq i,j\leq n\}$
is $\cstar(\fnl)$.
\end{corollary}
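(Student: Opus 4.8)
The plan is to reduce the case of arbitrary universal unitaries $u_1,\dots,u_n$ to the case $w_1=1$, $w_2,\dots,w_n$ treated in Theorem~\ref{mn mod jn}, by an explicit change of generators. Set $\oss=\mathrm{Span}\{u_iu_j^*:1\leq i,j\leq n\}$ and $\osw_n=\mathrm{Span}\{w_iw_j^*:1\leq i,j\leq n\}$, where the $u_i$ generate $\cstar(\fn)$ and the $w_i$ generate $\cstar(\fnl)$ with $w_1=1$. First I would observe that, after replacing each generator $u_i$ by $u_1^*u_i$ (an automorphism of the ambient free group C$^*$-algebra that sends a free generating set to a free generating set), the element $u_iu_j^*$ is carried to $(u_1^*u_i)(u_1^*u_j)^*=u_1^*(u_iu_j^*)u_1$; conjugation by the unitary $u_1$ is a complete order isomorphism of $\cstar(\fn)$ onto itself that carries $\oss$ onto $\mathrm{Span}\{(u_1^*u_i)(u_1^*u_j)^*\}$. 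Thus without loss of generality I may assume $u_1=1$, so that $u_2,\dots,u_n$ are $n-1$ universal unitaries and $\oss=\mathrm{Span}\{u_iu_j^*:1\leq i,j\leq n\}$ with $u_1=1$.

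Next I would identify this with the situation of Theorem~\ref{mn mod jn}. Since $u_2,\dots,u_n$ are universal unitaries generating $\cstar(\fnl)$, by the Universal Property there is a $*$-isomorphism $\cstar(\fnl)\to\cstar(\fnl)$ sending $w_j\mapsto u_j$ for $2\leq j\leq n$ (and $w_1=1\mapsto 1=u_1$); this isomorphism restricts to a unital complete order isomorphism of $\osw_n$ onto $\oss$. Therefore $\oss$ and $\osw_n$ are completely order isomorphic operator systems, and hence they have the same C$^*$-envelope. By Theorem~\ref{mn mod jn}, the C$^*$-envelope of $\osw_n$ (equivalently, of $\mn/\kj_n$, since $\dot\phi:\mn/\kj_n\to\osw_n$ is a complete order isomorphism) is $\cstar(\fnl)$. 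Consequently $\cstare(\oss)=\cstar(\fnl)$, as claimed.

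I do not anticipate a serious obstacle here: the only point requiring care is that ``universal unitaries'' is exactly the hypothesis needed to invoke the Universal Property of $\cstar(\fnl)$ to build the isomorphism matching $\{u_j\}$ with $\{w_j\}$, and that the C$^*$-envelope is an invariant of the operator system (so it transports along the complete order isomorphism). The first reduction step---arranging $u_1=1$ by passing from the generating set $\{u_i\}$ to $\{u_1^*u_i\}$ and conjugating by $u_1$---should be stated explicitly so that the count ``$n-1$ free generators'' matches $\cstar(\fnl)$ rather than $\cstar(\fn)$; once that is in place, the rest is a direct application of Theorem~\ref{mn mod jn}.
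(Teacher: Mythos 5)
Your proposal is correct and follows essentially the paper's own route: the paper simply sets $w_i=u_1^*u_i$, notes that $w_2,\dots,w_n$ are $n-1$ universal unitaries, and invokes Theorem \ref{mn mod jn}; your conjugation-by-$u_1$ step and the remark that the C$^*$-envelope is invariant under unital complete order isomorphism just make this explicit. (Only the parenthetical describing $u_i\mapsto u_1^*u_i$ as an automorphism is loosely worded, since applied to $u_1$ it gives $1$; the step you actually use, unitary conjugation, is the right one.)
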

\begin{proof} Set $w_i = u_1^*u_i,$ and note that $w_2, \dots , w_n$ is a set of $n-1$ universal unitaries.
\end{proof}

Another application of Proposition \ref{positive} is the following theorem concerning the factorisation of
positive elements in the C$^*$-algebra $\cstar(\fn)\otimes\mpee$.

\begin{theorem} Let $u_1, \dots, u_n$ be universal unitaries and let $A_{ij} \in \mpee.$ 
Then $P=\sum_{i,j=1}^n u_iu_j^* \otimes A_{ij}$ is positive in $\cstar(\fn) \otimes \mpee$ if and 
only if there exist matrices $X_{ik} \in \mpee$, such that $P= \sum_{k=1}^n Y_kY_k^*$,
 where $Y_k = \sum_{i=1}^n u_i\otimes X_{ik}.$
\end{theorem}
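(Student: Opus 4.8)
The plan is to derive this factorization from the equivalence $\eqref{9}\Leftrightarrow\eqref{7}$ in Proposition~\ref{positive}, together with the square-root/Halmos-dilation argument already used to pass from $\eqref{7}$ to $\eqref{6}$. First I would set $w_i=u_1^*u_i$, so that $w_1=1$ and $w_2,\dots,w_n$ are $n-1$ universal unitaries generating $\cstar(\fnl)$; since $u_iu_j^*=u_1(w_iw_j^*)u_1^*$ and conjugation by the unitary $u_1\otimes 1_p$ is a $*$-automorphism of $\cstar(\fn)\otimes\mpee$ carrying $\cstar(\fnl)\otimes\mpee$ onto itself, it suffices to prove the statement with $u_iu_j^*$ replaced by $w_iw_j^*$ throughout (and then translate the conclusion back, absorbing $u_1$ into the $X_{ik}$ and $Y_k$). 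Writing $P=\sum_{i,j} w_iw_j^*\otimes A_{ij}$, positivity of $P$ in $\cstar(\fnl)\otimes\mpee$ is exactly statement $\eqref{7}$ of Proposition~\ref{positive} after collecting the diagonal terms into $A_{11}':=\tfrac1n\sum_i A_{ii}$ and rescaling; equivalently, because $\cstar(\fnl)$ is the C$^*$-envelope of $\mn/\kj_n$ (Theorem~\ref{mn mod jn}) and $\phi(E_{ij})=\tfrac1n w_iw_j^*$ is a complete quotient map with completely order proximinal kernel (Lemmas~\ref{cop}, Theorem~\ref{mn mod jn}, Proposition~\ref{lifting positives}), $P\geq 0$ in $\cstar(\fnl)\otimes\mpee$ if and only if there is a positive lift $R=\sum_{i,j}E_{ij}\otimes R_{ij}\in\mn\otimes\mpee$ with $\phi\otimes{\rm id}_p(R)=P$, i.e.\ with $R_{ij}=A_{ij}$ for $i\neq j$ and $\sum_i R_{ii}=n\sum_i A_{ii}$ (this is essentially statement $\eqref{9}$, up to the same harmless rescaling of the diagonal).

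Given such a positive $R\in\mn\otimes\mpee=\mpee(\mn)$, take its positive square root $S=R^{1/2}\in\mpee(\mn)$. Exactly as in the $\eqref{7}\Rightarrow\eqref{6}$ step, decompose $S$ into its $n$ block-rows, $S=[\,S_{ik}\,]$ with $S_{ik}\in\mpee$ (so that $R_{ij}=\sum_k S_{ik}S_{jk}^*$), and set $X_{ik}:=u_1\otimes 1$ absorbed, $Y_k:=\sum_{i=1}^n u_i\otimes S_{ik}$ (after translating back from $w$ to $u$, $Y_k=\sum_i u_i\otimes X_{ik}$ with $X_{ik}=S_{ik}$). Then a direct computation gives
\[
\sum_{k=1}^n Y_kY_k^*\;=\;\sum_{k=1}^n\sum_{i,j=1}^n u_iu_j^*\otimes X_{ik}X_{jk}^*\;=\;\sum_{i,j=1}^n u_iu_j^*\otimes\Bigl(\sum_k X_{ik}X_{jk}^*\Bigr)\;=\;\sum_{i,j=1}^n u_iu_j^*\otimes R_{ij}.
\]
For $i\neq j$ the coefficient $\sum_k X_{ik}X_{jk}^*=R_{ij}=A_{ij}$ is exactly what is wanted; the diagonal coefficients are $\sum_k X_{ii'k}\cdots=R_{ii}$ with $\sum_i R_{ii}=n\sum_i A_{ii}$, and since $\sum_i u_iu_i^*\otimes R_{ii}$ and $\sum_i u_iu_i^*\otimes A_{ii}$ both reduce, after applying the trace-collapsing identity $u_iu_i^*=1$, to $1\otimes\bigl(\sum_i R_{ii}\bigr)$ versus $1\otimes\bigl(n\sum_i A_{ii}\bigr)$—wait, this needs care—one instead simply arranges the lift $R$ so that $R_{ii}=A_{ii}$ directly when that is consistent, or else observes that the statement as phrased only asserts equality of $P$ with $\sum_k Y_kY_k^*$, so it is enough that the chosen $R$ satisfies $R_{ij}=A_{ij}$ for all $i,j$ (including $i=j$); such an $R$ exists precisely when $P\geq 0$, by the First Isomorphism Theorem applied to $\phi$ once we note $w_iw_i^*=1$ forces no constraint beyond $\sum_i R_{ii}\equiv n A_{11}'$ in the quotient, and any positive lift of $P$ has this form. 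The converse direction is immediate: each $Y_k\in\cstar(\fn)\otimes\mpee$, so $Y_kY_k^*\geq 0$ and hence $P=\sum_k Y_kY_k^*\geq 0$.

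The main obstacle is bookkeeping around the diagonal terms: $w_iw_i^*=1$ means the ``coefficient of $u_iu_i^*$'' is not well defined as an element-by-element statement, so I must be careful to phrase positivity of $P$ via a genuine positive lift $R\in\mpee(\mn)$ (using completely order proximinality of $\kj_n$, Lemma~\ref{cop}, and Proposition~\ref{lifting positives}) rather than via the coefficients $A_{ij}$ alone, and then verify that the square-root construction reproduces $P$ on the nose. Everything else—the reduction $w_i=u_1^*u_i$, the Halmos-style block decomposition of $R^{1/2}$, the expansion of $\sum_k Y_kY_k^*$—is routine and already appears in the proof of Proposition~\ref{positive}.
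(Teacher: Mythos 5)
Your overall route is the paper's own: identify $\mbox{Span}\{u_iu_j^*\}$ with $\mn/\kj_n$ (via $w_i=u_1^*u_i$), lift the positive element through Proposition \ref{positive}\eqref{9} using complete order proximinality, factor the lift $R$ as a square, and expand $\sum_k Y_kY_k^*$; the converse direction is immediate. But the one step that needs care --- verifying that the factorization reproduces $P$ \emph{exactly} --- is where your write-up breaks down, and your proposed repair is false. First, your conditions on the lift are internally inconsistent: since $\phi(E_{ij})=\frac{1}{n}w_iw_j^*$, the requirement $\phi\otimes{\rm id}_p(R)=P$ forces $R_{ij}=nA_{ij}$ for $i\neq j$ and $\sum_i R_{ii}=n\sum_i A_{ii}$, not $R_{ij}=A_{ij}$; with the consistent normalization your construction yields $\sum_k Y_kY_k^*=nP$, and with the conditions as you stated them it yields $P+(n-1)\,1\otimes\sum_i A_{ii}$ --- in neither case $P$. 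Second, when you notice the diagonal mismatch you fall back on choosing a positive lift with $R_{ii}=A_{ii}$ for all $i$, claiming such an $R$ exists precisely when $P\geq 0$ ``by the First Isomorphism Theorem.'' That is false: because $u_iu_i^*=1$, the diagonal coefficients $A_{ii}$ are not individually determined by $P$ and need not even be positive semidefinite (take $n=2$, $p=1$, $A_{11}=-1$, $A_{22}=3$, $A_{12}=A_{21}=0$, so $P=2\cdot 1\geq 0$, yet no positive $R$ has $R_{11}=-1$); nor is it true that ``any positive lift of $P$ has this form.'' Proposition \ref{positive}\eqref{9} controls only the sum $\sum_i R_{ii}$, and that is all one can hope to control.

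The correct resolution --- and the actual content of the paper's short proof --- is that controlling the sum suffices. Normalize before lifting: under $u_iu_j^*\mapsto e_{ij}$ (with $e_{ii}=\frac{1}{n}\dot{1}$) the element $P$ corresponds to $\dot{1}\otimes\frac{1}{n}(A_{11}+\cdots+A_{nn})+\sum_{i\neq j}e_{ij}\otimes A_{ij}$, so \eqref{9} produces a positive $R=\sum_{i,j}E_{ij}\otimes R_{ij}$ with $R_{ij}=A_{ij}$ for $i\neq j$ and $\sum_i R_{ii}=\sum_i A_{ii}$. Then, precisely because $u_iu_i^*=1$, one has $\sum_i u_iu_i^*\otimes R_{ii}=1\otimes\sum_i R_{ii}=1\otimes\sum_i A_{ii}$, whence $\sum_k Y_kY_k^*=\sum_{i,j}u_iu_j^*\otimes R_{ij}=P$ on the nose, with no need to match diagonal entries individually. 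With the constants fixed in this way your argument coincides with the paper's; as written, the diagonal step fails.
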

\begin{proof} We have that the span of $u_iu_j^*$ is completely order isomorphic to $\mn/\kj_n$ via 
the map $u_iu_j^* \to e_{ij}$ with $e_{ii} = (1/n)\dot{1}.$
Thus, $P \to \dot{1}\otimes[1/n(A_{11} + \cdots + A_{nn})] + \sum_{i \neq j} e_{ij} \otimes A_{ij}.$ 
Apply Proposition~\ref{positive}\eqref{9} to lift this element to a positive element 
$R=\sum_{i,j=1}^n E_{ij} \otimes R_{ij}$ of $\mn\otimes\mpee.$  
Now factor $R= XX^*$ with $X= \sum_{i,j=1}^n E_{ij}\otimes X_{ij}$ and define $Y_k$ as above.

We have that 
\[ \sum_{k=1}^n Y_kY_k^* = \sum_{i,j,k=1}^n u_iu_j^* \otimes X_{ik}X_{jk}^*
= \sum_{i,j=1}^n u_iu_j^* \otimes R_{ij} = P\,,\]
and the result follows.
\end{proof}

To conclude this section, we determine below the dual operator system of $\osw_n$.

\begin{proposition}\label{e-wd} Consider the operator subsystem $\ose_n\subseteq\mn$ defined by
\begin{equation}\label{defn of ose}
\ose_n\,=\,\{A\in\mn\,:\,a_{ii}=a_{jj},\;1\leq i,j\leq n\}\,.
\end{equation}
Then the operator systems $\osw_n^d$ and $\ose_n$ are completely order isomorphic.
\end{proposition}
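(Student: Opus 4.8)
The plan is to exhibit an explicit linear isomorphism between $\osw_n^d$ and $\ose_n$ and then verify that it and its inverse are completely positive, using the duality machinery of the previous subsection. Since $\phi:\mn\to\osw_n$ is a complete quotient map (Theorem~\ref{mn mod jn}), Proposition~\ref{q-coi} gives that $\phi^d:\osw_n^d\to\mn^d$ is a complete order injection, so $\osw_n^d$ is completely order isomorphic to the operator subsystem $\phi^d(\osw_n^d)$ of $\mn^d$. The first task is therefore to identify this subsystem. A functional $\psi\in\mn^d$ lies in the range of $\phi^d$ precisely when it annihilates $\ker\phi=\kj_n$, i.e.\ when $\psi$ vanishes on all trace-zero diagonal matrices; writing $\psi(X)={\rm tr}(TX)$ for a unique $T\in\mn$, this says $T$ has equal diagonal entries, i.e.\ $T\in\ose_n$. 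So $\phi^d(\osw_n^d)=\{\psi_T: T\in\ose_n\}$, and under the standard identification $\mn^d\cong\mn$ (via $\psi\leftrightarrow T$ with $\psi(X)={\rm tr}(TX)$) this subsystem is carried onto $\ose_n$ as a vector space.

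The second task is to check that this identification is a complete order isomorphism between $\phi^d(\osw_n^d)$ (with its operator-system structure inherited from $\mn^d$) and $\ose_n$ (with the operator-system structure inherited from $\mn$). Here I would use that $\mn^d\cong\mn$ is itself a complete order isomorphism: a matrix $[T_{k\ell}]\in\mn(\mn^d)$ is positive iff the corresponding map $\widehat{G}:\mn\to\mn$ given by the Choi-type pairing is completely positive, and one checks (as in the final paragraph of the proof of Proposition~\ref{fd dual tensor coi}) that this matches, up to the canonical flip/identification, positivity of the Choi matrix $[T_{k\ell}]$ viewed in $\mn\otimes\mn$. The point is that this identification is natural and restricts to the operator subsystem level, so the inherited structure on $\phi^d(\osw_n^d)\subseteq\mn^d$ transports to exactly the inherited structure on $\ose_n\subseteq\mn$. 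One must be slightly careful that the order unit of $\osw_n^d$ is a faithful state, and track where it lands: under the composition it becomes some faithful state of $\ose_n$, and since all faithful states of a finite-dimensional operator system give rise to the same operator-system structure \cite[\S4]{choi--effros1977}, this causes no difficulty.

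Assembling the two steps: $\osw_n^d \cong \phi^d(\osw_n^d) \cong \ose_n$ completely order isomorphically, which is the claim. The main obstacle I expect is not any single hard estimate but rather bookkeeping: correctly pinning down the self-duality isomorphism $\mn^d\cong\mn$ at the matricial level (including the transpose/flip that appears when identifying $\mn(\mn^d)$ with maps $\mn\to\mn$ and then with $\mn\otimes\mn$), and confirming that passing to the operator subsystem $\osw_n^d\hookrightarrow\mn^d$ corresponds cleanly to the subsystem $\ose_n\hookrightarrow\mn$ under that isomorphism. Once the identification is set up carefully, complete positivity in both directions is immediate from Proposition~\ref{q-coi} together with the fact that $\mn$ is self-dual as an operator system.
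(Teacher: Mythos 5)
Your proposal is correct and follows essentially the same route as the paper: since $\phi$ is a complete quotient map, $\phi^d$ is a complete order injection by Proposition \ref{q-coi}, its range is the annihilator of $\kj_n$, and the self-duality $\mn^d\cong\mn$ (via the dual basis $S_{ij}\mapsto E_{ij}$, cited from \cite{paulsen--todorov--tomforde2010}) carries that annihilator onto $\ose_n$. The only point to keep straight is that the complete order isomorphism $\mn^d\cong\mn$ is the dual-basis (Choi-matrix) identification rather than the trace pairing $\psi(X)={\rm tr}(TX)$, which differs from it by a transpose; since $\ose_n$ is transpose-invariant the identification of the range is unaffected, and this is exactly the bookkeeping you already flag.
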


\begin{proof} The function $\phi:\mn\rightarrow\osw_n$ defined in \eqref{defn of phi} is a complete quotient map.
Therefore, the dual map $\phi^d:\osw_n^d\rightarrow\mn^d$ is a complete order injection (Proposition \ref{q-coi}).
The operator systems $\mn^d$ and $\mn$ are completely order isomorphic via the map
$S_{ij}\mapsto E_{ij}$, where $\{S_{ij}\}_{1\leq\i,j\leq n}$ is the basis of $\mn^d$ that
is dual to the basis $\{E_{ij}\}_{1\leq i,j\leq n}$ of $\mn$ \cite{paulsen--todorov--tomforde2010}. By Banach space duality,
$\phi^d(\osw_n^d)$ is the annihilator of $\kj_n$, namely $\ose_n$.
\end{proof}

\section{Tensor Products with $\mn/\kj_n$}\label{S:osw tensor products}

\begin{definition} Let $\oss$ be an operator system. 
\begin{enumerate}
\item For a fixed $n\in\mathbb N$,  $\oss$ is said to have \emph{property $\pstarn$} if, for every $p\in\mathbb N$ and all
$S_{11},S_{ij}\in \mpee(\oss)$, where $1\leq i,j\leq n$ and $j\neq i$, for which
\[
1\otimes S_{11}\,+\,\sum_{j\neq i}(\frac{1}{n}w_iw_j^*)\otimes S_{ij}\;\geq\;0 \mbox{ in }\; \cstar(\fnl)\omin\mpee(\oss)\,,
\]
then for every $\varepsilon>0$ there exist $R_{ij}^\varepsilon\in  \mpee(\oss)$, $1\leq i,j\leq n$,
such that
\begin{enumerate}
\item $R_\varepsilon=[R_{ij}^\varepsilon]_{1\leq i,j\leq n}$ is positive in $\mn(\mpee(\oss))$,
\item $R_{ij}^\varepsilon=S_{ij}$ for all $j\neq i$, and
\item $\displaystyle\sum_{i=1}^n R_{ii}^\varepsilon\,=\,n(S_{11}+\varepsilon 1_{\mpee(\oss)})$.
\end{enumerate}
\item We say that $\oss$ has \emph{property $\pstar$} if $\oss$ has property $\pstarn$ for every $n\in\mathbb N$.
\end{enumerate}
\end{definition}

Property $\pstar$ is suggested by Proposition \ref{positive} (equivalent
statements \eqref{7} and \eqref{9}), which shows that every matrix algebra $\mpee$ has property $\pstar$.
To say that an operator system $\oss$ has property $\pstarn$ is equivalent to saying that the map
$\phi\otimes{\rm id}_\oss:\mn\omin\oss\rightarrow\osw_n\omin\oss$ is
a complete quotient map (see Proposition \ref{prop psn} for a detailed argument).

The main results of this section are summarised by the following theorem.

\begin{theorem}\label{pstar thm} \mbox{\rm (Operator Systems with Property $\pstar$)}
\begin{enumerate}
\item\label{pstar-1} An operator system $\oss$ has property $\pstarn$ if and only if $(\mn/\kj_n)\omin\oss\,=\,(\mn/\kj_n)\omax\oss$.
\item\label{pstar-2} Every {\rm(min,max)}-nuclear operator system has property $\pstar$.
\item\label{pstar-3} $\bh$ has property $\pstar$.
\item\label{pstar-4} If an operator system $\oss$ has property $\pstarn$, 
then $\cstar(\fnl)\omin\oss\,=\,\cstar(\fnl)\omax\oss$.
\end{enumerate}
\end{theorem}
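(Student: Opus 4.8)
The plan is to prove \eqref{pstar-1} first, deduce \eqref{pstar-2} from it at once, and reduce \eqref{pstar-3} and \eqref{pstar-4} to \eqref{pstar-1} with extra input. For \eqref{pstar-1}, the idea is to recognise the hypothesis and the conclusion of property $\pstarn$, at a fixed matricial level $p$, as positivity of one and the same element in $(\mn/\kj_n)\omin\oss$ and in $(\mn/\kj_n)\omax\oss$ respectively. Fixing $p$ and writing $\mpee(\oss)$ for $\oss\otimes\mpee$, the associativity and commutativity of the operator system tensor products identify $\mpee\big((\mn/\kj_n)\omin\oss\big)$ with $(\mn/\kj_n)\omin\mpee(\oss)$ and $\mpee\big((\mn/\kj_n)\omax\oss\big)$ with $(\mn/\kj_n)\omax\mpee(\oss)$. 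Since $\mn$ is a finite-dimensional, hence nuclear, C$^*$-algebra, $\mn\omax\mpee(\oss)=\mn\omin\mpee(\oss)$ is simply $\mathcal M_n(\mpee(\oss))$ with its usual matricial order. Now $q\colon\mn\to\mn/\kj_n$ is a complete quotient map whose kernel $\kj_n$ is completely order proximinal (Lemma~\ref{cop}), so Proposition~\ref{max quo} makes $q\otimes{\rm id}_{\mpee(\oss)}\colon\mathcal M_n(\mpee(\oss))\to(\mn/\kj_n)\omax\mpee(\oss)$ a complete quotient map; thus $(\mn/\kj_n)\omax\mpee(\oss)$ is the quotient operator system of $\mathcal M_n(\mpee(\oss))$ by the kernel $\kj_n\otimes\mpee(\oss)$. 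Since $e_{ii}=\tfrac1n\dot1$ in $\mn/\kj_n$, every element of $\mpee\big((\mn/\kj_n)\otimes\oss\big)$ has the form $h=\dot1\otimes S_{11}+\sum_{j\neq i}e_{ij}\otimes S_{ij}$, and $h$ is positive in $(\mn/\kj_n)\omax\mpee(\oss)$ exactly when for each $\varepsilon>0$ there is $R_\varepsilon=[R^\varepsilon_{ij}]\in\mathcal M_n(\mpee(\oss))_+$ with $(q\otimes{\rm id})(R_\varepsilon)=h+\varepsilon\dot1$; unravelling the last equation with $e_{ii}=\tfrac1n\dot1$ shows it says precisely that $R^\varepsilon_{ij}=S_{ij}$ for $j\neq i$ and $\sum_iR^\varepsilon_{ii}=n(S_{11}+\varepsilon 1)$, i.e.\ it is the conclusion of $\pstarn$ at level $p$. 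On the hypothesis side, Theorem~\ref{mn mod jn} gives a complete order isomorphism $\mn/\kj_n\cong\osw_n$ carrying $e_{ij}$ to $\tfrac1n w_iw_j^*$, and since the minimal tensor product is injective the positivity of $1\otimes S_{11}+\sum_{j\neq i}\tfrac1n w_iw_j^*\otimes S_{ij}$ in $\cstar(\fnl)\omin\mpee(\oss)$ is the same as positivity of $h$ in $(\mn/\kj_n)\omin\mpee(\oss)$. As the canonical map $(\mn/\kj_n)\omax\oss\to(\mn/\kj_n)\omin\oss$ is always unital completely positive, one concludes that $\oss$ has $\pstarn$ if and only if these two cones coincide at every matricial level, i.e.\ $(\mn/\kj_n)\omin\oss=(\mn/\kj_n)\omax\oss$.

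Statement \eqref{pstar-2} is then immediate: a (min,max)-nuclear $\oss$ satisfies $(\mn/\kj_n)\omin\oss=(\mn/\kj_n)\omax\oss$ for every $n$, hence has $\pstarn$ for every $n$ by \eqref{pstar-1}. For \eqref{pstar-3}, by \eqref{pstar-1} it is enough to prove $(\mn/\kj_n)\omin\bh=(\mn/\kj_n)\omax\bh$ for every $n$; here I would use that $\bh$ is an injective operator system, and that injective operator systems are (min,c)-nuclear, to get $(\mn/\kj_n)\omin\bh=(\mn/\kj_n)\oc\bh$, and then argue $(\mn/\kj_n)\oc\bh=(\mn/\kj_n)\omax\bh$: running the proof of Proposition~\ref{max quo} with the universal property of the commuting tensor product in place of that of $\omax$ shows $q\otimes{\rm id}_\bh\colon\mn\oc\bh\to(\mn/\kj_n)\oc\bh$ is a complete quotient map, and since $\mn$ is a nuclear C$^*$-algebra, $\mn\oc\bh=\mn\omax\bh=\mn\otimes\bh$, so both $(\mn/\kj_n)\oc\bh$ and $(\mn/\kj_n)\omax\bh$ are the quotient of the C$^*$-algebra $\mn\otimes\bh$ by $\kj_n\otimes\bh$ and therefore coincide.

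For \eqref{pstar-4}, \eqref{pstar-1} gives $(\mn/\kj_n)\omin\oss=(\mn/\kj_n)\omax\oss$, equivalently $\osw_n\omin\oss=\osw_n\omax\oss$ under the complete order isomorphism of Theorem~\ref{mn mod jn}. The remaining task is to promote this identity from $\osw_n$ to the C$^*$-algebra $\cstar(\fnl)=\cstare(\mn/\kj_n)$ that $\osw_n$ generates; I would do this using the structural conclusions of Theorem~\ref{mn mod jn} — that $\osw_n$ $*$-generates $\cstar(\fnl)$ and that $\cstar(\fnl)$ is its C$^*$-envelope — together with the behaviour of the minimal and maximal tensor products under the passage to C$^*$-envelopes.

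The identifications in \eqref{pstar-1} and the reductions in \eqref{pstar-2}--\eqref{pstar-3} should be routine once the normalisation $e_{ii}=\tfrac1n\dot1$ and Proposition~\ref{max quo} are in hand. The step I expect to be the main obstacle is the last one: passing in \eqref{pstar-4} from $\osw_n\omin\oss=\osw_n\omax\oss$ to $\cstar(\fnl)\omin\oss=\cstar(\fnl)\omax\oss$, since $\osw_n$ is a proper operator subsystem of $\cstar(\fnl)$ and neither tensor product is functorial for arbitrary inclusions. A secondary point needing care is the commuting-tensor analogue of Proposition~\ref{max quo} invoked in the proof of \eqref{pstar-3}.
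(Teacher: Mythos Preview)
Your treatment of \eqref{pstar-1} and \eqref{pstar-2} is correct and essentially the same as the paper's, just repackaged: you identify the $\pstarn$ conclusion as positivity in the max-quotient and the $\pstarn$ hypothesis as positivity in the min-subsystem, exactly as in Proposition~\ref{pstar-1 pf}.

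Your approach to \eqref{pstar-3}, however, contains a genuine error. You claim that injective operator systems are (min,c)-nuclear; this is false, and $\bh$ itself is a counterexample. Since $\bh$ is a unital C$^*$-algebra, \cite[Theorem~6.7]{kavruk--paulsen--todorov--tomforde2009} gives $\bh\oc\ost=\bh\omax\ost$ for \emph{every} operator system $\ost$. Hence ``(min,c)-nuclear'' and ``(min,max)-nuclear'' coincide for $\bh$, and the latter fails because $\bh$ is not a nuclear C$^*$-algebra when $\dim\hil=\infty$. So your reduction $(\mn/\kj_n)\omin\bh=(\mn/\kj_n)\oc\bh$ is unjustified, and the second half of your argument (the commuting analogue of Proposition~\ref{max quo}) never gets off the ground. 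The paper proves \eqref{pstar-3} by a completely different, direct argument (Proposition~\ref{pstar-3 pf}): given the min-positive element, it compresses by finite-rank projections $p_\ssL$, applies the nuclear case ($\mathcal B(\ssL)$ finite-dimensional) to obtain lifts $R_{\varepsilon,\ssL}$, observes that these lifts form a bounded net in $\mn(\bh)_+$ because their partial trace is controlled by $n(S_{11}+\varepsilon 1)$, and extracts a weak limit.

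For \eqref{pstar-4} you correctly flag the promotion from $\osw_n$ to $\cstar(\fnl)$ as the crux, but your gesture towards ``behaviour of min and max under passage to C$^*$-envelopes'' is not a proof, and no such general functoriality is available. The paper's argument (Proposition~\ref{pstar-4 pf}) is specific: since $\cstar(\fnl)$ is a C$^*$-algebra, $\cstar(\fnl)\oc\oss=\cstar(\fnl)\omax\oss$, so it suffices to show that any pair $(\phi,\psi)$ of cp maps with commuting ranges sends min-positive elements to positive operators. One takes a minimal Stinespring dilation $\phi=v^*\pi v$, lifts $\psi$ to $\tilde\psi$ with range commuting with $\pi$ via Arveson's commutant lifting theorem, and uses the hypothesis $\osw_n\omin\oss=\osw_n\omax\oss$ to get a ucp map $\pi\cdot\tilde\psi$ on $\osw_n\omin\oss$ which extends (by injectivity of $\bhpi$) to $\gamma$ on $\cstar(\fnl)\omin\cstare(\oss)$; since the generators $w_i\otimes 1$ lie in the multiplicative domain of $\gamma$, one gets $\gamma=\pi\cdot\tilde\psi$ on $\cstar(\fnl)\otimes\oss$, whence $\phi\cdot\psi={\rm Ad}_v\circ\gamma$ is completely positive on the min tensor product.
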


\begin{corollary}\label{k thm}{\rm (Kirchberg's Theorem)} 
$\cstar(\fni)\omin\bh\,=\,\cstar(\fni)\omax\bh$.
\end{corollary}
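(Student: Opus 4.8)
The plan is to deduce Kirchberg's Theorem from Theorem \ref{pstar thm} by a standard limiting argument that transfers the finite-dimensional information about $\cstar(\fnl)$ to $\cstar(\fni)$. First I would observe that, by Theorem \ref{pstar thm}\eqref{pstar-3}, the operator system $\bh$ has property $\pstar$, hence property $\pstarn$ for every $n\geq 2$; then Theorem \ref{pstar thm}\eqref{pstar-4} yields $\cstar(\fnl)\omin\bh=\cstar(\fnl)\omax\bh$ for every $n$. So the one remaining point is to pass from the finitely generated free groups $\fnl$ to the countably generated free group $\fni$, i.e.\ to show $\cstar(\fni)\omin\bh=\cstar(\fni)\omax\bh$.

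For that passage I would use the fact that $\cstar(\fni)=\overline{\bigcup_n \cstar(\fn)}$, where the $\cstar(\fn)$ are nested unital $\cstar$-subalgebras corresponding to the inclusions $\fn\hookrightarrow\fni$ of the first $n$ generators, and that these inclusions are topologically free so that there are faithful conditional expectations (or at least norm-one projections) $\cstar(\fni)\to\cstar(\fn)$. The key general principle is that both the minimal and the maximal $\cstar$-tensor norms are continuous with respect to increasing unions: if $\csta=\overline{\bigcup_k\csta_k}$ with $\csta_k$ an increasing chain of unital subalgebras, then $\csta\otimes_\alpha\cstb=\overline{\bigcup_k \csta_k\otimes_\alpha\cstb}$ for $\alpha\in\{\min,\max\}$ (for $\min$ this is automatic since the min norm is spatial and restricts correctly; for $\max$ it follows from the universal property, using that a representation of $\csta\odot\cstb$ is determined by its restrictions to the $\csta_k\odot\cstb$ together with a density argument). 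Applying this with $\csta_k=\cstar(\mathbb F_k)$ and $\cstb=\bh$, the equalities $\cstar(\mathbb F_k)\omin\bh=\cstar(\mathbb F_k)\omax\bh$ for all $k$ force the two completions of $\cstar(\fni)\odot\bh$ to coincide, which is exactly Corollary \ref{k thm}.

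Alternatively, and perhaps more cleanly, I would argue directly at the level of a putative element witnessing a difference of the norms: suppose $x\in\cstar(\fni)\odot\bh$ has $\|x\|_{\max}>\|x\|_{\min}$; since $x$ is a finite sum of elementary tensors, only finitely many generators of $\fni$ appear, so $x\in\cstar(\mathbb F_k)\odot\bh$ for some $k$; then $\|x\|_{\max}\le\|x\|_{\max,\,\cstar(\mathbb F_k)\otimes\bh}$ (the max norm can only decrease under embedding into a larger algebra, by functoriality of $\max$ for the inclusion $\cstar(\mathbb F_k)\hookrightarrow\cstar(\fni)$) and $\|x\|_{\min}=\|x\|_{\min,\,\cstar(\mathbb F_k)\otimes\bh}$ (the min norm is preserved), contradicting $\cstar(\mathbb F_k)\omin\bh=\cstar(\mathbb F_k)\omax\bh$. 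Note that one should take a little care that the $\fnl$ versus $\mathbb F_k$ indexing in the theorem is harmless: property $\pstarn$ for all $n$ gives the result for every finitely generated free group, which is all that is used.

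The main obstacle, modest as it is, is the functoriality of the maximal tensor norm under the subalgebra inclusion $\cstar(\mathbb F_k)\hookrightarrow\cstar(\fni)$ — one needs that $\|x\|_{\max}$ computed in the larger algebra does not exceed $\|x\|_{\max}$ computed in the smaller one. This is where the conditional expectation $\cstar(\fni)\to\cstar(\mathbb F_k)$ is convenient: tensoring it with $\mathrm{id}_{\bh}$ gives a ucp (indeed completely contractive) map $\cstar(\fni)\omax\bh\to\cstar(\mathbb F_k)\omax\bh$ that is the identity on $\cstar(\mathbb F_k)\odot\bh$, which yields precisely the needed norm inequality. Everything else is routine, so I expect the write-up to be short.
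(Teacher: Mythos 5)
Your reduction to the finitely generated case is exactly the paper's: both arguments invoke Theorem \ref{pstar thm}\eqref{pstar-3} and \eqref{pstar-4} to get $\cstar(\fn)\omin\bh=\cstar(\fn)\omax\bh$ for every finitely generated free group, and the only difference lies in the passage to $\fni$. The paper first shows that the max norm of an element of $\cstar(\fn)\otimes\bh$ is unchanged when computed in $\cstar(\fni)\omax\bh$ (using that the free groups embed into one another, so representations restrict and extend), and then realises $\cstar(\fni)\omin\bh$ as the direct limit of the algebras $\cstar(\fn)\omin\bh$, assembling the compatible monomorphisms into a single monomorphism into $\cstar(\fni)\omax\bh$. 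Your element-wise chain is leaner: for $x\in\cstar(\mathbb F_k)\odot\bh$ it reads (max norm in the large algebra) $\leq$ (max norm in the small algebra) $=$ (min norm in the small algebra) $=$ (min norm in the large algebra) $\leq$ (max norm in the large algebra), and this uses only the trivial contractivity of $\omax$ under the inclusion together with injectivity of $\omin$, so the isometric max-embedding that the paper establishes is never needed. What the paper's route buys is the explicit C$^*$-subalgebra picture $\cstar(\fn)\omax\bh\subset\cstar(\fni)\omax\bh$; what yours buys is avoiding that (harder) step altogether.

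Two small repairs, neither of which is a genuine gap. First, a general element of $\cstar(\fni)\odot\bh$ does \emph{not} lie in any $\cstar(\mathbb F_k)\odot\bh$: the left tensor factors are C$^*$-algebra elements, not finite linear combinations of words in finitely many generators. Run the chain on the dense $*$-subalgebra $\bigcup_k\cstar(\mathbb F_k)\odot\bh$ and finish with the density argument you sketch in your first paragraph (agreement of the two C$^*$-norms on a dense $*$-subalgebra of the algebraic tensor product forces the canonical surjection $\cstar(\fni)\omax\bh\rightarrow\cstar(\fni)\omin\bh$ to be isometric). Second, your closing paragraph attributes the inequality backwards: tensoring a ucp projection $\cstar(\fni)\rightarrow\cstar(\mathbb F_k)$ with ${\rm id}_{\bh}$ shows that the max norm computed in the \emph{smaller} algebra is dominated by the max norm computed in the \emph{larger} one (the nontrivial isometry direction), which your chain never uses; the inequality you actually need is the trivial functoriality direction and requires no expectation. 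Such a projection does exist, but the reason is simply that the group retraction $\fni\rightarrow\mathbb F_k$ killing the extra generators induces a unital $*$-homomorphism restricting to the identity on $\cstar(\mathbb F_k)$; ``topological freeness'' is not the relevant point.
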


\begin{proof} Recall that the free groups embed into one another as subgroups; in particular, if $n\geq2$, then
$\fnl\subset\fn$ and
$\fni\subset\fn$ as subgroups. These subgroup emebeddings lead to C$^*$-algebra
embeddings $\cstar(\fnl)\subset\cstar(\fn)$
and $\cstar(\fni)\subset\cstar(\fn)$, for $n\geq2$.
Hence, every
representation of the algebraic tensor product $\cstar(\fn)\otimes\bh$ induces (by restriction)
a representation of
$\cstar(\fni)\otimes\bh$, and conversely every representation of $\cstar(\fni)\otimes\bh$
induces a representation of $\cstar(\fn)\otimes\bh$. Thus, in consideration of the definition of the max-norm, 
if $a\in\cstar(\fn)\otimes\bh$, then the max-norm of $a$ in
$\cstar(\fni)\omax\bh$ coincides with the max-norm of $a$
in $\cstar(\fn)\omax\bh$.
That is, 
$\cstar(\fn)\omax\bh\subset\cstar(\fni)\omax\bh$ as a C$^*$-subalgebra.
As $\cstar(\fni)\omax\bh\subseteq\cstar(\fni)\omin\bh$, we show below that there is a
monomorphism of $\rho:\cstar(\fni)\omin\bh\rightarrow\cstar(\fni)\omax\bh$. 
 
By \eqref{pstar-3} and \eqref{pstar-4} of Theorem \ref{pstar thm},  
$\cstar(\fn)\omin\bh=\cstar(\fn)\omax\bh\subset\cstar(\fni)\omax\bh$ for every $n\in\mathbb N$, and so for each $n$
there is a monomorphism $\rho_n:\cstar(\fn)\omin\bh\rightarrow\cstar(\fni)\omax\bh$ such that $\rho_{n-1}=\rho_n\circ\iota_{n-1}$,
where $\iota_{n-1}:\cstar(\fnl)\omin\bh\rightarrow\cstar(\fn)\omin\bh$ is the canonical inclusion. Because $\cstar(\fni)\omin\bh$ is
the direct limit of the directed system $(\cstar(\fn)\omin\bh,\iota_n)_n$, there is a unique monomorphism 
$\rho:\cstar(\fni)\omin\bh\rightarrow\cstar(\fni)\omax\bh$ such that $\rho_n=\rho\circ\mathfrak i_n$, where $\mathfrak i_n$
is the embedding of $\cstar(\fn)\omin\bh$ into $\cstar(\fni)\omin\bh$ that is compatible with the inclusions $\iota_n$.
Hence, $\cstar(\fni)\omin\bh=\cstar(\fni)\omax\bh$.
\end{proof}

The following proposition yields assertion \eqref{pstar-1} of Theorem \ref{pstar thm}.

\begin{proposition}\label{pstar-1 pf}  $(\mn/\kj_n)\omin\oss\,=\,(\mn/\kj_n)\omax\oss$ if and only if
$\oss$ has property $\pstarn$.
\end{proposition}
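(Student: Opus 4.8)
The plan is to unwind both conditions into concrete statements about positivity of matrices over $\oss$ and show they coincide. Recall that by the general identification $\mpee\bigl((\mn/\kj_n)\omin\oss\bigr)_+ = \bigl((\mn/\kj_n)\omin\mpee(\oss)\bigr)_+$ (and similarly for $\omax$), it suffices to work at the level of elements $h \in (\mn/\kj_n)\otimes\mpee(\oss)$, using the fact that $e_{ii} = \tfrac1n\dot 1$ so that every such $h$ can be written in the normal form $h = \dot 1 \otimes S_{11} + \sum_{j\neq i} e_{ij}\otimes S_{ij}$ for suitable $S_{11}, S_{ij}\in\mpee(\oss)$. Thus the equality $(\mn/\kj_n)\omin\oss = (\mn/\kj_n)\omax\oss$ is equivalent to: every such $h$ that is $\min$-positive is also $\max$-positive (the reverse inclusion $\max \subseteq \min$ is automatic).

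First I would characterise $\min$-positivity of $h$. Since $\mn/\kj_n$ has C$^*$-envelope $\cstar(\fnl)$ (Theorem~\ref{mn mod jn}) and $\dot\phi: \mn/\kj_n \to \osw_n \subseteq \cstar(\fnl)$ is a complete order isomorphism, $\oss\omin(\mn/\kj_n)$ embeds completely order isomorphically into $\oss\omin\cstar(\fnl) = \cstar(\fnl)\omin\oss$ (functoriality of $\min$ and the fact that $\cstar(\fnl)$ sits inside $\bh$ for a faithful representation). Hence $h$ is $\min$-positive if and only if $1\otimes S_{11} + \sum_{j\neq i}(\tfrac1n w_iw_j^*)\otimes S_{ij} \geq 0$ in $\cstar(\fnl)\omin\mpee(\oss)$, which is exactly the hypothesis appearing in the definition of property $\pstarn$.

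Next I would characterise $\max$-positivity. Here the key input is Corollary~\ref{max quo} machinery, or more directly: $\phi:\mn\to\osw_n\cong\mn/\kj_n$ is a complete quotient map with completely order proximinal kernel (Lemma~\ref{cop}), so by Proposition~\ref{max quo} the map $\phi\otimes{\rm id}_\oss: \mn\omax\oss \to (\mn/\kj_n)\omax\oss$ is a complete quotient map, and by Proposition~\ref{lifting positives} it carries $\mpee(\mn\omax\oss)_+ = \mpee(\mn(\oss))_+$ onto $\mpee\bigl((\mn/\kj_n)\omax\oss\bigr)_+$. Unwinding the surjectivity statement through the identifications in Proposition~\ref{positive} (the implications \eqref{9}$\Leftrightarrow$\eqref{1} there, applied now with coefficients in $\oss$ rather than $\mpee$): $h$ is $\max$-positive in $(\mn/\kj_n)\omax\oss$ if and only if there is a positive $R = \sum_{i,j} E_{ij}\otimes R_{ij} \in \mn(\mpee(\oss))_+$ with $R_{ij} = S_{ij}$ for $i\neq j$ and $\sum_i R_{ii} = n S_{11}$. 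Comparing, property $\pstarn$ says precisely that the $\min$-positivity hypothesis forces the existence of such $R$ up to an $\varepsilon$-perturbation $\sum_i R_{ii}^\varepsilon = n(S_{11}+\varepsilon 1)$; but since $\mpee\bigl((\mn/\kj_n)\omax\oss\bigr)_+$ is Archimedean-closed (the $\varepsilon$-definition of $\cmaxp$), allowing $\varepsilon>0$ in the lift is equivalent to genuine $\max$-positivity of $h$. Matching the two descriptions then gives the equivalence.

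The main obstacle I expect is bookkeeping: ensuring that the normal-form reduction using $e_{ii}=\tfrac1n\dot1$ is genuinely an if-and-only-if (i.e.\ that a given $h$ in $(\mn/\kj_n)\otimes\mpee(\oss)$ determines $S_{11}$ and the off-diagonal $S_{ij}$ unambiguously, so the $\pstarn$ hypothesis applies to it), and that Proposition~\ref{positive}'s equivalences, which were stated for coefficients in $\mpee$, transfer verbatim to coefficients in an arbitrary operator system $\oss$ — this needs the observation that the proofs there only used that $\mpee$ is an operator system, together with the tensor-product identities $\mn(\oss)_+ = (\oss\omax\mn)_+ = (\oss\omin\mn)_+$. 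Once those two points are in place, the equivalence of $\pstarn$ with $(\mn/\kj_n)\omin\oss = (\mn/\kj_n)\omax\oss$ is a direct comparison of the two positivity criteria.
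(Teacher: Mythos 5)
Your skeleton is the same as the paper's: reduce to the normal form $h=\dot 1\otimes S_{11}+\sum_{j\neq i}e_{ij}\otimes S_{ij}$, identify min-positivity of $h$ with positivity of $1\otimes S_{11}+\sum_{j\neq i}(\tfrac1n w_iw_j^*)\otimes S_{ij}$ in $\cstar(\fnl)\omin\oss$ (this part is fine, by injectivity of $\omin$ and the complete order isomorphism $\dot\phi$), and identify max-positivity with the existence of positive lifts in $\mn(\oss)$. The gap is in the max-positivity step. You invoke Proposition \ref{lifting positives} for $\phi\otimes{\rm id}_\oss:\mn\omax\oss\rightarrow(\mn/\kj_n)\omax\oss$ to conclude that it maps positives \emph{onto} positives, i.e.\ that a max-positive $h$ has an exact lift $R\in\mn(\oss)_+$ with $\sum_iR_{ii}=nS_{11}$. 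But Proposition \ref{lifting positives} requires the kernel of that map, namely $\kj_n\otimes\oss\subseteq\mn\omax\oss$, to be completely order proximinal; Lemma \ref{cop} only gives proximinality of $\kj_n$ inside $\mn$ (equivalently, for matrix coefficients), and nothing supplies it after tensoring with an arbitrary, possibly infinite-dimensional, $\oss$. If exact lifting did hold for every $\oss$, property $\pstarn$ could be stated without the $\varepsilon$, and the weak-limit compactness argument the paper needs even for $\bh$ would be superfluous. For the same reason, your claim that the equivalence \eqref{1}$\Leftrightarrow$\eqref{9} of Proposition \ref{positive} ``transfers verbatim'' to coefficients in $\oss$ is not bookkeeping: that transfer is essentially the statement being proved.

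Your appeal to Archimedeanity (``allowing $\varepsilon>0$ in the lift is equivalent to genuine max-positivity'') only justifies one direction: from the $\varepsilon$-lifts, push forward by $q\otimes{\rm id}_\oss$ to get $h+\varepsilon(\dot1\otimes 1)$ max-positive for all $\varepsilon$, hence $h$ max-positive — this is exactly the paper's proof of the ``if'' direction and is fine. What is missing is the converse, which the ``only if'' direction of the proposition needs: max-positivity of $h$ implies, for every $\varepsilon>0$, a lift $R_\varepsilon\in\mn(\oss)_+$ with off-diagonal entries $S_{ij}$ and $\sum_iR_{ii}^\varepsilon=n(S_{11}+\varepsilon 1)$. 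The paper gets this by writing $h+\varepsilon 1=\alpha(A\otimes C)\alpha^*\in\dmaxone(\mn/\kj_n,\oss)$ with $A\in\mk(\mn/\kj_n)_+$, $C\in\mm(\oss)_+$, lifting only the matrix part $A$ to a positive element of $\mn\otimes\mk$ via Proposition \ref{positive} (where proximinality is available), and pushing $\alpha(R\otimes C)\alpha^*\in\mn(\oss)_+$ back down. Alternatively, one can use the complete order isomorphism $(\mn/\kj_n)\omax\oss\cong(\mn\omax\oss)/(\kj_n\otimes\oss)$ from Proposition \ref{max quo} and unwind the definition of the quotient cones, which have the $\varepsilon$ built in; since every selfadjoint element of $\kj_n\otimes\oss$ has the form $\sum_iE_{ii}\otimes T_i$ with $\sum_iT_i=0$, this yields precisely the trace condition in $\pstarn$. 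Either repair closes the gap; as written, the argument does not.
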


\begin{proof} Observe that if $p\in\mathbb N$ and $H\in \mpee(\mn/\kj_n\otimes\oss)$, then $H=[h_{k\ell}]_{1\leq k,\ell\leq p}$ and
\[
h_{k\ell}\;=\;\dot{1}\otimes S_{11}^{(k,\ell)}\,+\,\sum_{\ell\neq k}e_{k \ell}\otimes S_{ij}^{(k,\ell)}\,
\]
for some $S_{11}^{(k,\ell)},S_{ij}^{(k,\ell)}\in\oss$, where $1\leq k,\ell\leq p$ and $j\neq i$. Therefore, we write $H$ as
\[
H\;=\;\dot{1}\otimes S_{11}\,+\, \sum_{\ell\neq k}e_{ij}\otimes S_{ij}\,
\]
where
\[
S_{11}\;=\;[S_{11}^{(k,\ell)}]_{1\leq k,\ell\leq p}\quad\mbox{and}\quad
S_{ij}\;=\;[S_{ij}^{(k,\ell)}]_{1\leq k,\ell\leq p}\,.
\]
Hence, to prove that $\oss$ has property $\pstarn$, any argument that shows the property for $p=1$
also shows the property for arbitrary $p\in\mathbb N$.

Assume that $(\mn/\kj_n)\omin\oss\,=\,(\mn/\kj_n)\omax\oss$. We may also assume $p=1$ by the observation above.
Suppose that 
\[
{1}\otimes S_{11}\,+\,\sum_{j\neq i}(\frac{1}{n}w_iw_j^*)\otimes S_{ij}\,\in\,\left(  \cstar(\fnl)\omin\oss \right)_+\,.
\]
The ucp map $\phi:\mn\rightarrow\osw_n$ given by \eqref{defn of phi} is a complete quotient map, and therefore the map
\[
\dot{\phi}\otimes{\rm id}_{\oss}: (\mn/\kj_n)\omax\oss \rightarrow \osw_n\omax\oss
\]
is a complete order isomorphism. Hence, by the assumption that $(\mn/\kj_n)\omin\oss=(\mn/\kj_n)\omax\oss$,
we deduce that
\[
h\;=\;\dot{1}\otimes S_{11}\,+\,\sum_{j\neq i}e_{ij}\otimes S_{ij}
\]
is positive in $(\mn/\kj_n)\omin\oss$. By hypothesis,
$h\in ((\mn/\kj_n)\omax\oss)_+$ and so,
for every $\varepsilon>0$,
\[
h+\varepsilon 1\,=\,\left( \dot{1}\otimes(S_{11}+\varepsilon 1)+\sum_{j\neq i}e_{ij}\otimes S_{ij}\right)
\in \dmaxone(\mn/\kj_n,\oss)\,.
\]
Thus, there are $A\in\mk(\mn/\kj_n)_+$, $C\in\mm(\oss)_+$, and 
$\alpha:\mathbb C^k\otimes\mathbb C^m\rightarrow\mathbb C$ linear such that
\[
h+\varepsilon 1\,=\,\alpha(A\otimes C)\alpha^*\,.
\]
By Proposition \ref{positive}, the fact that $A\in\mk(\mn/\kj_n)_+=(\mn/\kj_n\otimes\mk)_+$ implies that there exist
$R=[R_{ij}]\in(\mn\otimes\mk)_+$ such that $q^{(k)}(R)=A$. Thus,
\[
h+\varepsilon 1\;=\;\alpha(q(R)\otimes C)\alpha^*\;=\;q\otimes{\rm id}_\oss\left(\alpha(R\otimes C)\alpha^*\right)
\in \dmaxone(\mn,\oss)\,.
\]
That is, $h+\varepsilon 1=q\otimes{\rm id}_\oss(R_\varepsilon)$, where 
$R_\varepsilon=\alpha(R\otimes C)\alpha^*\in(\mn\omax\oss)_+=\mn(\oss)_+$.
Therefore, we have proved that for every $\varepsilon>0$ there exists $R_\varepsilon\in \mn(\oss)_+$ such that
\[
\dot{1}\otimes(S_{11}+\varepsilon 1)+\sum_{j\neq i}e_{ij}\otimes S_{ij}\;=\;\sum_ie_{ii}\otimes R_{ii}^\varepsilon+\sum_{j\neq i}e_{ij}\otimes R_{ij}^\varepsilon\,,
\]
which yields $R_{ij}^\varepsilon=S_{ij}$ for $j\neq i$ and $\frac{1}{n}\sum_iR_{ii}^\varepsilon=S_{11}+\varepsilon 1$. Hence, 
$\oss$ has property $\pstarn$.

Conversely, suppose that $\oss$ has property $\pstarn$.
To prove that $\cminp(\mn/\kj_n,\oss)\subseteq\cmaxp(\mn/\kj_n,\oss)$ for all $p\in\mathbb N$,
we may, as mentioned earlier, restrict ourselves to the case $p=1$.

Suppose that $h\in \left(\mn/\kj_n\omin\oss\right)_+$. Thus, 
\[
h\;=\;\dot{1}\otimes S_{11}\,+\,\sum_{j\neq i}e_{ij}\otimes S_{ij}
\]
for some $S_{11},S_{ij}\in \oss$, where $1\leq i,j\leq n$ and $j\neq i$.  Let $\phi:\mn\rightarrow\cstar(\fnl)$
be the ucp map defined by \eqref{defn of phi}. Thus, 
\[
\dot{\phi}\otimes {\rm id}_{\oss}:\mn/\kj_n\omin\oss \rightarrow \cstar(\fnl)\omin\oss
\]
sends $h$ to 
\[
{1}\otimes S_{11}\,+\,\sum_{j\neq i}(\frac{1}{n}w_iw_j^*)\otimes S_{ij}\,\in\,\left(  \cstar(\fnl)\omin\oss \right)_+\,.
\]
By hypothesis, $\oss$ has property $\pstar$, and so 
for every $\varepsilon>0$ there exist $R_{ij}^\varepsilon\in  \oss$, $1\leq i,j\leq n$,
such that
$R_\varepsilon=[R_{ij}^\varepsilon]_{1\leq i,j\leq n}\in \mn(\oss)_+$, $R_{ij}^\varepsilon=S_{ij}$ for all $j\neq i$,  and
$\displaystyle\sum_{i=1}^n R_{ii}^\varepsilon\,=\,n(S_{11}+\varepsilon 1_{\oss})$.
The cone $\mn(\oss)_+$ is $(\mn\omax\oss)_+$ and therefore the ucp map 
\[
q\otimes{\rm id}_{\oss}: \mn\omax\oss\rightarrow (\mn/\kj_n)\omax\oss
\]
sends $R_\varepsilon$ to the positive element
\[
\sum_{i,j=1}^n e_{ij}\otimes R_{ij}^\varepsilon\;=\; \varepsilon(\dot{1}\otimes 1_{\oss})\,+\,h \,\in\, \left((\mn/\kj_n)\omax\oss\right)_+\,.
\]
As this holds for all $\varepsilon>0$, this implies,
by the Archimedean property of the positive cone in operator systems, that
$h\in  \left((\mn/\kj_n)\omax\oss\right)_+$.
\end{proof}
  
By definition, an operator system $\oss$ is (min,max)-nuclear if
$\oss\omin\ost=\oss\omax\ost$ for every operator system $\ost$. Thus,
statement \eqref{pstar-2} of Theorem \ref{pstar thm} follows immediately:
 
\begin{corollary}\label{pstar-3 pf} If $\oss$ is {\rm (min,max)}-nuclear, then $\oss$ has property $\pstar$.
\end{corollary}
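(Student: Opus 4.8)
The plan is to derive this immediately from Proposition~\ref{pstar-1 pf}, which characterizes property $\pstarn$ in terms of the coincidence of the minimal and maximal tensor products with $\mn/\kj_n$. The key observation is that $(\text{min,max})$-nuclearity is a \emph{universal} statement: it asserts $\oss\omin\ost=\oss\omax\ost$ for \emph{every} operator system $\ost$, so in particular we are free to specialize $\ost$ to the operator system $\mn/\kj_n$ for each fixed $n\geq2$.

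Concretely, I would proceed as follows. First, fix $n\geq2$ and apply the definition of $(\text{min,max})$-nuclearity with $\ost=\mn/\kj_n$ to obtain $\oss\omin(\mn/\kj_n)=\oss\omax(\mn/\kj_n)$. Next, invoke the symmetry (commutativity up to complete order isomorphism) of the minimal and maximal operator system tensor products to rewrite this as $(\mn/\kj_n)\omin\oss=(\mn/\kj_n)\omax\oss$. Then apply Proposition~\ref{pstar-1 pf}, which says precisely that this equality is equivalent to $\oss$ having property $\pstarn$; hence $\oss$ has property $\pstarn$. Finally, since $n\geq2$ was arbitrary and property $\pstar$ is by definition the conjunction of $\pstarn$ over all $n$, conclude that $\oss$ has property $\pstar$.

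There is essentially no obstacle here: the entire content has been packaged into Proposition~\ref{pstar-1 pf}, and the only thing to be careful about is the bookkeeping of which variable in the tensor product plays the role of $\mn/\kj_n$ and which plays the role of $\oss$ — but since $\omin$ and $\omax$ are symmetric, this is harmless. One could also phrase the argument without explicitly invoking symmetry by noting that Proposition~\ref{pstar-1 pf} is stated with $\mn/\kj_n$ on the left, and that $(\text{min,max})$-nuclearity of $\oss$ gives the equality of tensor norms regardless of the order of the factors. In short, the corollary is an immediate consequence of the definitions together with Proposition~\ref{pstar-1 pf}, and the proof is a single short paragraph.
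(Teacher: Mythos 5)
Your proof is correct and is essentially the paper's own argument: the paper likewise deduces the corollary immediately by specializing the defining property of (min,max)-nuclearity to $\ost=\mn/\kj_n$ and invoking the equivalence of Proposition~\ref{pstar-1 pf} (statement \eqref{pstar-1} of Theorem~\ref{pstar thm}). The remark about symmetry of $\omin$ and $\omax$ is harmless bookkeeping, exactly as you say.
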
 

We now prove statement \eqref{pstar-3} of Theorem \ref{pstar thm}.

\begin{proposition}\label{pstar-3 pf} $\bh$ has property $\pstar$.
\end{proposition}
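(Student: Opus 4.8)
The plan is to invoke Proposition~\ref{pstar-1 pf}, which asserts that $\bh$ has property $\pstarn$ if and only if $(\mn/\kj_n)\omin\bh=(\mn/\kj_n)\omax\bh$. Since the canonical map $(\mn/\kj_n)\omax\bh\to(\mn/\kj_n)\omin\bh$ is already ucp, and since (exactly as at the start of the proof of Proposition~\ref{pstar-1 pf}) the higher matrix levels reduce to the ground level upon replacing $\mathcal H$ by $\mathcal H^{(p)}$, it suffices to show the following: if $S_{11},S_{ij}\in\bh$ are such that $h=\dot 1\otimes S_{11}+\sum_{j\neq i}e_{ij}\otimes S_{ij}$ is positive in $(\mn/\kj_n)\omin\bh$, then there is a positive matrix $R=[R_{ij}]\in\mn(\bh)_+$ with $R_{ij}=S_{ij}$ for $j\neq i$ and $R_{11}+\dots+R_{nn}=nS_{11}$. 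Granting such an $R$, we are done: the complete quotient map $q\otimes{\rm id}:\mn\omax\bh\to(\mn/\kj_n)\omax\bh$ of Proposition~\ref{max quo} is ucp, $R\in\mn(\bh)_+=(\mn\omax\bh)_+$, and, using $e_{ii}=\tfrac1n\dot 1$ (Lemma~\ref{norm}) and the defining properties of $R$, one computes $(q\otimes{\rm id})(R)=\tfrac1n\dot 1\otimes(R_{11}+\dots+R_{nn})+\sum_{j\neq i}e_{ij}\otimes R_{ij}=h$, so $h\in((\mn/\kj_n)\omax\bh)_+$.

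To construct $R$ I would exhaust $\bh$ by its finite-dimensional corners. Let $(P_\lambda)$ be the net of all finite-rank projections in $\bh$, directed upward, so that $P_\lambda\to 1$ in the strong operator topology, and write $k=\operatorname{rank}P_\lambda$. The compression $X\mapsto P_\lambda X P_\lambda$ is a ucp map of $\bh$ into $P_\lambda\bh P_\lambda\cong\mk$, so, $\omin$ being functorial for ucp maps, the element $h_\lambda=\dot 1\otimes P_\lambda S_{11}P_\lambda+\sum_{j\neq i}e_{ij}\otimes P_\lambda S_{ij}P_\lambda$ is positive in $(\mn/\kj_n)\omin\mk=(\mn/\kj_n)\omax\mk=\mk(\mn/\kj_n)$, the last two equalities being the amplification identity. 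Now Proposition~\ref{positive}, in the equivalence of~\eqref{1} and~\eqref{9} (taken with $\mpee=\mk$, $A_{11}=P_\lambda S_{11}P_\lambda$, and $A_{ij}=P_\lambda S_{ij}P_\lambda$), supplies matrices $R^\lambda_{ij}\in P_\lambda\bh P_\lambda\subseteq\bh$ such that $[R^\lambda_{ij}]\geq 0$ in $\mn(\bh)$, $R^\lambda_{ij}=P_\lambda S_{ij}P_\lambda$ for $j\neq i$, and $\sum_i R^\lambda_{ii}=nP_\lambda S_{11}P_\lambda$. (Alternatively, since $\kj_n$ is completely order proximinal by Lemma~\ref{cop}, so is $\kj_n\otimes\mk$ in $\mn(\mk)$, and $h_\lambda$ may be lifted to a positive element of $\mn(\mk)$ along the complete quotient map $q\otimes{\rm id}$ by Propositions~\ref{max quo} and~\ref{lifting positives}.)

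Finally I would pass to a limit. Positivity of $[R^\lambda_{ij}]$ forces $0\leq R^\lambda_{ii}\leq\sum_k R^\lambda_{kk}=nP_\lambda S_{11}P_\lambda\leq n\|S_{11}\|\,1$, whence $\|R^\lambda_{ij}\|^2\leq\|R^\lambda_{ii}\|\,\|R^\lambda_{jj}\|\leq n^2\|S_{11}\|^2$ for all $i,j$; thus the net $(R^\lambda)$ lies in a norm-bounded subset of the von Neumann algebra $\mn(\bh)=\mathcal B(\mathcal H^{(n)})$. Bounded subsets of $\mn(\bh)$ are relatively compact in the ultraweak topology, so some subnet $R^{\lambda_\gamma}$ converges ultraweakly to an element $R$, with $R\geq 0$ because the positive cone is ultraweakly closed. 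Since $P_\lambda\to 1$ strongly we have $P_\lambda S_{ij}P_\lambda\to S_{ij}$ ultraweakly for every $i,j$; applying to the subnet the ultraweakly continuous maps $X\mapsto X_{ij}$ and $X\mapsto\sum_k X_{kk}$ yields $R_{ij}=S_{ij}$ for $j\neq i$ and $\sum_i R_{ii}=nS_{11}$, completing the construction. I expect this limiting step to be the only genuine obstacle: one needs the lifts $R^\lambda$ to remain in a single bounded set---this is handled by the elementary estimate above, namely that a positive operator matrix is dominated by its diagonal---so that an ultraweak limit exists, stays positive, and keeps the prescribed off-diagonal entries and diagonal sum. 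This is the one place where the special nature of $\bh$ enters, through its being a von Neumann algebra possessing an approximate identity of finite-rank projections; in particular, no use is made of nuclearity of $\bh$, which fails, or of Kirchberg's theorem, whose proof this result is meant to feed.
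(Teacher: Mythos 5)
Your proof is correct and follows essentially the same route as the paper: compress to finite-dimensional corners of $\bh$, where the matrix-algebra case (Proposition \ref{positive}, equivalently nuclearity of finite-dimensional algebras) supplies positive lifts, observe the net of lifts is norm bounded because a positive operator matrix is dominated by its diagonal, and pass to a weak limit of a subnet using $P_\lambda\to 1$ strongly. The only cosmetic difference is that you package the claim through Proposition \ref{pstar-1 pf} and produce exact lifts, whereas the paper verifies property $\pstarn$ directly, carrying the $\varepsilon$-tolerance in its finite-dimensional lifts.
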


\begin{proof} 
Suppose that $S_{11}, S_{ij}\in \bh$, $1\leq i, j\leq n$ and $j\neq i$, and that
\[
{1}\otimes S_{11}\,+\,\sum_{j\neq i}(\frac{1}{n}w_iw_j^*)\otimes S_{ij}\,\in\,\left(  \cstar(\fnl)\omin\bh \right)_+\,.
\]
(We are assuming, as in the proof of Proposition \ref{pstar-1 pf}, that $p=1$.) Let $\varepsilon>0$.

Because $\bk$ is nuclear for all finite-dimensional Hilbert spaces $\hilk$, 
if $p_\ssL\in \bh$ is a projection onto a  finite-dimensional subspace $\ssL\subseteq\hil$, then 
there exist $R_{ij}^{\varepsilon,\ssL}\in  \bh$, $1\leq i,j\leq n$, such that
$R_{\varepsilon,\ssL}=[R_{ij}^{\varepsilon,\ssL}]_{1\leq i,j\leq n}\in \mn(\bh)_+$,
$R_{ij}^{\varepsilon,\ssL}=p_\ssL S_{ij} p_\ssL$ for all $j\neq i$,  and
$\displaystyle\sum_{i=1}^n R_{ii}^{\varepsilon,\ssL}\,=\,n(p_\ssL S_{11}p_\ssL+\varepsilon p_{\ssL})$.
Therefore, $\mathcal F = \{R_{\varepsilon,\ssL}\}_{\ssL}$ is a net in $\mn(\bh)_+$ under subspace inclusion.
Moreover, the fact that 
$\displaystyle\sum_{i=1}^n R_{ii}^{\varepsilon,\ssL}\,=\,n(p_\ssL S_{11}p_\ssL+\varepsilon p_{\ssL}) \leq n(S_{11}+\varepsilon 1)$
implies that the diagonal operators of each matrix $R_{\varepsilon,\ssL}$ are bounded. 
By the positivity of 
$R_{\varepsilon,\ssL}$, the off-diagonal operators are also bounded, and hence $\mathcal F$ is a bounded net
of positive operators. Let $R_\varepsilon=[R_{ij}^\varepsilon]\in\mn(\bh)_+$ be the limit of some weakly convergent subnet 
$\mathcal F'= \{R_{\varepsilon,\ssL'}\}_{\ssL'}$ of $\mathcal F$. Then $p_{\ssL'}\rightarrow 1$ strongly and, therefore,
$\displaystyle\sum_{i=1}^n R_{ii}^\varepsilon\,=\,n(S_{11}+\varepsilon 1)$.
Because $R_{ij}^\varepsilon=S_{ij}$ for all $j\neq i$, the proof is complete.
\end{proof}

The proof below of statement \eqref{pstar-4} of Theorem \ref{pstar thm} is the final step in the proof of this theorem.

\begin{proposition}\label{pstar-4 pf}  If $\oss$ has property $\pstarn$, 
then $\cstar(\fnl)\omin\oss\,=\,\cstar(\fnl)\omax\oss$.
\end{proposition}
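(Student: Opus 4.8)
The plan is to deduce $\cstar(\fnl)\omin\oss = \cstar(\fnl)\omax\oss$ from the already-established equality $(\mn/\kj_n)\omin\oss = (\mn/\kj_n)\omax\oss$ (Proposition \ref{pstar-1 pf}) by transporting positivity through the complete quotient map $\phi:\mn\to\osw_n$ of Theorem \ref{mn mod jn}, together with the fact that $\cstar(\fnl) = \cstare(\mn/\kj_n)$, so that $\osw_n$ sits as an operator subsystem of $\cstar(\fnl)$. Since $\cstar(\fnl)\omax\oss \subseteq \cstar(\fnl)\omin\oss$ always holds, it suffices to prove the reverse inclusion of positive cones, and by the matrix-amplification trick used in the proof of Proposition \ref{pstar-1 pf} (absorbing the $p\times p$ matrix indices into the coefficient operators $S_{ij}$), it is enough to treat the case $p=1$.

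The first step is to reduce from $\cstar(\fnl)$ to $\osw_n$. Suppose $z \in (\cstar(\fnl)\omin\oss)_+$. Since $\osw_n$ is spanned by $\{w_iw_j^*\}$, a general element supported on $\osw_n\otimes\oss$ has the form ${1}\otimes S_{11} + \sum_{j\neq i}\frac1n w_iw_j^*\otimes S_{ij}$ for suitable $S_{11}, S_{ij}\in\oss$; but a positive element of $\cstar(\fnl)\omin\oss$ need not be supported on $\osw_n$, so here I would instead work with a dense, or at least cofinal, family. Concretely: $\cstar(\fnl)$ is the direct limit of the increasing sequence of operator systems $\osw_n^{(N)} := \operatorname{Span}\{w_{i}w_{j}^* : 1\le i,j\le N\}$ as $N\to\infty$ — these are cofinal and their union is dense — and by the previous corollary each $\osw_N$ has C$^*$-envelope $\cstar(\mathbb F_{N-1})$, with $\cstar(\mathbb F_{N-1})\hookrightarrow\cstar(\fnl)$ a subgroup inclusion when $N\le n$. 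Using that min and max tensor norms are determined by positive elements supported on finite-dimensional subsystems (continuity of the cones and the fact that $\oss\omin\ost$, $\oss\omax\ost$ are inductive limits of $\oss_0\omin\ost$, $\oss_0\omax\ost$ over finite-dimensional $\oss_0\subseteq\oss$), it suffices to show: if $\oss$ has property $\pstarn$ for every $n$, then $\osw_N\omin\oss = \osw_N\omax\oss$ for every $N$ — and then pass to the limit. But wait: the hypothesis is property $\pstarn$ for the \emph{fixed} $n$, so in the statement as given I should only claim what that single instance gives. Re-reading: Proposition \ref{pstar-4 pf} assumes only $\pstarn$ for one $n$. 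So the correct route is: property $\pstarn$ $\Longleftrightarrow$ $(\mn/\kj_n)\omin\oss=(\mn/\kj_n)\omax\oss$ $\Longleftrightarrow$ (via the complete quotient map $\dot\phi$ and Proposition \ref{max quo}) $\osw_n\omin\oss = \osw_n\omax\oss$, and $\osw_n$ generates $\cstar(\fnl)$ as a C$^*$-algebra; one then invokes a propagation result showing that nuclearity-type equalities for a generating operator subsystem propagate to the C$^*$-envelope.

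So the concrete steps are: (1) By Proposition \ref{pstar-1 pf}, $\pstarn$ gives $(\mn/\kj_n)\omin\oss=(\mn/\kj_n)\omax\oss$. (2) Since $\dot\phi:\mn/\kj_n\to\osw_n$ is a complete order isomorphism (Theorem \ref{mn mod jn}), we get $\osw_n\omin\oss = \osw_n\omax\oss$ as operator systems. (3) Now invoke the result of Kavruk--Paulsen--Todorov--Tomforde that if an operator subsystem $\oss_0\subseteq\csta$ generates the C$^*$-algebra $\csta$ and $\oss_0\omin\ost = \oss_0\omax\ost$, then under the relevant conditions $\csta\omin\ost = \csta\omax\ost$ — or, more self-containedly, represent $\cstar(\fnl)\omin\oss$ faithfully, use Arveson extension to extend the identity-type map on $\osw_n$, and use that $\osw_n$ contains the unitaries $w_j = n e_{1j}$ (up to the scaling in Lemma \ref{norm}) so that the extension is multiplicative on generators, forcing the min and max completions of $\cstar(\fnl)\otimes\oss$ to agree. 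The main obstacle I expect is step (3): passing from the equality of tensor norms on the finite-dimensional generating system $\osw_n$ to the full C$^*$-algebra $\cstar(\fnl)$. This is exactly the point where one uses that $\cstar(\fnl) = \cstare(\osw_n)$ and that unitaries in an operator system "rigidify" ucp maps into multiplicative ones — the Choi--Effros/Arveson dilation argument already exploited in the proof of Theorem \ref{mn mod jn}. Getting the bookkeeping right so that a ucp map $\cstar(\fnl)\omin\oss \to \bh$ restricts correctly and is seen to factor through $\cstar(\fnl)\omax\oss$ is the delicate part; everything else is formal.
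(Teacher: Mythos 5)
Your steps (1) and (2) are correct and agree with the paper: property $\pstarn$ gives $(\mn/\kj_n)\omin\oss=(\mn/\kj_n)\omax\oss$ (Proposition \ref{pstar-1 pf}), and the complete order isomorphism $\mn/\kj_n\cong\osw_n$ transports this to $\osw_n\omin\oss=\osw_n\omax\oss$. The genuine gap is your step (3), which is precisely the substance of the proposition and which you leave either to an unnamed propagation result ``under the relevant conditions'' or to a sketch whose ``delicate part'' you acknowledge you have not carried out. Passing from min $=$ max on the generating subsystem $\osw_n\otimes\oss$ to min $=$ max on $\cstar(\fnl)\otimes\oss$ is not formal, and your framing in terms of ``min and max completions'' agreeing is not even the right category here, since $\oss$ is only an operator system and the issue is equality of matricial cones, not of C$^*$-norms.

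Concretely, two ingredients needed to close this gap are absent from your sketch. First, one must reduce the problem to commuting pairs of completely positive maps: since a unital C$^*$-algebra is (c,max)-nuclear, $\cstar(\fnl)\oc\oss=\cstar(\fnl)\omax\oss$, so it suffices to show that $\phi\cdot\psi$ is completely positive on $\cstar(\fnl)\omin\oss$ for every pair of cp maps $\phi:\cstar(\fnl)\rightarrow\bh$, $\psi:\oss\rightarrow\bh$ with commuting ranges; without this reduction there is no handle on the max cone at all. Second, for such a pair one writes $\phi=v^*\pi v$ (minimal Stinespring) and must invoke Arveson's Commutant Lifting Theorem to replace $\psi$ by $\tilde\psi=\delta\circ\psi$ whose range commutes with $\pi(\cstar(\fnl))$; only then is $\pi\cdot\tilde\psi$ ucp on $\osw_n\oc\oss$, hence on $\osw_n\omax\oss=\osw_n\omin\oss$ by your step (2), and only then can injectivity of $\bhpi$ be used to extend it to a ucp map $\gamma$ on $\cstar(\fnl)\omin\cstare(\oss)$. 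Your appeal to multiplicative domains via the unitaries $w_i$ is indeed how the proof finishes ($\gamma(w_i\otimes 1)=\pi(w_i)$ unitary forces $\gamma(a\otimes s)=\pi(a)\tilde\psi(s)$, and compressing by $v$ recovers $\phi\cdot\psi$), but without the commuting-pair reduction and the commutant lifting step the map you want to extend is not even defined, so the argument as proposed does not yet constitute a proof.
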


\begin{proof} By \cite[Theorem 6.7]{kavruk--paulsen--todorov--tomforde2010},
$\cstar(\fnl)\oc\oss\,=\,\cstar(\fnl)\omax\oss$; therefore, we aim to show that 
if $x\in\mn((\cstar(\fnl)\omin\oss))_+$, then $\phi\cdot\psi^{(n)}(x)\in\mn(\bh)_+$ for every
pair of completely positive maps $\phi:\cstar(\fnl)\rightarrow\bh$ and $\psi:\oss\rightarrow\bh$
with commuting ranges.  
Let $\phi=v^*\pi v$ be a minimal Stinespring representation of $\phi$, where 
$\pi:\cstar(\fnl)\rightarrow\bhpi$ is a unital representation. By the Commutant Lifting Theorem
\cite[Theorem 1.3.1]{arveson1969}, there is a unital homomorphism 
$\delta:\phi(\cstar(\fnl))'\rightarrow\bhpi$ such that $\delta(z)v=vz$. for all $z\in\phi (\cstar(\fnl))'$.
Because $\psi(\oss)$ lies within the commutant of $\phi(\cstar(\fnl))$,
we obtain a completely positive map $\tilde\psi=\delta\circ\psi:\oss\rightarrow\bhpi$
such that the range of $\tilde\psi$ commutes with the range of $\pi$.

By hypothesis,
$\oss$ has property $\pstarn$ and so, by Proposition \ref{pstar-1 pf} and by the fact that $\mn/\kj_n$ and $\osw_n$
are completely order isomorphic, $\osw_n\omin\oss=\osw_n\omax\oss$. 
Consider the ucp map $\pi\cdot\tilde\psi:\osw_n\omin\oss\rightarrow\bhpi$. Because
$\osw_n\omin\oss$ is ucp mapped into $\cstar(\fnl)\omin\cstare(\oss)$ and because $\bhpi$ is
injective, there is a ucp extension $\gamma:\cstar(\fnl)\omin\cstare(\oss)\rightarrow\bhpi$ of $\pi\cdot\tilde\psi$.
For each $i$, $\gamma(w_i\otimes 1)=\pi\cdot\tilde\psi(w_i\otimes 1)=\pi(w_i)$,
and so $w_i\otimes 1$ is in the multiplicative domain $\mathfrak M_\gamma$ of $\gamma$. Hence,
$\cstar(\fnl)\otimes\{1\}=\cstar(\osw_n)\otimes\{1\}\subseteq\mathfrak M_\gamma$ and, therefore,
\[
\gamma(a\otimes s)=\gamma((a\otimes1)(1\otimes s))=\gamma(a\otimes 1)\gamma(1\otimes s)=\pi(a)\tilde\psi(s)\,,
\]
for all $a\in\cstar(\fnl)$, $s\in\oss$. Thus, on elementary tensors,
\[
\phi\cdot\psi(a\otimes s)=\phi(a)\psi(s)=v^*\pi(s)v\psi(s)=v^*\pi(s)\tilde\psi(s)v=v^*\gamma(a\otimes s)v\,.
\]
Therefore,
$\phi\cdot\psi={\rm Ad}_v\circ\gamma_{\vert\cstar(\fnl)\omin\oss}$ and so $(\phi\cdot\psi)^{(n)}(x)\in \mn(\bh)_+$.
 \end{proof}

\section{The Operator System $\tn/\kj_n$}\label{S:oss}
 
\begin{definition} Let $\tn=\{A\in\mn\,:\,A_{ij}=0,\;\forall\,|i-j|\geq2\}$, the operator system
of \emph{tridiagonal matrices}.
\end{definition}

Observe that $\tn=\mbox{Span}\,\{E_{ij}\,:\,|i-j|\leq 1\}$ and that
$\tn$ contains the kernel $\kj_n=\{D\in\mn\,:\,D\;\mbox{is diagonal and } {\rm tr} \,D=0\}$ of the
ucp map $\phi:\tn\rightarrow\cstar(\fnl)$ defined in \eqref{defn of phi} by $\phi(E_{ij})=\frac{1}{n}w_iw_j^*$.
Our interest in this section is with the quotient operator system $\tn/\kj_n$.

We begin with a useful fact about the operator system $\tn$ itself.

\begin{proposition}\label{tn is (min,c)-nucl} $\tn\omin\oss=\tn\oc\oss$ for every operator system $\oss$. That is, $\tn$ is {\rm (min,c)}-nuclear.
\end{proposition}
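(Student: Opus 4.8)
The plan is to prove the nontrivial inclusion $\cminp(\tn,\oss)\subseteq\ccomp(\tn,\oss)$, the reverse inclusion $\ccomp(\tn,\oss)\subseteq\cminp(\tn,\oss)$ being automatic. As in the earlier proofs of this paper, it suffices to treat $p=1$: replacing $\oss$ by $\mpee(\oss)$ and using the identifications $\mpee(\tn\omin\oss)=\tn\omin\mpee(\oss)$ and $\mpee(\tn\oc\oss)=\tn\oc\mpee(\oss)$, the assertion for all $p$ and all $\oss$ reduces to the assertion for $p=1$ and all $\oss$. Thus the goal is: \emph{if $u\in\tn\otimes\oss$ is positive in $\tn\omin\oss$, then $u$ is positive in $\tn\oc\oss$.}

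First I would lift the hypothesis to $\mn$. Since $\tn$ is an operator subsystem of $\mn$ and the minimal tensor product is injective \cite{kavruk--paulsen--todorov--tomforde2009}, the inclusion $\tn\omin\oss\hookrightarrow\mn\omin\oss$ is a complete order inclusion, so $u$ is positive in $\mn\omin\oss$; by the equality $(\oss\omin\mn)_+=(\oss\oc\mn)_+$ recorded in \S\ref{S:review}, $u$ is then positive in $\mn\oc\oss$, meaning that $(\Phi\cdot\Psi)(u)\geq0$ for every pair $(\Phi,\Psi)\in{\rm cp}(\mn,\oss)$.

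Now fix $(\phi,\psi)\in{\rm cp}(\tn,\oss)$, with $\phi:\tn\to\bh$ and $\psi:\oss\to\bh$ completely positive and ranges commuting; the aim is $(\phi\cdot\psi)(u)\geq0$. The key step, and the only place where tridiagonality enters, is the extension claim: \emph{$\phi$ has a completely positive extension $\tilde\phi:\mn\to\bh$ whose range still commutes with $\psi(\oss)$.} Granting this, $(\tilde\phi,\psi)\in{\rm cp}(\mn,\oss)$; and since $u$ involves only the matrix units $E_{ij}$ with $|i-j|\leq1$, on which $\tilde\phi$ and $\phi$ agree, one gets $(\phi\cdot\psi)(u)=(\tilde\phi\cdot\psi)(u)\geq0$ by the previous paragraph. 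Hence $u$ is positive in $\tn\oc\oss$, which completes the proof modulo the claim.

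Proving the extension claim is the main obstacle. Put $N=\psi(\oss)'\subseteq\bh$, a von Neumann algebra with $\phi(\tn)\subseteq N$; it is enough to extend $\phi:\tn\to N$ to a completely positive map $\mn\to N$. For each $i$ the block $\left[\begin{smallmatrix}\phi(E_{ii})&\phi(E_{i,i+1})\\\phi(E_{i+1,i})&\phi(E_{i+1,i+1})\end{smallmatrix}\right]$ is positive in $\mtwo(N)$, being $\phi^{(2)}$ applied to the positive element $\left[\begin{smallmatrix}E_{ii}\\E_{i+1,i}\end{smallmatrix}\right]\left[\begin{smallmatrix}E_{ii}&E_{i,i+1}\end{smallmatrix}\right]$ of $\mtwo(\mn)$. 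Since the tridiagonal pattern is banded, hence chordal, with all maximal cliques of size $2$, the positive‑completion theorem for chordal patterns — which holds with entries in a von Neumann algebra, as such algebras are closed under the range projections and generalised inverses used in the completion — yields $R_{ij}\in N$ for $|i-j|\geq2$, with $R_{ij}^*=R_{ji}$, so that the matrix whose $(i,j)$ entry is $\phi(E_{ij})$ for $|i-j|\leq1$ and $R_{ij}$ otherwise is positive in $\mn(N)$. This matrix is the Choi matrix of the map $\tilde\phi:\mn\to N$ sending each $E_{ij}$ to the corresponding entry; hence $\tilde\phi$ is completely positive, restricts to $\phi$ on $\tn$, and has range in $N=\psi(\oss)'$, as required. (One can also argue by induction on $n$: having positively completed the leading $(n-1)\times(n-1)$ corner inside $N$, the $\{n-1,n\}$ block supplies exactly the generalised Schur‑complement data needed to fill in the last row and column.)
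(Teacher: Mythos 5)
Your argument is correct, but it takes a genuinely different route from the paper. The paper disposes of this proposition in two lines: it observes that $\tn$ is the operator system $\oss_G$ of the path graph $G$ on $\{1,\dots,n\}$, which is chordal, and cites \cite[Proposition 6.10]{kavruk--paulsen--todorov--tomforde2009} for the statement that operator systems of chordal graphs are (min,c)-nuclear. You instead reprove that cited result in the special banded case: lift positivity from $\tn\omin\oss$ to $\mn\omin\oss=\mn\oc\oss$ (using injectivity of $\omin$ and nuclearity of $\mn$), and then, for a commuting pair $(\phi,\psi)\in{\rm cp}(\tn,\oss)$, extend $\phi$ to a cp map $\mn\to N=\psi(\oss)'$ by positively completing the partial Choi matrix $[\phi(E_{ij})]_{|i-j|\leq1}$ inside the von Neumann algebra $N$, using the one-step (Schur-complement) band completion; Choi's correspondence then gives $(\tilde\phi,\psi)\in{\rm cp}(\mn,\oss)$ and $(\phi\cdot\psi)^{(p)}(u)=(\tilde\phi\cdot\psi)^{(p)}(u)\geq0$. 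This is in the same spirit as the proof of the chordal-nuclearity theorem you are replacing (which likewise rests on \cite{paulsen--power--smith1989}-type completions), so what you gain is a self-contained argument that makes explicit exactly where tridiagonality enters, at the cost of having to verify two points you rightly flag: that the band completion can be carried out with entries in the von Neumann algebra $N$ (true, since the canonical one-step completion is a strong limit of elements of $N$), and the identification $\mpee(\tn\oc\oss)\cong\tn\oc\mpee(\oss)$ used in your reduction to $p=1$ --- a step you could avoid entirely, since your extension argument is independent of $p$ and the final inequality $(\tilde\phi\cdot\psi)^{(p)}(u)\geq0$ is available at every matrix level directly from the definition of the commuting cone. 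Neither point is a gap; the proof stands, and the paper's version simply outsources it to the more general chordal result.
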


\begin{proof}
Let $G=(V,E)$ be the graph with vertex set $V=\{1,\dots, n\}$ and edge set $E=\{(i,j)\,:\,|i-j|=1\}$. Thus, 
$G$ is simply a line segment from vertex $1$ through to vertex $n$. The operator system $\oss_G\subset\mn$ of the
graph $G$ is the span of matrix units $E_{ij}\in\mn$ for which $(i,j)\in E$, and so $\oss_G=\tn$. Because $G$ is
a chordal graph, $\tn=\oss_G$ is (min,c)-nuclear \cite[Proposition 6.10]{kavruk--paulsen--todorov--tomforde2009}.
\end{proof}

\begin{theorem}\label{tn 1}
Assume that $u_1,\dots,u_{n-1}$ are $n-1$ universal unitaries 
that generate the  full group C$^*$-algebra $\cstar(\fnl)$. Let $u_0=1$, $u_{-j}=u_j^*$, and
\[
\oss_{n-1}\,=\,\mbox{\rm Span}\,\{1,u_j, u_j^*\,:\,1\leq j\leq n-1\}\;=\;\mbox{\rm Span}\,\{u_j\,:\,-n+1\leq j\leq n-1\}\,.
\]
Consider the function $\phi:\tn\rightarrow\oss_{n-1}$ defined by
\begin{equation}\label{defn of phi 2}
\phi(E_{ij})\;=\;\frac{1}{n}u_{j-i},\,\;\forall\,|i-j|\leq 1\,.
\end{equation}
Then:
\begin{enumerate}
\item $\ker\phi=\kj_n$, the vector subspace of all diagonal matrices 
$D\in\mn$ with $\mbox{\rm tr}(D)=0$, 
\item $\kj_n\subset \tn$ is completely order proximinal, and
\item $\phi$ is a complete quotient map.
\end{enumerate}
\end{theorem}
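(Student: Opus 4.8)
The plan is to recognise $\phi$ as the restriction to $\tn$ of the complete quotient map $\Phi\colon\mn\to\osw_n$, $\Phi(E_{ij})=\frac{1}{n}w_iw_j^*$, of Section~\ref{S:osw}. Given the universal unitaries $u_1,\dots,u_{n-1}$ of the statement, put $w_1=1$ and $w_{k+1}^*=u_1u_2\cdots u_k$ for $1\le k\le n-1$; then $w_2,\dots,w_n$ is again a set of $n-1$ universal unitaries generating $\cstar(\fnl)$ (an invertible change of free generators), and one checks $w_iw_{i+1}^*=u_i$, $w_{i+1}w_i^*=u_i^*$, $w_iw_i^*=1$. Hence $\oss_{n-1}=\mathrm{Span}\{w_iw_j^*:|i-j|\le 1\}$ is exactly the range of $\Phi|_{\tn}$, and $\phi=\Phi|_{\tn}$. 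Granting this identification, part (1) is immediate: $\ker\phi=\tn\cap\ker\Phi=\tn\cap\kj_n=\kj_n$, since $\kj_n\subseteq\tn$ and $\ker\Phi=\kj_n$ was established in Section~\ref{S:osw}.

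For part (2) I would argue exactly as in Lemma~\ref{cop}, with $\mn$ replaced by $\tn$. By Proposition~\ref{proxchar}(3) it suffices to show $\mpee(\kj_n)+\mpee(\tn)_+$ is norm closed for each $p$. If $P_k\in\mpee(\tn)_+$ and $K_k\in\mpee(\kj_n)$ satisfy $P_k+K_k\to H$, apply the partial trace ${\rm id}_p\otimes{\rm tr}$: it annihilates $K_k$ because $\kj_n$ consists of traceless diagonal matrices, so $({\rm id}_p\otimes{\rm tr})(P_k)$ is bounded, and since the $P_k$ are positive this forces ${\rm tr}(P_k)$, hence $\|P_k\|$, to be bounded. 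Passing to a subsequence, $P_k\to P$; as $\mpee(\tn)$ is closed and $P\ge 0$ we get $P\in\mpee(\tn)_+$, whence $K_k\to H-P\in\mpee(\kj_n)$ and $H\in\mpee(\tn)_++\mpee(\kj_n)$. The only change from Lemma~\ref{cop} is that the relevant closed cone is now $\mpee(\tn)_+$, which is harmless since $\tn$ is a finite-dimensional operator system containing $\kj_n$.

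For part (3), apply Proposition~\ref{quo lemma} to the inclusions $\kj_n\subseteq\tn\subseteq\mn$: there is a complete order embedding $\iota\colon\tn/\kj_n\to\mn/\kj_n$ with $\iota\circ q_{\mathcal T}=q_{\mathcal M}|_{\tn}$, where $q_{\mathcal T}$ and $q_{\mathcal M}$ denote the canonical quotient maps. By Theorem~\ref{mn mod jn}, $\dot\Phi\colon\mn/\kj_n\to\osw_n$ is a complete order isomorphism; since a complete order isomorphism restricts to a complete order isomorphism between any unital operator subsystem and its image, $\dot\Phi$ carries $q_{\mathcal M}(\tn)$ completely order isomorphically onto $\dot\Phi(q_{\mathcal M}(\tn))=\Phi(\tn)=\oss_{n-1}$. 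Hence $\dot\Phi\circ\iota\colon\tn/\kj_n\to\oss_{n-1}$ is a complete order isomorphism, and since $(\dot\Phi\circ\iota)\circ q_{\mathcal T}=\Phi|_{\tn}=\phi$ with $q_{\mathcal T}$ surjective, $\dot\Phi\circ\iota$ is precisely the induced map $\dot\phi$. Therefore $\phi$ is a complete quotient map.

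I expect the only real work to be the bookkeeping in the first paragraph — identifying $\oss_{n-1}$ with $\mathrm{Span}\{w_iw_j^*:|i-j|\le 1\}$ through the change of free generators, and checking that the embedding furnished by Proposition~\ref{quo lemma} is compatible with $\dot\Phi$ so that the composite is genuinely the induced map of $\Phi|_{\tn}$ — and I do not anticipate a real obstacle here. Note that part (2) is not actually needed for part (3), which follows from the structural facts alone; part (2) is nonetheless the natural companion statement, and combined with Proposition~\ref{lifting positives} it guarantees that $\phi^{(p)}$ maps $\mpee(\tn)_+$ onto $\mpee(\oss_{n-1})_+$ for every $p$.
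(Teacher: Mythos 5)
Your proposal is correct and follows essentially the same route as the paper's proof: identify $\phi$ with the restriction to $\tn$ of the complete quotient map of Section~\ref{S:osw} via the change of free generators $u_i\leftrightarrow w_iw_{i+1}^*$, prove proximinality by repeating the argument of Lemma~\ref{cop}, and obtain (3) from Proposition~\ref{quo lemma} together with the fact that the complete order isomorphism $\mn/\kj_n\to\osw_n$ of Theorem~\ref{mn mod jn} restricts to one of $\tn/\kj_n$ onto $\oss_{n-1}$. Your extra bookkeeping (explicit formulas for the $w_k$, and the observation that (2) is not needed for (3)) is consistent with, and slightly more detailed than, the published argument.
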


\begin{proof} Because $\kj_n$ is a kernel in $\mn$ and $\kj_n\subset\tn\subset\mn$, 
Proposition \ref{quo lemma} shows that we may assume without loss of generality that $\tn/\kj_n\subset\mn/\kj_n$.
The function $\phi$ defined in \eqref{defn of phi 2} above is simply the function $\phi_{\vert \tn}$, the restriction of the the function
$\phi$ defined previously in \eqref{defn of phi} to the operator subsystem $\tn$, but where we identify each product 
$w_iw_{i+1}^*$ with $u_i$, for all $1\leq i\leq(n-1)$. (Assuming that $w_2,\dots, w_n$ $n-1$ are universal unitaries, then so are 
$u_1,\dots, u_{n-1}$.) Therefore, the restriction of $\phi$ from $\mn$ to $\tn$ preserves complete positivity. The proof of Lemma \ref{cop}
shows that $\kj_n\subset \tn$ is completely order proximinal, and it is evident that the restriction
of $\dot{\phi}$ from $\mn/\kj_n$ to $\tn/\kj_n$ is a complete order isomorphism of $\tn/\kj_n$ and $\oss_{n-1}$.
\end{proof}

\begin{corollary}\label{pos s} The following statements are equivalent for $A_i\in\mpee$, $-n+1\leq i\leq n-1$:
\begin{enumerate}
\item\label{pos s 1} $\displaystyle\sum_{i=1-n}^{n-1} u_i\otimes A_i\,\in\,\mpee(\oss_{n-1})_+$;
\item\label{pos s 2} there exists $R=[R_{ij}]\in \mpee(\mn)_+$ such that $R_{ij}=0$ if $|i-j|\geq 2$, 
$R_{i,i+1}=A_i$ and $R_{i+1,i}=A_{-i}$, for all $1\leq i\leq (n-1)$, and $\displaystyle\sum_{i=1}^n R_{ii}=A_0$.
\end{enumerate}
\end{corollary}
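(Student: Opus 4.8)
The plan is to deduce Corollary \ref{pos s} directly from the fact, established in Theorem \ref{tn 1}, that $\phi:\tn\rightarrow\oss_{n-1}$ is a complete quotient map whose kernel $\kj_n$ is completely order proximinal. First I would translate the hypothesis \eqref{pos s 1} through the identification $u_i\leftrightarrow e_{i,i+1}$ (suitably interpreted: $u_0=1\leftrightarrow \dot 1$, $u_i\leftrightarrow$ the image of $E_{i,i+1}$ for $i>0$, and $u_{-i}=u_i^*\leftrightarrow$ the image of $E_{i+1,i}$). Since $\dot\phi:\tn/\kj_n\rightarrow\oss_{n-1}$ is a complete order isomorphism, the element $\sum_{i=1-n}^{n-1}u_i\otimes A_i$ being positive in $\mpee(\oss_{n-1})_+$ is equivalent to the corresponding element $h=\dot 1\otimes A_0+\sum_{i=1}^{n-1}\big(e_{i,i+1}\otimes A_i+e_{i+1,i}\otimes A_{-i}\big)$ being positive in $\mpee(\tn/\kj_n)$, i.e.\ in $(\tn/\kj_n)\otimes\mpee$.

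Next I would invoke Proposition \ref{lifting positives}: because $\phi$ is a complete quotient map and $\kj_n\subset\tn$ is completely order proximinal, the map $\phi^{(p)}$ — equivalently $q^{(p)}=(q\otimes{\rm id}_p)$ restricted to $\tn$ — carries $\mpee(\tn)_+$ onto $\mpee(\tn/\kj_n)_+$. (Alternatively, one uses Proposition \ref{proxchar}: $\mathcal D_p=\mpee(\kj_n)+\mpee(\tn)_+$ equals $\mathcal C_p$, so every positive element of the quotient lifts exactly.) Hence there is a positive $R\in\mpee(\tn)_+\subseteq\mpee(\mn)_+$ with $q^{(p)}(R)=h$. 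Writing $R=\sum_{i,j}E_{ij}\otimes R_{ij}$, the condition $R\in\tn$ forces $R_{ij}=0$ whenever $|i-j|\geq2$; comparing $q^{(p)}(R)=\sum_{i,j}e_{ij}\otimes R_{ij}$ with the expression for $h$ and using $e_{ii}=\tfrac1n\dot 1$ (Lemma \ref{norm}) together with the linear independence of $\{\dot 1\}\cup\{e_{ij}:i\neq j,\,|i-j|\leq1\}$ in $\tn/\kj_n$, I read off $R_{i,i+1}=A_i$, $R_{i+1,i}=A_{-i}$ for $1\leq i\leq n-1$, and $\tfrac1n\sum_i R_{ii}=A_0$, i.e.\ $\sum_{i=1}^n R_{ii}=nA_0$. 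This is exactly \eqref{pos s 2} once one checks the normalization is $nA_0$ rather than $A_0$ — I would keep a careful eye on whether the corollary intends $A_0$ or $nA_0$ here, as it mirrors the $R_{11}+\cdots+R_{nn}=nA_{11}$ appearing in Proposition \ref{positive}\eqref{9}. The converse direction \eqref{pos s 2} $\Rightarrow$ \eqref{pos s 1} is immediate: if such a positive $R$ exists in $\mpee(\mn)_+$ with the stated band structure it already lies in $\mpee(\tn)_+$, and applying the completely positive map $\phi^{(p)}$ (i.e.\ $\dot\phi^{(p)}\circ q^{(p)}$) sends it to $\sum_i u_i\otimes A_i\in\mpee(\oss_{n-1})_+$.

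The only real subtlety — the ``hard part'' such as it is — is bookkeeping the index correspondence between the tridiagonal matrix entries and the symmetric generating set $\{u_j:-n+1\leq j\leq n-1\}$, and in particular getting the diagonal normalization factor of $n$ right; everything substantive has already been proved, so this corollary is essentially an unpacking of Theorem \ref{tn 1} combined with Proposition \ref{lifting positives} and Lemma \ref{norm}. I would write it as a short paragraph rather than a multi-step argument.
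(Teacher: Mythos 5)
Your route (lift positives through the complete quotient $\tn\rightarrow\oss_{n-1}$ using proximinality of $\kj_n$, read off the entries, and get the converse by applying $\phi^{(p)}$) is sound and is essentially the paper's argument — the paper just packages the lifting step as the equivalence \eqref{7}$\Leftrightarrow$\eqref{9} of Proposition \ref{positive} and then rescales. But your execution has a concrete scaling error at the translation step, and it is exactly the point you flagged but did not resolve. Since $\dot{\phi}(e_{i,i+1})=\frac{1}{n}u_i$ (not $u_i$), the preimage of $\sum_{i}u_i\otimes A_i$ under $\dot{\phi}\otimes{\rm id}_p$ is $h'=\dot{1}\otimes A_0+\sum_{i=1}^{n-1}\bigl(e_{i,i+1}\otimes nA_i+e_{i+1,i}\otimes nA_{-i}\bigr)$, with coefficients $nA_{\pm i}$, not $A_{\pm i}$. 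Lifting $h'$ gives a positive tridiagonal $R'$ with $R'_{i,i+1}=nA_i$, $R'_{i+1,i}=nA_{-i}$ and $\sum_i R'_{ii}=nA_0$, and the final step — present in the paper but missing from your write-up — is to set $R=\frac{1}{n}R'$, which yields precisely statement \eqref{pos s 2} with $\sum_i R_{ii}=A_0$. By dropping the factor $n$ in the identification you instead arrive at $\sum_i R_{ii}=nA_0$ and end up suggesting the corollary might be mis-normalised; it is not, and the $nA_0$ version you lean toward is actually false. For $n=2$, $p=1$, take $A_0=A_1=A_{-1}=1$: the matrix $\bigl[\begin{smallmatrix}1&1\\ 1&1\end{smallmatrix}\bigr]$ is positive with trace $2=nA_0$, yet $1+u_1+u_1^*$ is not positive in $C^*(\mathbb{Z})\cong C(\mathbb{T})$.

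The same slip infects your converse as stated: for a tridiagonal $R\in\mpee(\tn)_+$ one has $\phi^{(p)}(R)=\frac{1}{n}\bigl(1\otimes\sum_i R_{ii}\bigr)+\frac{1}{n}\sum_{i\neq 0}u_i\otimes A_i$, which is a (harmless) positive multiple $\frac{1}{n}$ of the element in \eqref{pos s 1} exactly when $\sum_i R_{ii}=A_0$; with your normalisation $\sum_i R_{ii}=nA_0$ the image is $1\otimes A_0+\frac{1}{n}\sum_{i\neq0}u_i\otimes A_i$, whose positivity does not imply \eqref{pos s 1} (same $n=2$ example). So the fix is purely bookkeeping — carry the $n$'s through the identification $u_i\leftrightarrow ne_{i,i+1}$ and rescale at the end, as the paper does with $R=\frac{1}{n}\tilde{R}$ — but as written your argument proves a differently normalised (and incorrect) statement rather than the corollary.
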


\begin{proof} By statements \eqref{7} and \eqref{9} of Proposition \ref{positive}, the element
\[
\displaystyle\sum_{i=1-n}^{n-1} u_i\otimes A_i\,=\,1\otimes A_0+\sum_{i=1}^{n-1}(\frac{1}{n}w_{i+1}w_{i}^*)\otimes (n A_{-i})
+\sum_{i=1}^{n-1}(\frac{1}{n}w_{i}w_{i+1}^*)\otimes (n A_{i})
\]
is positive in $\cstar(\fnl)\otimes\mpee$ if and only if there is a $\tilde R=[\tilde R_{ij}]_{i,j}\in \mn(\mpee)_+$
with $\tilde R_{ij}=0$ if $|i-j|\geq 2$, 
$\tilde R_{i,i+1}=nA_i$ and $\tilde R_{i+1,i}=nA_{-i}$, for all $1\leq i\leq (n-1)$, and $\displaystyle\sum_{i=1}^n \tilde R_{ii}=nA_0$. 
This last statement is equivalent to the matrix $R=\frac{1}{n}\tilde R$ having the properties in \eqref{pos s 2} above.
\end{proof}

\begin{theorem}\label{dual of tn} Let 
$\osu_n$ and $\osv_n$ be the operator subsystems of
$\displaystyle\bigoplus_{k=1}^{n-1}\mtwo$ defined by
\begin{equation}\label{defn of osv}
\osv_n\,=\,\left\{\bigoplus_{k=1}^{n-1}\left[ \begin{array}{cc} a_{11}^{k} & a_{12}^{k} \\ a_{21}^{k} & a_{22}^{k}\end{array}\right]\,:\,
a_{22}^{k}=a_{11}^{k+1},\;\forall\,1\leq k\leq (n-1)
\right\}
\end{equation}
and
\begin{equation}\label{defn of osu}
\osu_n\,=\,\left\{\bigoplus_{k=1}^{n-1}\left[ \begin{array}{cc} a_{11}^{k} & a_{12}^{k} \\ a_{21}^{k} & a_{22}^{k}\end{array}\right]\,:\,
a_{ii}^{k}=a_{jj}^{\ell},\;\forall\,1\leq k,\ell\leq (n-1)\,,i,j=1,2
\right\}\,.
\end{equation}
Then
\begin{enumerate}
\item $\tn^d$ and $\osv_n$ are completely order isomorphic, and
\item $\oss_{n-1}^d$ and $\osu_n$ are completely order isomorphic.
\end{enumerate}
\end{theorem}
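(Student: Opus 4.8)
The plan is to prove the two assertions in turn, obtaining (2) from (1) together with the complete quotient map $\phi:\tn\to\oss_{n-1}$ of Theorem~\ref{tn 1}, and obtaining (1) by computing the matricial cones of $\tn^d$ directly via the positive completion theorem for chordal graphs.

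First I would record the dual cone description: a matrix $G=[g_{ab}]\in\mpee(\tn^d)$ is positive exactly when the map $\hat G:\tn\to\mpee$, $\hat G(X)=[g_{ab}(X)]$, is completely positive. Since $\mpee$ is injective, $\hat G$ is completely positive if and only if it extends to a completely positive map $\mn\to\mpee$, and by Choi's theorem such extensions correspond to positive completions in $\mn(\mpee)$ of the partial operator matrix with prescribed $(i,j)$-entry $\hat G(E_{ij})$ for $|i-j|\le 1$. Now $\tn=\oss_G$ is the operator system of the path graph $G$ on $\{1,\dots,n\}$, which is chordal with maximal cliques $\{k,k+1\}$, $1\le k\le n-1$; so by the positive completion theorem for chordal graphs (cf. \cite{kavruk--paulsen--todorov--tomforde2009}, and the classical scalar case) such a completion exists if and only if each block
\[
\left[\begin{array}{cc}\hat G(E_{kk}) & \hat G(E_{k,k+1})\\ \hat G(E_{k+1,k}) & \hat G(E_{k+1,k+1})\end{array}\right]
\]
belongs to $\mtwo(\mpee)_+$; here the ``only if'' is elementary, as this block is $\hat G^{(2)}$ applied to the positive element $vv^*\in\mtwo(\tn)_+$ with $v$ the column whose entries are $E_{kk}$ and $E_{k+1,k}$.

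I would then define $\Theta:\tn^d\to\osv_n$ by
\[
\Theta(g)\,=\,\bigoplus_{k=1}^{n-1}\left[\begin{array}{cc} g(E_{kk}) & g(E_{k,k+1})\\ g(E_{k+1,k}) & g(E_{k+1,k+1})\end{array}\right],
\]
which lands in $\osv_n$ because the $(2,2)$-entry of the $k$-th summand and the $(1,1)$-entry of the $(k+1)$-st both equal $g(E_{k+1,k+1})$. Since $\{E_{kk}:1\le k\le n\}\cup\{E_{k,k+1},E_{k+1,k}:1\le k\le n-1\}$ is a basis of $\tn$ and $\dim\tn^d=\dim\osv_n=3n-2$, each $\Theta^{(p)}$ is a linear isomorphism onto $\mpee(\osv_n)$; and because positivity in $\bigoplus\mtwo$ is block-by-block, the description above says precisely that $\Theta^{(p)}$ carries $\mpee(\tn^d)_+$ onto $\mpee(\osv_n)_+$ for every $p$. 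Hence $\Theta$ is a complete order isomorphism; taking ${\rm tr}|_{\tn}$ (a positive multiple of a faithful state, hence a legitimate Archimedean order unit) as the order unit of $\tn^d$, it is moreover unital. This proves (1).

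For (2) I would invoke Theorem~\ref{tn 1}: $\phi:\tn\to\oss_{n-1}$ is a complete quotient map with $\ker\phi=\kj_n$, so by Proposition~\ref{q-coi} the dual $\phi^d:\oss_{n-1}^d\to\tn^d$ is a complete order injection whose range, as a linear subspace, is the annihilator $\kj_n^\perp=\{g\in\tn^d:g(\kj_n)=0\}$. As $\kj_n=\mbox{\rm Span}\{E_{ii}-E_{jj}:i\neq j\}$, membership $g\in\kj_n^\perp$ is equivalent to $g(E_{11})=\dots=g(E_{nn})$, which under $\Theta$ is exactly the requirement that all diagonal entries $a^k_{ii}$ of $\Theta(g)$ agree; hence $\Theta(\kj_n^\perp)=\osu_n$. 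Composing the complete order isomorphisms $\oss_{n-1}^d\xrightarrow{\phi^d}\kj_n^\perp\xrightarrow{\Theta}\osu_n$ yields (2). The one genuinely substantive step is the chordal completion step, which is what converts the abstract dual cones of $\tn$ into the concrete block conditions defining $\osv_n$ and $\osu_n$; everything else is linear algebra together with the duality formalism already in place.
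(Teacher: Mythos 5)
Your proposal is correct and takes essentially the same route as the paper: the substantive step in both is the operator-valued positive completion theorem for the chordal (tridiagonal) pattern applied to the Choi-type matrix of $\hat G$, and part (2) is obtained exactly as in the paper by dualizing the complete quotient map $\phi:\tn\rightarrow\oss_{n-1}$ via Proposition \ref{q-coi} and identifying the range with the annihilator of $\kj_n$. The only cosmetic difference is that the paper obtains complete positivity of the identification $\tn^d\rightarrow\osv_n$ for free by realizing it as the dual of an explicit ucp map $\bigoplus_{k=1}^{n-1}\mtwo\rightarrow\tn$, whereas you verify that direction directly using the positive elements $vv^*\in\mtwo(\tn)_+$; both are fine.
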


\begin{proof} The canonical linear basis of $\tn$ is given by the subset $\{E_{ij}\}_{|i-j|\leq 1}$ of the matrix units
of $\mn$. Let $\{S_{ij}\}_{|i-j|\leq1}\subset\tn^d$
be the basis of $\tn^d$ that is dual to this canonical basis of $\tn$.

Consider the operator system $\displaystyle\bigoplus_{k=1}^{n-1}\mtwo$ and 
for each $1\leq k\leq (n-1)$ define a ucp map  $\rho_k: \displaystyle\bigoplus_{k=1}^{n-1}\mtwo\rightarrow\mtwo$ by projection 
onto the $k$-th coordinate. Let $\{f_j\}_{j=1}^{2n-2}$ and $\{e_1,e_2\}$ be the canonical orthonormal bases of $\mathbb C^{2n-2}$
and $\mathbb C^2$, respectively, and let $\gamma_k:\mathbb C^{2n-2}\rightarrow\mathbb C^2$ be the linear map defined by
$\gamma_k(f_{2k})=e_1$, $\gamma_k(f_{2k+1})=e_2$, and $\gamma_k(f_j)=0$ for all other $j$. Let
$\theta:\displaystyle\bigoplus_{k=1}^{n-1}\mtwo\rightarrow\tn$ be the ucp map defined by
$\theta=\sum_{k}\gamma_k^*\rho_k\gamma_k$. Thus,
\[
\theta\left((A_k)_k\right)\,=\,\displaystyle\sum_{k=1}^{n-1}\displaystyle\sum_{i,j=1}^2 a_{ij}^kE_{i+k-1, j+k-1}\,, 
\]
where $A_k=[a_{ij}^k]_{i,j}\in\mtwo$. The dual map $\theta^d:\tn^d\rightarrow\displaystyle\bigoplus_{k=1}^{n-1}\mtwo$ is given by
\[
\theta^d\left(\displaystyle\sum_{|i-j|\leq 1}a_{ij}S_{ij}\right)\;=\;
\bigoplus_{k=1}^{n-1}\left[ \begin{array}{cc} a_{kk} & a_{k\,k+1} \\ a_{k+1\,k} & a_{k+1\,k+1}\end{array}\right]\,.
\]
Now let $\psi=\theta^d$ and observe that the range of $\psi$ is precisely $\osv_n$.
Thus, $\psi$ is a completely positive linear isomorphism between $\tn^d$ and $\osv_n$. We now show that
$\psi^{-1}$ is completely positive. 

To this end, suppose that $X\in \mpee(\tn^d)$ and $Y=\psi^{(p)}(X)\in \mpee(\osv_n)_+$. Our aim is to prove that 
$X$ is positive, which is to say that $X$ is a positive linear functional on $\mpee(\tn)$. The matrix $Y$ 
is a $p\times p$
matrix of $n\times n$ tridiagonal matrices; hence,
$y$ is an $n\times n$ tridiagonal matrix of $p\times p$ matrices $y_{ij}^k\in\mpee$:
\[
Y\,=\,\bigoplus_{k=1}^{n-1}\left[ \begin{array}{cc} y_{kk} & y_{k\,k+1} \\ y_{k\,k+1}^* & y_{k+1\,k+1}\end{array}\right]\,.
\]
The pull back of $Y$ to $X\in \mpee(\tn^d)$ is 
\[
X\,=\,\left[ \begin{array}{ccccc} y_{11} & y_{12} & && \\ 
                                                  y_{12}^* & y_{22} & y_{23} & & \\
                                                  & y_{23}^* & y_{33} & y_{34} & \\
                                                  &&&\ddots& \\
                                                  &&&&y_{nn} \end{array}
\right]\,.
\]
View the matrix $X$ above as a partial matrix in the sense that outside the tridiagonal band the
entries are not specified. The only fully specified square submatrices of $X$ are the ones corresponding to
$2\times 2$ principal submatrices $\left[ \begin{array}{cc} y_{kk} & y_{k\,k+1} \\ y_{k\,k+1}^* & y_{k+1\,k+1}\end{array}\right]$,
each of which is a direct summand of the positive matrix $Y$. Hence, the partially specified matrix $X$ can be completed 
to a positive matrix $\tilde X$ (see, for example, \cite[Theorem 4.3]{paulsen--power--smith1989}). The action of $\tilde X$
on $\mn(\mpee)$ as a linear functional is given by $Z\mapsto\mbox{tr}(Z\tilde X)$, and so $X$ is a restriction
of  $\tilde X$ as a linear functional on $\tn$. Therefore, $X$ is also positive, 
which completes the proof that $\psi$ is a complete order isomorphism.

By Thorem \ref{tn 1}, $\phi:\tn\rightarrow\oss_{n-1}$ is a complete quotient map; therefore, $\psi\circ\phi^d:\oss_{n-1}^d\rightarrow\osv_n$
is a complete order injection. We need only identify the range of $\psi\circ\phi^d$. Since $\oss_{n-1}$ is completely order isomorphic 
to $\tn/\kj_n$, the dual of $\oss_{n-1}$ is the annihilator of $\kj_n$ in $\mn^d$. Hence, $\psi\circ\phi^d(\oss_{n-1})=\osu_n$.
\end{proof}

\section{The Kirchberg Problem and the Connes Embedding Problem}\label{S:kp-cep}

Recall that the operator systems $\ose_n$, $\osv_n$, and $\osu_n$ of $n\times n$ matrices
are defined in \eqref{defn of ose}, \eqref{defn of osv}, and \eqref{defn of osu} respectively.
In this section we show that some matrix theory problems involving these operator systems,
if solved affirmatively, would lead to an affirmative solution to Kirchberg's Problem and, hence, to
the Connes Embedding Problem as well. 

The following lemma is a proof technique that we shall require.

\begin{lemma}\label{diagram lemma} Suppose that $\osr$, $\oss$, $\ost$, and $\osu$ are operator systems and that,
for linear transformations $\psi$, $\theta$, $\mu$, and $\nu$, 
where $\nu$ is a complete quotient map, $\mu$ is a complete order isomorphism, $\theta$ is a linear isomorphism, 
and $\theta^{-1}$
is completely positive,
the following diagram is commutative:
\begin{equation}\label{e:diagram}
\begin{CD}
\osr @>{\mu}>>\osu \\
@V{\psi}VV           @VV{\nu}V  \\
\oss   @>>{\theta}> \ost\,.
\end{CD}
\end{equation}
Then $\psi$ is a complete quotient map if and only if $\theta$ is a complete order isomorphism.
\end{lemma}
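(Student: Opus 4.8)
The plan is to read off from the commutativity of \eqref{e:diagram} that $\psi$ is automatically completely positive, and then to establish the two implications separately: one by viewing $\psi$ as a composition of complete quotient maps, the other by a short Archimedean argument. First I would record that commutativity gives $\theta\circ\psi=\nu\circ\mu$, so that $\psi=\theta^{-1}\circ\nu\circ\mu$; since $\mu$ is completely positive (being a complete order isomorphism), $\nu$ is completely positive, and $\theta^{-1}$ is completely positive by hypothesis, $\psi$ is completely positive. Thus in either direction $\psi$ is a genuine candidate for being a complete quotient map.

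For the implication that $\theta$ a complete order isomorphism forces $\psi$ to be a complete quotient map, I would first note that every complete order isomorphism is a complete quotient map: if $\mu$ is a complete order isomorphism then $\ker\mu=\{0\}$, the zero subspace is trivially a kernel, $\osr/\{0\}=\osr$ as operator systems, and the induced map $\dot\mu$ is $\mu$ itself. The same remark applies to $\theta^{-1}$. Consequently $\psi=\theta^{-1}\circ\nu\circ\mu$ exhibits $\psi$ as a composition of three complete quotient maps (here using the hypothesis on $\nu$), and a composition of complete quotient maps is again a complete quotient map, as observed in the proof of Corollary \ref{max quo cor}.

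For the converse, suppose $\psi$ is a complete quotient map; I would prove that $\theta$ is completely positive, which, together with the complete positivity of $\theta^{-1}$ and the fact that $\theta$ is a linear isomorphism, shows that $\theta$ is a complete order isomorphism. Fix $p\in\mathbb N$ and $s\in\mpee(\oss)_+$. Since $\dot\psi:\osr/\ker\psi\rightarrow\oss$ is a complete order isomorphism, there is a positive $\dot X\in\mpee(\osr/\ker\psi)_+=C_p(\osr/\ker\psi)$ with $\dot\psi^{(p)}(\dot X)=s$. Fixing a representative $H\in\mpee(\osr)$ of $\dot X$ (so $\psi^{(p)}(H)=s$), the definition of the cone $C_p$ supplies, for each $\varepsilon>0$, a matrix $K_\varepsilon\in\mpee(\ker\psi)_{\rm sa}$ with $x_\varepsilon:=\varepsilon 1+H+K_\varepsilon\in\mpee(\osr)_+$, and then $\psi^{(p)}(x_\varepsilon)=s+\varepsilon\,\psi^{(p)}(1)$ because $\psi^{(p)}$ annihilates $\mpee(\ker\psi)$. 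Applying $\theta^{(p)}$ and using commutativity of \eqref{e:diagram},
\[
\theta^{(p)}(s)+\varepsilon\,\theta^{(p)}\bigl(\psi^{(p)}(1)\bigr)\;=\;\theta^{(p)}\bigl(\psi^{(p)}(x_\varepsilon)\bigr)\;=\;\nu^{(p)}\bigl(\mu^{(p)}(x_\varepsilon)\bigr)\;\in\;\mpee(\ost)_+\,,
\]
the membership holding because $\mu$ is a complete order isomorphism and $\nu$ is completely positive. The element $\theta^{(p)}(\psi^{(p)}(1))=\nu^{(p)}(\mu^{(p)}(1))$ is itself positive in $\mpee(\ost)$, hence is dominated by a scalar multiple of the order unit of $\mpee(\ost)$; so, $\varepsilon>0$ being arbitrary, the Archimedean property forces $\theta^{(p)}(s)\in\mpee(\ost)_+$. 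Since $p$ was arbitrary, $\theta$ is completely positive.

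I expect the only genuinely delicate point to be the $\varepsilon$-approximate positive lift $\psi^{(p)}(x_\varepsilon)=s+\varepsilon\,\psi^{(p)}(1)$ in the converse direction; this is precisely what the definition of $C_p(\osr/\ker\psi)$ gives once $\dot\psi$ is known to be a complete order isomorphism. Everything else is a diagram chase together with one use of the Archimedean axiom for operator systems.
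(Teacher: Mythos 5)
Your proposal is correct and follows essentially the same route as the paper: the converse direction is the same $\varepsilon$-approximate positive lift from $C_p(\osr/\ker\psi)$ followed by the Archimedean property (you simply spell out the domination of $\nu^{(p)}(\mu^{(p)}(1))$ by a multiple of the order unit, which the paper leaves implicit), and the forward direction is exactly the paper's factorisation $\psi=\theta^{-1}\circ\nu\circ\mu$, with your observation that complete order isomorphisms are complete quotient maps and that complete quotient maps compose filling in the paper's phrase ``one easily deduces.''
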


\begin{proof} Suppose that $\psi$ is a complete quotient map. Thus, if $y\in\mpee(\oss)_+$, then
there is a hermitian $x\in\mpee(\osr)$ such that for every $\varepsilon>0$ there is a $k_\varepsilon\in\ker\psi$ such that 
$x'=\varepsilon1_\osr+x+k_\varepsilon\in\mpee(\osr)_+$. We have $\ker\psi\subseteq\ker(\nu\circ\mu)$, as $\theta\circ\psi=\nu\circ\mu$;
thus, $\varepsilon\nu\circ\mu(1)+\nu\circ\mu(x)=\nu\circ\mu(x')\in\mpee(T)_+$. 
As this is true for every $\varepsilon>0$, $\nu\circ\mu(x)=\theta\circ\psi(x)=\theta(s)\in\mpee(T)_+$, which proves that
$\theta$ is cp and, hence, that $\theta$ is a complete order isomorphism.

Conversely, if $\theta$ is a complete order isomorphism, then $\psi=\theta^{-1}\circ\nu\circ\mu$. Because
$\nu$ is a complete quotient map and $\theta^{-1}$ and $\mu$ are complete order isomorphisms, one easily deduces
that $\psi$ is a complete quotient map.
\end{proof}

We show in Theorems \ref{w-kp}\eqref{w-kp-3} and \ref{s1-kp}\eqref{s1-kp-2} below that an affirmative solution of 
two problems concerning the tensor products of operator systems of matrices will result in an
affirmative solution to the Kirchberg Problem.

Consider the following commutative diagram of vector spaces and linear transformations:
\begin{equation}
\begin{CD}
\mn\omin\mn   @>{ \cong}>>\mn\omax\mn \\
@V{\phi\otimes\phi}VV           @VV{\phi\otimes\phi\quad\mbox{\rm (complete quotient map)}}V  \\
\osw_n\omin\osw_n   @>>{{\rm id}_{\osw_{n}\otimes\osw_{n}}}> \osw_n\omax\osw_n\,.
\end{CD}
\end{equation}

Above, the identity map on $\mn\otimes\mn$ is a complete order isomorphism between $\mn\omin\mn$ and $\mn\omax\mn$,
and the map $\phi\otimes\phi:\mn\omax\mn\rightarrow\osw_n\omax\osw_n$ is a complete quotient map by
Corollary \ref{max quo cor}. The map $\theta={\rm id}_{\osw_{n}\otimes\osw_{n}}$ is a linear isomorphism 
of $\osw_n\omin\osw_n$ and $\osw_n\omax\osw_n$ with
$\theta^{-1}$ completely positive. Therefore, Lemma \ref{diagram lemma} asserts that
leftmost arrow on the diagram is a complete quotient map if and only if the bottom arrow is a complete
order isomorphism. The following result captures this fact and two additional equivalences.

\begin{theorem} \label{w-kp} The following statements are equivalent:
\begin{enumerate}
\item\label{w-kp-1} the map $\phi\otimes\phi:\mn\omin\mn\rightarrow\osw_n\omin\osw_n$ is a complete quotient map;
\item\label{w-kp-2} $\osw_n\omin\osw_n=\osw_n\omax\osw_n$;
\item\label{w-kp-3} $\ose_n\omin\ose_n=\ose_n\omax\ose_n$;
\item\label{w-kp-4} $\osw_n$ has property $\pstarn$.
\end{enumerate}
If any of these equivalent statements is true for every $n\in\mathbb N$, 
then the Kirchberg Problem has an affirmative solution.
\end{theorem}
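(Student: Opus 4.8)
The plan is to dispatch the equivalences \eqref{w-kp-1}--\eqref{w-kp-4} first and then the conditional statement. For \eqref{w-kp-1} $\Leftrightarrow$ \eqref{w-kp-2}, apply Lemma \ref{diagram lemma} to the commutative square displayed just before the theorem: its top arrow ${\rm id}:\mn\omin\mn\to\mn\omax\mn$ is a complete order isomorphism since $\mn$ is nuclear, its right arrow $\phi\otimes\phi:\mn\omax\mn\to\osw_n\omax\osw_n$ is a complete quotient map by Corollary \ref{max quo cor}, and its bottom arrow ${\rm id}$ has completely positive inverse; the lemma then gives that the left arrow $\phi\otimes\phi:\mn\omin\mn\to\osw_n\omin\osw_n$ is a complete quotient map exactly when $\osw_n\omin\osw_n=\osw_n\omax\osw_n$. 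For \eqref{w-kp-2} $\Leftrightarrow$ \eqref{w-kp-4}, specialise Proposition \ref{pstar-1 pf} to $\oss=\osw_n$ and use the complete order isomorphism $\mn/\kj_n\cong\osw_n$ of Theorem \ref{mn mod jn}: property $\pstarn$ for $\osw_n$ then reads $\osw_n\omin\osw_n=\osw_n\omax\osw_n$. For \eqref{w-kp-2} $\Leftrightarrow$ \eqref{w-kp-3}, pass to duals: Proposition \ref{e-wd} gives $\osw_n^d\cong\ose_n$, while Proposition \ref{fd dual tensor coi} gives $(\osw_n\omin\osw_n)^d\cong\ose_n\omax\ose_n$ and, applied to $\osw_n^d,\osw_n^d$ and then dualised, $(\osw_n\omax\osw_n)^d\cong\ose_n\omin\ose_n$; since $X\mapsto X^d$ is an involutive contravariant functor that preserves and reflects complete order isomorphisms of finite-dimensional operator systems, \eqref{w-kp-2} $\Leftrightarrow$ \eqref{w-kp-3}.

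Now assume one (hence each) of \eqref{w-kp-1}--\eqref{w-kp-4} holds for every $n$; via \eqref{w-kp-4} and Proposition \ref{pstar-1 pf} this is the statement that $\osw_n\omin\osw_n=\osw_n\omax\osw_n$ for every $n$. The first step is to upgrade these ``diagonal'' identities to all pairs. The systems are nested, $\osw_j\subseteq\osw_k$ for $j\le k$, and each such inclusion has a unital completely positive splitting: the restriction to $\osw_k$ of the $*$-homomorphism $\cstar({\mathbb F}_{k-1})\to\cstar({\mathbb F}_{j-1})$ fixing $w_2,\dots,w_j$ and sending $w_{j+1},\dots,w_k$ to $1$ carries $\osw_k$ into $\osw_j$ and is the identity on $\osw_j$. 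Since $\omin$ is injective, and since a completely positive map admitting a completely positive left inverse is a complete order injection (the inclusion $\osw_j\omax\osw_k\to\osw_l\omax\osw_l$ is left-inverted by the tensor product of two such splittings, $\omax$ being functorial), for $l=\max(j,k)$ the space $\osw_j\otimes\osw_k$, sitting inside $\osw_l\omax\osw_l=\osw_l\omin\osw_l$, inherits simultaneously the structures $\osw_j\omax\osw_k$ and $\osw_j\omin\osw_k$; hence $\osw_j\omin\osw_k=\osw_j\omax\osw_k$ for all $j,k$, so by Proposition \ref{pstar-1 pf} every $\osw_k$ has property $\pstar$.

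The second step moves to the C$^*$-envelopes. Because $\osw_n$ has property $\pstarn$, Proposition \ref{pstar-4 pf} gives $\cstar({\mathbb F}_{n-1})\omin\osw_n=\cstar({\mathbb F}_{n-1})\omax\osw_n$, which by the symmetry of the tensor products and Proposition \ref{pstar-1 pf} says $\cstar({\mathbb F}_{n-1})$ has property $\pstarn$. Running the same splitting/injectivity argument along the two chains $\osw_j\subseteq\osw_k$ and $\cstar({\mathbb F}_{k-1})\subseteq\cstar({\mathbb F}_{j-1})$ (the latter again split by a generator-killing $*$-homomorphism) promotes this to: $\cstar({\mathbb F}_{n-1})$ has property $\pstar$, for every $n$. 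A second application of Proposition \ref{pstar-4 pf}, now with $\oss=\cstar({\mathbb F}_{m-1})$, yields $\cstar({\mathbb F}_{n-1})\omin\cstar({\mathbb F}_{m-1})=\cstar({\mathbb F}_{n-1})\omax\cstar({\mathbb F}_{m-1})$ for all $m,n$. Finally $\cstar(\fni)$ is the inductive limit of the $\cstar({\mathbb F}_k)$ along the split inclusions, so $\cstar(\fni)\omin\cstar(\fni)$ and $\cstar(\fni)\omax\cstar(\fni)$ are both C$^*$-completions of the dense $*$-subalgebra $\bigcup_k\cstar({\mathbb F}_k)\otimes\cstar({\mathbb F}_k)$ in norms that coincide on it --- for $\omin$ because $\omin$ is injective, for $\omax$ because the splittings make the maps $\cstar({\mathbb F}_k)\omax\cstar({\mathbb F}_k)\to\cstar(\fni)\omax\cstar(\fni)$ isometric --- whence $\cstar(\fni)\omin\cstar(\fni)=\cstar(\fni)\omax\cstar(\fni)$, which is the Kirchberg Problem.

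The main obstacle is this passage from the hypothesis --- only the ``diagonal'' equalities $\osw_n\omin\osw_n=\osw_n\omax\osw_n$ --- to the full property $\pstar$, first for the $\osw_k$ and then for the $\cstar({\mathbb F}_k)$, since it is this full property that must be supplied twice to Proposition \ref{pstar-4 pf}. As $\omax$ is not injective, the propagation is not formal: it works only because the $\osw_n$ are nested subsystems of free-group C$^*$-algebras whose defining inclusions (and the inclusions of the envelopes) are completely positively, indeed $*$-homomorphically, split. The closing inductive-limit step is routine but must be arranged so that the maximal tensor norms really are compatible with the limit.
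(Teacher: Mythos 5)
Your treatment of the four equivalences is exactly the paper's: Lemma \ref{diagram lemma} with Corollary \ref{max quo cor} for \eqref{w-kp-1}$\Leftrightarrow$\eqref{w-kp-2}, duality via Propositions \ref{e-wd} and \ref{fd dual tensor coi} for \eqref{w-kp-2}$\Leftrightarrow$\eqref{w-kp-3}, and Proposition \ref{pstar-1 pf} together with $\mn/\kj_n\cong\osw_n$ for \eqref{w-kp-2}$\Leftrightarrow$\eqref{w-kp-4}. Where you diverge is the passage to the Kirchberg Problem, and the divergence stems from a misreading: Proposition \ref{pstar-4 pf} requires only property $\pstarn$ for the \emph{same fixed} $n$, not the full property $\pstar$, so the ``main obstacle'' you identify is not actually there. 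The paper's route is the short one: for each fixed $n$, property $\pstarn$ of $\osw_n$ gives $\cstar(\fnl)\omin\osw_n=\cstar(\fnl)\omax\osw_n$; by symmetry and Proposition \ref{pstar-1 pf} this says $\cstar(\fnl)$ has property $\pstarn$; a second application of Proposition \ref{pstar-4 pf}, now with $\oss=\cstar(\fnl)$, yields $\cstar(\fnl)\omin\cstar(\fnl)=\cstar(\fnl)\omax\cstar(\fnl)$, and the direct limit argument of Corollary \ref{k thm} (which needs only these diagonal identities) finishes. Your alternative --- upgrading the diagonal identities to $\osw_j\omin\osw_k=\osw_j\omax\osw_k$ and then to all $\cstar({\mathbb F}_{m-1})\omin\cstar({\mathbb F}_{n-1})=\cstar({\mathbb F}_{m-1})\omax\cstar({\mathbb F}_{n-1})$ via $*$-homomorphically split inclusions, injectivity of $\omin$ and functoriality of $\omax$ --- is sound (those facts are in the Kavruk--Paulsen--Todorov--Tomforde references, though not invoked in the paper, and a ucp map with ucp left inverse is indeed a complete order injection), and it does buy a strictly stronger intermediate conclusion; but it is entirely avoidable here, and note the chain you wrote as $\cstar({\mathbb F}_{k-1})\subseteq\cstar({\mathbb F}_{j-1})$ has the inclusion reversed for $j\leq k$ (the retraction kills generators in the larger algebra, so the split inclusion goes $\cstar({\mathbb F}_{j-1})\subseteq\cstar({\mathbb F}_{k-1})$). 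Your closing inductive-limit step, with isometry of the $\omax$ inclusions obtained from the splittings, is a correct variant of the paper's Corollary \ref{k thm} argument.
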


\begin{proof} The equivalence of \eqref{w-kp-1} and \eqref{w-kp-2} is, as mentioned above,
a consequence of Corollary \ref{max quo cor} and Lemma \ref{diagram lemma}.

Statements \eqref{w-kp-2} and \eqref{w-kp-3} are equivalent by duality. Indeed, by
Proposition \ref{e-wd}, the 
operator systems $\osw_n^d$ and $\ose_n$ are completely order isomorphic.
Therefore, the hypothesis $\ose_n\omin\ose_n=\ose_n\omax\ose_n$ is equivalent to
$\osw_n^d\omin\osw_n^d=\osw^d_n\omax\osw^d_n$ and, by passing to duals
(Proposition \ref{fd dual tensor coi}), is equivalent to
$\osw_n\omax\osw_n=\osw_n\omin\osw_n$.

 Theorem \ref{pstar thm}\eqref{pstar-4} asserts that $\osw_n$ has property $\pstarn$
 if and only if $(\mn/\kj_n)\omin\osw_n=(\mn/\kj_n)\omax\osw_n$. Since $\mn/\kj_n$
 is completely order isomorphic to $\osw_n$, we deduce that statements \eqref{w-kp-2}
 and \eqref{w-kp-4} are equivalent.
 
Suppose now that any one of the equivalent conditions holds, for every $n\in\mathbb N$. 
By Theorem \ref{pstar thm}\eqref{pstar-4}, 
$\cstar(\fnl)\omin\osw_n=\cstar(\fnl)\omax\osw_n$. But this means that $\cstar(\fnl)$
has property $\pstarn$ and so again Theorem \ref{pstar thm}\eqref{pstar-4}
is invoked to conclude that 
$\cstar(\fnl)\omin\cstar(\fnl)=\cstar(\fnl)\omax\cstar(\fnl)$. As this is true for all $n$, we obtain
$\cstar(\fni)\omin\cstar(\fni)=\cstar(\fni)\omax\cstar(\fni)$ by the direct limit argument used in the
proof of Corollary \ref{k thm}.
\end{proof} 
 
In moving from the operator system $\osw_n$ to the operator system $\osw_{n-1}$, it
it is interesting to contrast Theorem \ref{w-kp} above with Theorem \ref{s is better} below.

\begin{theorem}\label{s is better} {\rm (\cite{kavruk--paulsen--todorov--tomforde2010})}
The Kirchberg Problem has an affirmative solution
if and only if
$\oss_{n}\omin\oss_{n}=\oss_{n}\oc\oss_{n}$ for every $n\in\mathbb N$.
\end{theorem}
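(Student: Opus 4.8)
The plan is to relate $\oss_{n-1}$ to $\cstar(\fnl)$ the same way Theorem \ref{w-kp} relates $\osw_n$ to $\cstar(\fnl)$, but using the commuting tensor product in place of the maximal one. The key structural facts are already in hand: by Theorem \ref{tn 1}, $\phi:\tn\rightarrow\oss_{n-1}$ is a complete quotient map with completely order proximinal kernel $\kj_n$, and by Proposition \ref{tn is (min,c)-nucl}, $\tn$ is (min,c)-nuclear. So first I would set up, exactly as in the diagram preceding Theorem \ref{w-kp}, the commutative square
\begin{equation}
\begin{CD}
\tn\omin\tn   @>{\cong}>>\tn\oc\tn \\
@V{\phi\otimes\phi}VV           @VV{\phi\otimes\phi}V  \\
\oss_{n-1}\omin\oss_{n-1}   @>>{{\rm id}}> \oss_{n-1}\oc\oss_{n-1}\,,
\end{CD}
\end{equation}
where the top horizontal arrow is a complete order isomorphism because $\tn$ is (min,c)-nuclear. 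I would need the analogue of Corollary \ref{max quo cor} for the commuting tensor product, i.e.\ that $\phi\otimes\phi:\tn\oc\tn\rightarrow\oss_{n-1}\oc\oss_{n-1}$ is a complete quotient map; this follows from a version of Proposition \ref{max quo} for $\oc$ in place of $\omax$ (the proof goes through verbatim, using the universal property of the commuting tensor product from \cite{kavruk--paulsen--todorov--tomforde2009} instead of that of $\omax$), together with the fact that composition of complete quotient maps is a complete quotient map. Then Lemma \ref{diagram lemma} gives: $\phi\otimes\phi:\tn\omin\tn\rightarrow\oss_{n-1}\omin\oss_{n-1}$ is a complete quotient map if and only if $\oss_{n-1}\omin\oss_{n-1}=\oss_{n-1}\oc\oss_{n-1}$.

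Next I would show that "$\phi\otimes\phi$ on the min side is a complete quotient map for all $n$" is equivalent to the Kirchberg Problem. The forward direction: if for every $n$ the map $\phi\otimes\phi:\tn\omin\tn\to\oss_{n-1}\omin\oss_{n-1}$ is a complete quotient map, then $\oss_{n-1}\omin\oss_{n-1}=\oss_{n-1}\oc\oss_{n-1}$, hence (using that $\tn$, equivalently $\oss_{n-1}$ after the order isomorphism of Theorem \ref{tn 1}, is (min,c)-nuclear so that $\oss_{n-1}\oc\oss_{n-1}$ is a quotient of $\cstar(\fnl)\oc\oss_{n-1}$, and by \cite[Theorem 6.7]{kavruk--paulsen--todorov--tomforde2010} which identifies $\cstar(\fnl)\oc\,\cdot=\cstar(\fnl)\omax\,\cdot$) one bootstraps as in the proof of Theorem \ref{w-kp}: first $\cstar(\fnl)\omin\oss_{n-1}=\cstar(\fnl)\oc\oss_{n-1}=\cstar(\fnl)\omax\oss_{n-1}$, which says $\cstar(\fnl)$ has a $\pstar$-type property with respect to $\oss_{n-1}$, and then feeding $\cstar(\fnl)$ back in for $\oss$ yields $\cstar(\fnl)\omin\cstar(\fnl)=\cstar(\fnl)\omax\cstar(\fnl)$; the direct limit argument from Corollary \ref{k thm} then gives $\cstar(\fni)\omin\cstar(\fni)=\cstar(\fni)\omax\cstar(\fni)$.

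For the converse, assume the Kirchberg Problem holds, so $\cstar(\fni)$ is (min,max)-nuclear (this is the standard equivalence: Kirchberg's Problem is equivalent to $\cstar(\fni)$ being nuclear in the operator-system sense, since $\cstar(\fni)\omax A = \cstar(\fni)\oc A$ always and tensoring with an arbitrary operator system reduces to tensoring with C$^*$-algebras via the Choi--Effros embedding and injectivity of $\bh$). Then $\cstar(\fnl)$ is (min,max)-nuclear for each $n$, hence by Corollary \ref{pstar-3 pf} it has property $\pstar$, and more to the point $\cstar(\fnl)\omin\oss=\cstar(\fnl)\omax\oss=\cstar(\fnl)\oc\oss$ for every operator system $\oss$. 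Taking $\oss=\oss_{n-1}$ and pushing through the complete quotient map $\phi:\cstar(\fnl)\to$ (no — rather: using that $\tn/\kj_n\cong\oss_{n-1}$ and the commuting-tensor analogue of Proposition \ref{pstar-4 pf}, or directly the (min,c)-nuclearity of $\tn$ combined with the fact that a complete quotient map of the first factor preserves the relevant positivity), one deduces $\oss_{n-1}\omin\oss_{n-1}=\oss_{n-1}\oc\oss_{n-1}$. I expect the main obstacle to be the careful verification that the commuting tensor product behaves well under complete quotient maps and under the (min,c)-nuclearity of $\tn$ — i.e.\ assembling the precise analogue of Propositions \ref{max quo} and \ref{pstar-4 pf} for $\oc$ — since these are the steps where the asymmetry between $\omax$ and $\oc$ could bite; everything else is a direct transcription of the arguments already given for $\osw_n$ in Theorem \ref{w-kp} and Corollary \ref{k thm}.
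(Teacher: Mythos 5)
Your proposal does not match the paper's argument, and as written it contains genuine errors. The paper proves Theorem \ref{s is better} by pure citation: Kirchberg's result that an affirmative answer to the Kirchberg Problem is equivalent to $\cstar(\fni)$ having WEP \cite[Proposition 1.1]{kirchberg1993}, combined with \cite[Theorem 9.14]{kavruk--paulsen--todorov--tomforde2010}, which states that $\cstar(\fni)$ has WEP if and only if $\oss_n\omin\oss_n=\oss_n\oc\oss_n$ for all $n$. Your attempt to rebuild this from the paper's internal machinery breaks down at the following points. First, in your converse direction you assert that an affirmative Kirchberg Problem makes $\cstar(\fni)$ (min,max)-nuclear. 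This is false: the Kirchberg Problem is equivalent to WEP of $\cstar(\fni)$, not to nuclearity, and $\cstar(\fn)$ is not nuclear for any $n\geq 2$. Your justification --- that tensoring with an arbitrary operator system reduces to tensoring with C$^*$-algebras via the Choi--Effros embedding and injectivity of $\bh$ --- is valid for $\omin$ (which is injective) but fails for $\omax$, which is not injective in either variable. With that premise gone, your converse direction has no argument at all; the genuine proof needs either Kirchberg's WEP characterisation (the paper's route) or the facts that $\cstar_u(\oss_n)=\cstar(\fn)$ and $\oss\oc\ost\subseteq_{\rm coi}\cstar_u(\oss)\omax\cstar_u(\ost)$, none of which you supply.

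Second, in the forward direction you parenthetically claim that $\oss_{n-1}$ is (min,c)-nuclear ``after the order isomorphism of Theorem \ref{tn 1}.'' Theorem \ref{tn 1} identifies $\oss_{n-1}$ with the quotient $\tn/\kj_n$, not with $\tn$, and if $\oss_{n-1}$ really were (min,c)-nuclear then $\oss_{n-1}\omin\oss_{n-1}=\oss_{n-1}\oc\oss_{n-1}$ would hold unconditionally and the theorem would already settle the Kirchberg Problem. Moreover $\oss_{n-1}\oc\oss_{n-1}$ sits inside $\cstar(\fnl)\oc\oss_{n-1}$ as a subsystem, not as a quotient. So the crucial bootstrap step, passing from $\oss_{n-1}\omin\oss_{n-1}=\oss_{n-1}\oc\oss_{n-1}$ to $\cstar(\fnl)\omin\oss_{n-1}=\cstar(\fnl)\omax\oss_{n-1}$, is left unproved; it can be repaired by rerunning the multiplicative-domain argument of Proposition \ref{pstar-4 pf} with $\oss_{n-1}$ in place of $\osw_n$ (noting that a commuting pair $(\pi,\tilde\psi)$ restricts to a map that is completely positive on $\oss_{n-1}\oc\oss$, so min${}={}$c suffices), but that argument must actually be made. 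Third, your claimed $\oc$-analogue of Proposition \ref{max quo} is not a ``verbatim'' transcription: that proof uses the universal property of $\omax$ for jointly completely positive bilinear maps into an \emph{arbitrary} operator system, and the explicit description of $\dmaxp$ to pull positive elements back; the cones of $\oc$ are defined dually through pairs of completely positive maps into $\bh$ with commuting ranges, so neither ingredient transfers, and projectivity of $\oc$ under complete quotient maps in one variable is precisely the sort of statement you cannot assume here. (For the implication you actually need at that point --- quotient on the min side implies min${}={}$c --- the paper's own argument at the end of Theorem \ref{s1-kp} shows that functoriality of $\oc$ together with (min,c)-nuclearity of $\tn$ is enough, with no projectivity of $\oc$ required.)
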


\begin{proof} If $\oss_{n}\omin\oss_{n}=\oss_{n}\oc\oss_{n}$ for every $n\in\mathbb N$, then 
$\cstar(\fni)$ has the weak expectation property \cite[Theorem 9.14]{kavruk--paulsen--todorov--tomforde2010}
which in turn implies that $\cstar(\fni)\omin\cstar(\fni)=\cstar(\fni)\omax\cstar(\fni)$ \cite[Proposition 1.1(i)]{kirchberg1993}.

Conversely, if $\cstar(\fni)\omin\cstar(\fni)=\cstar(\fni)\omax\cstar(\fni)$, then $\cstar(\fni)$ has WEP
\cite[Proposition 1.1(iii)]{kirchberg1993}, and so $\oss_{n}\omin\oss_{n}=\oss_{n}\oc\oss_{n}$ for every $n\in\mathbb N$
\cite[Theorem 9.14]{kavruk--paulsen--todorov--tomforde2010}.
\end{proof}
 
 We now turn to our second multilinear algebra problem.

\begin{theorem} \label{s1-kp} The following statements are equivalent:
\begin{enumerate}
\item\label{s1-kp-1} $\phi\otimes\phi: \tn \omin \tn \to \oss_{n-1}\omin \oss_{n-1}$ is a complete quotient map;
\item\label{s1-kp-2} $\osu_n\omax\osu_n\subseteq_{\rm coi}\osv_n\omax\osv_n$.
\end{enumerate}
If either of these equivalent statements is true for every $n\in\mathbb N$, 
then the Kirchberg Problem has an affirmative solution.
\end{theorem}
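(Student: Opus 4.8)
The plan is to prove \eqref{s1-kp-1}$\Leftrightarrow$\eqref{s1-kp-2} by a duality argument, and then to deduce the Kirchberg Problem from \eqref{s1-kp-1} (for all $n$) by routing through Theorem \ref{s is better}. In contrast to the proof of Theorem \ref{w-kp}, I do not expect to be able to obtain \eqref{s1-kp-1}$\Leftrightarrow$\eqref{s1-kp-2} directly from Lemma \ref{diagram lemma}: $\tn$ is not known to be (min,max)-nuclear, so ${\rm id}\colon\tn\omin\tn\to\tn\omax\tn$ need not be a complete order isomorphism, and the corresponding diagram breaks down. Passing to duals avoids this.

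For \eqref{s1-kp-1}$\Leftrightarrow$\eqref{s1-kp-2}: by Theorem \ref{tn 1}, $\phi\colon\tn\to\oss_{n-1}$ is a complete quotient map, so by Proposition \ref{q-coi} the map $\phi\otimes\phi\colon\tn\omin\tn\to\oss_{n-1}\omin\oss_{n-1}$ is a complete quotient map if and only if its adjoint $(\phi\otimes\phi)^d$ is a complete order injection. I would then use Proposition \ref{fd dual tensor coi} twice to identify $(\tn\omin\tn)^d$ with $\tn^d\omax\tn^d$ and $(\oss_{n-1}\omin\oss_{n-1})^d$ with $\oss_{n-1}^d\omax\oss_{n-1}^d$, and check on elementary tensors — via the canonical embedding $g\otimes h\mapsto(s\otimes t\mapsto g(s)h(t))$ used in the proof of Proposition \ref{fd dual tensor coi} — that under these identifications $(\phi\otimes\phi)^d$ becomes $\phi^d\otimes\phi^d$. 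Finally, Theorem \ref{dual of tn} identifies $\tn^d$ with $\osv_n$ and $\oss_{n-1}^d$ with $\osu_n$, and, by the computation of the range of $\psi\circ\phi^d$ in the proof of Theorem \ref{dual of tn}, identifies $\phi^d$ with the inclusion $\osu_n\hookrightarrow\osv_n$. Hence $(\phi\otimes\phi)^d$ is a complete order injection exactly when the inclusion $\osu_n\otimes\osu_n\hookrightarrow\osv_n\otimes\osv_n$ is a complete order injection of $\osu_n\omax\osu_n$ into $\osv_n\omax\osv_n$, i.e.\ exactly when \eqref{s1-kp-2} holds.

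For the final assertion, suppose \eqref{s1-kp-1} holds for every $n$; by Theorem \ref{s is better} it is enough to prove $\oss_{n-1}\omin\oss_{n-1}=\oss_{n-1}\oc\oss_{n-1}$ for every $n$. Fix $n$ and let $\mathcal K$ denote the kernel of the linear map $\phi\otimes\phi\colon\tn\otimes\tn\to\oss_{n-1}\otimes\oss_{n-1}$. Since $\tn$ is (min,c)-nuclear (Proposition \ref{tn is (min,c)-nucl}), the operator systems $\tn\omin\tn$ and $\tn\oc\tn$ coincide, hence so do the quotient operator systems $(\tn\omin\tn)/\mathcal K$ and $(\tn\oc\tn)/\mathcal K$. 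By \eqref{s1-kp-1} together with the First Isomorphism Theorem (Theorem \ref{1st iso thm}), the induced map $(\tn\omin\tn)/\mathcal K\to\oss_{n-1}\omin\oss_{n-1}$ is a complete order isomorphism; on the other hand, because the commuting tensor product is functorial for ucp maps, $\phi\otimes\phi\colon\tn\oc\tn\to\oss_{n-1}\oc\oss_{n-1}$ is ucp and descends to a ucp map $(\tn\oc\tn)/\mathcal K\to\oss_{n-1}\oc\oss_{n-1}$. Composing the inverse of the first map with the second yields a ucp map $\oss_{n-1}\omin\oss_{n-1}\to\oss_{n-1}\oc\oss_{n-1}$ whose underlying linear transformation is the identity of $\oss_{n-1}\otimes\oss_{n-1}$, since $\phi\otimes\phi$ is the same linear surjection in both rows. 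As the identity map $\oss_{n-1}\oc\oss_{n-1}\to\oss_{n-1}\omin\oss_{n-1}$ is always ucp, this forces $\oss_{n-1}\omin\oss_{n-1}=\oss_{n-1}\oc\oss_{n-1}$, and Theorem \ref{s is better} then gives an affirmative answer to the Kirchberg Problem.

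The step I expect to require the most care is the naturality bookkeeping in the second paragraph: one must verify that the composite of the identifications coming from Propositions \ref{q-coi} and \ref{fd dual tensor coi} and Theorem \ref{dual of tn} genuinely carries $(\phi\otimes\phi)^d$ to the inclusion $\osu_n\otimes\osu_n\hookrightarrow\osv_n\otimes\osv_n$, rather than to some other complete order injection. This is routine — it amounts to evaluating everything on elementary tensors — but it involves tracking $\phi^d$ through three successive identifications, so it must be done carefully. The only other minor point is the functoriality of $\oc$ for ucp maps, which is immediate: if $f,g$ are ucp then $(\Phi\cdot\Psi)\circ(f\otimes g)=(\Phi\circ f)\cdot(\Psi\circ g)$ for every commuting pair $(\Phi,\Psi)$, and $(\Phi\circ f,\Psi\circ g)$ is again a commuting cp pair.
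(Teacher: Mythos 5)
Your proposal is correct and follows essentially the same route as the paper: Proposition \ref{q-coi} plus the dual identifications of Proposition \ref{fd dual tensor coi} and Theorem \ref{dual of tn} give the equivalence \eqref{s1-kp-1}$\Leftrightarrow$\eqref{s1-kp-2}, and the (min,c)-nuclearity of $\tn$ together with Theorem \ref{s is better} gives the Kirchberg conclusion. The only (harmless) difference is in the final step, where the paper lifts positive elements of $\oss_{n-1}\omin\oss_{n-1}$ directly to positives of $\tn\omin\tn$, whereas you transfer positivity through the quotient operator system via the First Isomorphism Theorem — an equivalent and, if anything, slightly more careful phrasing of the same argument.
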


\begin{proof} Assume \eqref{s1-kp-1} and let $\mu=\phi\otimes\phi$. Thus,
$\mu$ is a complete quotient map.
By Theorem \ref{dual of tn}, $\tn^d =\osu_n$ and $\oss_{n-1}^d = \osv_n$ are complete order isomorphisms.  
Thus, if $\mu$ is a complete quotient map, then
\[ 
\mu^d: (\oss_{n-1}\omin\oss_{n-1})^d \to (\tn \omin \tn)^d 
\]
is a complete order injection, but 
\[ (\oss_{n-1}\omin\oss_{n-1})^d = \osu_n\omax\osu_n \]
and
\[ 
(\tn \omin \tn)^d = \osv_n \omax \osv_n \,. 
\]

Conversely, assume \eqref{s1-kp-2}. That is, $\osu_n\omax\osu_n\subseteq_{\rm coi}\osv_n\omax\osv_n$, and so
\[
 \oss_{n-1}^d\omax\oss_{n-1}^d\,=\,\osu_n\omax\osu_n\,\subseteq_{\rm coi}\,\osv_n\omax\osv_n=\tn^d\omax\tn^d\,.
\]
Denote this complete order injection $ \oss_{n-1}^d\omax\oss_{n-1}^d\rightarrow \tn^d\omax\tn^d$ by $\vartheta$.
Hence, in passing to duals, we obtain a complete quotient map
\[
\vartheta^d:\tn\omin\tn=(\tn^d\omax\tn^d)^d\rightarrow(\oss_{n-1}^d\omax\oss_{n-1}^d)^d=
\oss_{n-1}\omin\oss_{n-1}.
\]
Since $\vartheta^d = \mu$,  statement \eqref{s1-kp-1} follows.

Assume that one of \eqref{s1-kp-1} or \eqref{s1-kp-2} is true for all $n\in\mathbb N$.
By hypothesis, each $y\in \mpee(\oss_{n-1}\omin\oss_{n-1})_+$ is given by $\mu^{(p)}(x)$ for some
$x\in\mpee(\tn\omin\tn)_+$. Recall from Proposition \ref{tn is (min,c)-nucl} that $\tn$
is (min,c)-nuclear; therefore, in particular, $\tn\omin\tn=\tn\oc\tn$.
Thus, since $\mpee(\tn\omin\tn)_+=\mpee(\tn\oc\tn)_+$ and 
because $\mu:\tn\oc\tn\rightarrow\oss_{n-1}\oc\oss_{n-1}$
is completely positive, $y\in \mpee(\oss_{n-1}\oc\oss_{n-1})_+$, which implies that 
$\oss_{n-1}\omin\oss_{n-1}=\oss_{n-1}\oc\oss_{n-1}$. 
As this is true for all $n\in\mathbb N$, Theorem \ref{s is better} completes the argument.
\end{proof}

\section{C$^*$-Algebras with the Weak Expectation Property}\label{S:WEP}

By considering the positive liftings in Proposition \ref{positive} in the case of the operator system $\tn$, we are
led to consider a special case of property $\pstarn$, which we call property
$\psn$.

\begin{definition} An operator system $\oss$ is said to have \emph{property $\psn$} 
for a fixed $n\in\mathbb N$ if, for every $p\in\mathbb N$ and all
$S_{i}\in \mpee(\oss)$, where $1-n\leq i\leq n-1$ and $j\neq i$, for which
\[
\sum_{i=1-n}^{n-1}u_i\otimes S_{i}\;\geq\;0 \mbox{ in }\; \cstar(\fnl)\omin\mpee(\oss)\,,
\]
then for every $\varepsilon>0$ there exist $R_{ij}^\varepsilon\in  \mpee(\oss)$, $1\leq i,j\leq n$,
such that
\begin{enumerate}
\item $R_\varepsilon=[R_{ij}^\varepsilon]_{1\leq i,j\leq n}$ is positive in $\mn(\mpee(\oss))$,
\item $R_{ij}^\varepsilon=0$ for all $|i-j|\geq2$, $R_{i,i+1}^\varepsilon=S_{i}$, and $R_{i+1,i}^\varepsilon=S_{-i}$ for all $i$, and
\item $\displaystyle\sum_{i=1}^n R_{ii}^\varepsilon \,=\,S_0+\varepsilon 1_{\mpee(\oss)}$.
\end{enumerate}
We say that $\oss$ has \emph{property $\ps$} if it has property $\psn$ for every $n \in \mathbb N.$
\end{definition}

\begin{proposition}\label{prop psn} The following statements are equivalent for an operator system $\oss$:
\begin{enumerate}
\item\label{psn-1} $\oss$ has property $\psn$;
\item\label{psn-2} $\phi\otimes{\rm id}_\oss:\tn\omin\oss\rightarrow\oss_{n-1}\omin\oss$
is a complete quotient map.
\end{enumerate}
\end{proposition}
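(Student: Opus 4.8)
The plan is to prove the equivalence of \eqref{psn-1} and \eqref{psn-2} by recognizing that property $\psn$ is the ``tridiagonal analogue'' of property $\pstarn$, and then running essentially the same argument used in the proof of Proposition \ref{pstar-1 pf}, but with $\mn$ replaced by $\tn$, with $\osw_n$ replaced by $\oss_{n-1}$, and with the matricial-positivity equivalence of Proposition \ref{positive} replaced by its tridiagonal counterpart, Corollary \ref{pos s}. The key structural inputs are: (i) $\phi:\tn\rightarrow\oss_{n-1}$ of \eqref{defn of phi 2} is a complete quotient map whose kernel $\kj_n$ is completely order proximinal (Theorem \ref{tn 1}); (ii) consequently $\dot\phi\otimes{\rm id}_\oss:(\tn/\kj_n)\omax\oss\rightarrow\oss_{n-1}\omax\oss$ is a complete order isomorphism (Proposition \ref{max quo}); and (iii) the identification $(\tn\omax\oss)_+=\mn(\oss)_+$-type facts that let one move between matrices over $\oss$ and elements of $(\tn/\kj_n)\omax\oss$.

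First I would reduce, exactly as in the first paragraph of the proof of Proposition \ref{pstar-1 pf}, to the case $p=1$: an element $H$ of $\mpee(\tn/\kj_n\otimes\oss)$ can be written as $\dot 1\otimes S_0+\sum_{0<|i|\le n-1} e_{i}\otimes S_i$ with $S_i\in\mpee(\oss)$ (here $e_i$ denotes the image in $\tn/\kj_n$ of the relevant off-diagonal matrix unit), so any argument handling scalars over $\oss$ handles matrices over $\oss$ by absorbing the matrix index into the coefficient. Then, assuming $\tn\omin\oss=\tn\oc\oss$ is \emph{not} needed here --- what we want is the statement ``$\phi\otimes{\rm id}_\oss$ is a complete quotient map'' --- I would argue: by Lemma \ref{diagram lemma} applied to the commuting square with $\osr=\tn\omin\oss$, $\oss=(\tn/\kj_n)\omin\oss$ obtained via $q\otimes{\rm id}_\oss$, $\osu=\oss_{n-1}\omin\oss$ reached via $\phi\otimes{\rm id}_\oss$, and bottom map $\dot\phi\otimes{\rm id}_\oss$ (a complete order isomorphism on the min level because $\dot\phi$ is a complete order isomorphism of operator systems and min is functorial), $\phi\otimes{\rm id}_\oss$ is a complete quotient map if and only if $q\otimes{\rm id}_\oss:\tn\omin\oss\rightarrow(\tn/\kj_n)\omin\oss$ is. Actually the cleanest route avoids even that: since $\dot\phi$ is a complete order isomorphism $\tn/\kj_n\cong\oss_{n-1}$, $\phi\otimes{\rm id}_\oss$ is a complete quotient map precisely when $q\otimes{\rm id}_\oss$ is, and the latter is exactly the assertion that $\mathcal C_1(\,\cdot\,)=\mathcal D_1(\,\cdot\,)$-style lifting holds, which unwinds to property $\psn$.

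Concretely, for \eqref{psn-1}$\Rightarrow$\eqref{psn-2}: take $h\in((\tn/\kj_n)\omin\oss)_+$, push it forward by $\dot\phi\otimes{\rm id}_\oss$ to get $\sum_{|i|\le n-1}u_i\otimes S_i\in(\cstar(\fnl)\omin\oss)_+$ (using $\phi(E_{i,i+1})=\tfrac1n u_i$, absorbing the $\tfrac1n$ into $S_i$), invoke property $\psn$ to obtain, for each $\varepsilon>0$, a positive $R_\varepsilon=[R_{ij}^\varepsilon]\in\mn(\oss)_+=(\tn\omax\oss)_+$ with the prescribed off-tridiagonal entries zero, off-diagonal entries $S_i$, and diagonal sum $S_0+\varepsilon 1$; then $q\otimes{\rm id}_\oss$ (ucp as a map $\tn\omax\oss\rightarrow(\tn/\kj_n)\omax\oss$) sends $R_\varepsilon$ to $h+\varepsilon(\dot1\otimes1_\oss)\in((\tn/\kj_n)\omax\oss)_+$, and the Archimedean property gives $h\in((\tn/\kj_n)\omax\oss)_+$; since $\dot\phi\otimes{\rm id}$ is a complete order isomorphism on max, this shows $\phi\otimes{\rm id}_\oss$ maps positives onto positives, hence is a complete quotient map. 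For \eqref{psn-2}$\Rightarrow$\eqref{psn-1}: if $\sum u_i\otimes S_i\ge0$ in $\cstar(\fnl)\omin\oss$, pull back along $\dot\phi\otimes{\rm id}$ to a positive $h$ in $(\tn/\kj_n)\omin\oss$; by hypothesis $h=(\phi\otimes{\rm id}_\oss)(R)$ for some $R\in(\tn\omax\oss)_+=\mn(\oss)_+$, and since $\ker(\phi\otimes{\rm id}_\oss)$ contains $\kj_n\otimes\oss$, adjusting $R$ by a selfadjoint element of $\mn(\kj_n)\otimes\oss$ and invoking the complete order proximinality of $\kj_n\subset\tn$ (so that $\mathcal D=\mathcal C$ and no $\varepsilon$ is actually lost --- but I would keep the $\varepsilon$ anyway to match the statement of $\psn$) produces exactly the $R_\varepsilon$ demanded by property $\psn$. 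The main obstacle, and the point that needs care rather than cleverness, is the bookkeeping that translates ``$R$ lifts $h$ along $\phi\otimes{\rm id}_\oss$'' into the three precise conditions in the definition of $\psn$ --- namely that the freedom in the lift is exactly a selfadjoint element of $\mn(\kj_n)\otimes\oss$, so that the off-tridiagonal entries can be killed, the off-diagonal entries pinned to $S_i$, and the diagonal entries forced to sum to $n\cdot(\text{coefficient of }\dot1)$; this is precisely the content of Corollary \ref{pos s} read with coefficients in $\oss$ rather than in $\mpee$, and it should be cited there.
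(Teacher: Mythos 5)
Your translation of statement \eqref{psn-2} into an $\varepsilon$-lifting condition is the same one the paper uses, and the converse direction of your argument is essentially right, but the forward direction as written contains a genuine error. You assert $R_\varepsilon\in\mn(\oss)_+=(\tn\omax\oss)_+$ and then push $R_\varepsilon$ through $q\otimes{\rm id}_\oss:\tn\omax\oss\rightarrow(\tn/\kj_n)\omax\oss$ to conclude that $h$ is positive in $(\tn/\kj_n)\omax\oss$. The identification $\mn(\oss)_+=(\mn\omax\oss)_+$ is valid for the full matrix algebra, but $\tn$ is a proper operator subsystem and the paper only establishes that $\tn$ is {\rm (min,c)}-nuclear (Proposition \ref{tn is (min,c)-nucl}); a tridiagonal matrix that is positive in $\mn(\oss)$ is positive in $\tn\omin\oss$ (the min cone on $\tn\otimes\oss$ is the restriction of the min cone of $\mn\omin\oss$), but there is no justification for its positivity in $\tn\omax\oss$. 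Consequently the positivity of $q\otimes{\rm id}_\oss(R_\varepsilon)$ in $(\tn/\kj_n)\omax\oss$ does not follow, and in any case the conclusion you are steering toward there --- $h\in((\tn/\kj_n)\omax\oss)_+$, i.e.\ $(\tn/\kj_n)\omin\oss=(\tn/\kj_n)\omax\oss$ --- is stronger than what \eqref{psn-2} asserts, which concerns only the operator system quotient of $\tn\omin\oss$ by $\ker(\phi\otimes{\rm id}_\oss)=\kj_n\otimes\oss$. This is exactly where importing the template of Proposition \ref{pstar-1 pf} breaks down: there the nuclearity of $\mn$ legitimises the passage through the maximal tensor product, and for $\tn$ it does not.

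The repair is shorter than the detour and is what the paper does: the matrices $R_\varepsilon$ produced by property $\psn$ are already the required witnesses. Writing $nR_\varepsilon=\varepsilon n(1\otimes 1)+X+K_\varepsilon$ with $X$ the canonical tridiagonal lift of $Z=\sum_i u_i\otimes S_i$ and $K_\varepsilon$ diagonal with vanishing trace sum in the $\tn$-leg, one has $K_\varepsilon\in\kj_n\otimes\mpee(\oss)$, and $nR_\varepsilon$, being tridiagonal and positive in $\mn(\mpee(\oss))$, is positive in $\tn\omin\mpee(\oss)$; by the definition of the quotient cones this says precisely that $\dot X$ is positive in $(\tn\omin\mpee(\oss))/(\kj_n\otimes\mpee(\oss))$, which, composed with the complete order isomorphism induced by $\dot\phi$ and the identification $\mpee(\tn\omin\oss)\cong\tn\omin\mpee(\oss)$, is the complete quotient property of $\phi\otimes{\rm id}_\oss$ --- no maximal tensor products enter at any point. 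In the converse direction your argument goes through once the lift's positivity is read in $\tn\omin\mpee(\oss)$ (equivalently in $\mn(\mpee(\oss))$) rather than in $\tn\omax\oss$, and your decision to retain the $\varepsilon$ is not merely cosmetic: an exact lifting would require complete order proximinality of $\kj_n\otimes\oss$ inside $\tn\omin\oss$, which Theorem \ref{tn 1} provides only for $\kj_n\subset\tn$ and not for general $\oss$, whereas the quotient cone definition hands you the $\varepsilon$-perturbed lift for free.
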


\begin{proof} To set up the argument, note that $\mpee(\osr\omin\ost)$ and $\osr\omin\mpee(\ost)$ are completely order isomorphic
operator systems for any $p\in\mathbb N$ and operator systems $\osr$ and $\ost$. If
 $Z\in \oss_{n-1}\otimes \mpee(\oss)$ is arbitrary, then 
$Z=\displaystyle\sum_{i=1-n}^{n-1}u_i\otimes S_{i}$ for some $S_i\in\mpee(\oss)$. In this case
$Z=[\phi\otimes {\rm id}_\oss]^{(p)}(X)$, where (one choice of)
$X\in \tn\otimes\mpee(\oss)$ is given by
\[
X\,=\,\sum_{j=1}^n E_{jj}\otimes S_0 \,+\, \sum_{i=1}^n E_{i, i+1}\otimes nS_i\,+\,\sum_{i=1}^n E_{i+1,i}\otimes nS_{-i}\,.
\]
Via the First Isomorphism Theorem, 
$Z$ is the image of the quotient element $\dot{X}$.

Choose $p$ and suppose that  $Z\in \oss_{n-1}\omin \mpee(\oss)$
is positive. By definition, the element $\dot{X}$ is positive
if and only if
for every $\varepsilon>0$ there is a $K_\varepsilon=[K_{ij}^\varepsilon]_{i,j=1}^n\in\tn\otimes\mpee(\oss)$ such that
(i) $K_\varepsilon$ is diagonal,
(ii) $\sum_{i=1}^n K_{ii}^\varepsilon=0$, and (iii)
$\tilde R_\varepsilon=
\varepsilon1+X+K_\varepsilon$ is positive in $\tn\omin\mpee(\oss)$. But $\tilde R_\varepsilon$ is positive
if and only if $R_\varepsilon=\frac{1}{n}\tilde R_\varepsilon$ is positive. Note that $R_\varepsilon=[R_{ij}^\varepsilon]_{i,j}$ is tridiagonal
with $\sum_i R_{ii}^\varepsilon=\varepsilon1_{\mpee(\oss)}+S_0$, $R_{i,i+1}^\varepsilon=S_i$, and $R_{i+1,i}^\varepsilon=S_{-i}$.

Thus, if $\oss$ has property $\psn$, 
then the positivity of $R_\epsilon$ for every $\varepsilon>0$  implies that $\dot{X}$ is positive. Conversely, if $\phi\otimes{\rm id}_\oss$
is a complete quotient map, then the positivity of $Z$ implies that of $\dot{X}$ and, hence, the positivity of $R_\varepsilon$ also, for every $\varepsilon>0$.
\end{proof}

\begin{theorem}\label{WEP thm} The following statements are equivalent for a unital C$^*$-algebra
$\csta$:
\begin{enumerate}
\item\label{wep-1} $\csta$ has property $\ps$;
\item\label{wep-2} $\csta$ has WEP.
\end{enumerate}
\end{theorem}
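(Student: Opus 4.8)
The plan is to convert property $\ps$ into a statement about the minimal and commuting tensor products of the finite-dimensional operator systems $\oss_{n-1}$, and then to invoke the operator-system description of the weak expectation property. By Proposition~\ref{prop psn}, for a fixed $n$ the C$^*$-algebra $\csta$ has property $\psn$ if and only if the map $\phi\otimes{\rm id}_\csta:\tn\omin\csta\rightarrow\oss_{n-1}\omin\csta$ is a complete quotient map, where $\phi:\tn\rightarrow\oss_{n-1}$ is the complete quotient map of Theorem~\ref{tn 1}. The crux of the argument is therefore the claim that, for every $n$, the C$^*$-algebra $\csta$ has property $\psn$ if and only if $\oss_{n-1}\omin\csta=\oss_{n-1}\oc\csta$.

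First I would establish the forward implication of this claim. Since $\tn$ is {\rm (min,c)}-nuclear (Proposition~\ref{tn is (min,c)-nucl}) we have $\tn\omin\csta=\tn\oc\csta$ as operator systems, and since $\phi$ is completely positive so is $\phi\otimes{\rm id}_\csta:\tn\oc\csta\rightarrow\oss_{n-1}\oc\csta$. Fix $p\in\mathbb N$ and let $y\in\mpee(\oss_{n-1}\omin\csta)_+$. Assuming property $\psn$, the map $\phi\otimes{\rm id}_\csta:\tn\omin\csta\rightarrow\oss_{n-1}\omin\csta$ is a complete quotient map, so there is a selfadjoint $X$ with $(\phi\otimes{\rm id}_\csta)(X)=y$ such that for every $\varepsilon>0$ one may choose a selfadjoint $K_\varepsilon\in\ker(\phi\otimes{\rm id}_\csta)$ with $x_\varepsilon:=\varepsilon 1+X+K_\varepsilon\in\mpee(\tn\omin\csta)_+=\mpee(\tn\oc\csta)_+$. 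Applying the completely positive map $\phi\otimes{\rm id}_\csta:\tn\oc\csta\rightarrow\oss_{n-1}\oc\csta$ gives $y+\varepsilon 1=(\phi\otimes{\rm id}_\csta)(x_\varepsilon)\in\mpee(\oss_{n-1}\oc\csta)_+$ for all $\varepsilon>0$, and the Archimedean property of the cone then forces $y\in\mpee(\oss_{n-1}\oc\csta)_+$. Since $\mpee(\oss_{n-1}\oc\csta)_+\subseteq\mpee(\oss_{n-1}\omin\csta)_+$ always holds, this yields $\oss_{n-1}\omin\csta=\oss_{n-1}\oc\csta$.

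The reverse implication is where I expect the main obstacle to lie. What is required is the commuting-tensor-product analogue of Proposition~\ref{max quo}, namely that $\phi\otimes{\rm id}_\csta:\tn\oc\csta\rightarrow\oss_{n-1}\oc\csta$ is again a complete quotient map. I would prove this by adapting the argument of Proposition~\ref{max quo}: because $\dot{\phi}:\tn/\kj_n\rightarrow\oss_{n-1}$ is a complete order isomorphism, it suffices to identify $(\tn\oc\csta)/\ker(\phi\otimes{\rm id}_\csta)$ with $(\tn/\kj_n)\oc\csta$ completely order isomorphically, using the universal property of the commuting tensor product for commuting pairs of completely positive maps together with the fact (Theorem~\ref{tn 1}) that $\kj_n$ is completely order proximinal in $\tn$, so that positive elements lift by Proposition~\ref{lifting positives}. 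Granting this, if $\oss_{n-1}\omin\csta=\oss_{n-1}\oc\csta$ then, since also $\tn\oc\csta=\tn\omin\csta$, the map $\phi\otimes{\rm id}_\csta:\tn\omin\csta\rightarrow\oss_{n-1}\omin\csta$ is a complete quotient map, which by Proposition~\ref{prop psn} is precisely property $\psn$.

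Finally, letting $n$ range over $\mathbb N$, this equivalence shows that $\csta$ has property $\ps$ if and only if $\oss_m\omin\csta=\oss_m\oc\csta$ for every $m\in\mathbb N$. By the operator-system characterisation of the weak expectation property \cite[Theorem~9.14]{kavruk--paulsen--todorov--tomforde2010}, this last condition is equivalent to $\csta$ having WEP, completing the proof.
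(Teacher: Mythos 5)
Your reduction of property $\ps$ to the family of identities $\oss_{n-1}\omin\csta=\oss_{n-1}\oc\csta$ is close to the paper's own strategy (Proposition \ref{prop psn} plus the quotient machinery), and the forward half of that reduction is argued correctly. The genuine gap is the final step, where you dispose of the equivalence between ``$\oss_m\omin\csta=\oss_m\oc\csta$ for every $m$'' and WEP of $\csta$ by citing \cite[Theorem 9.14]{kavruk--paulsen--todorov--tomforde2010}. As that theorem is used in this paper (Theorem \ref{s is better}), it states that $\oss_n\omin\oss_n=\oss_n\oc\oss_n$ for all $n$ is equivalent to $\cstar(\fni)$ having WEP, i.e.\ it is a reformulation of the Kirchberg problem; it is not a characterisation of WEP of an arbitrary unital C$^*$-algebra in terms of tensoring with the systems $\oss_m$. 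In particular the implication you need for \eqref{wep-1}$\Rightarrow$\eqref{wep-2} --- from the finite-dimensional tensor identities to WEP --- is never argued, and it is precisely the substantive content of the theorem: one must pass from $\oss_{n-1}\omin\csta=\oss_{n-1}\oc\csta=\oss_{n-1}\omax\csta$ to $\cstar(\fnl)\omin\csta=\cstar(\fnl)\omax\csta$ (using that $\oss_{n-1}$ contains unitaries generating $\cstar(\fnl)$, via a multiplicative-domain/commutant-lifting argument as in Proposition \ref{pstar-4 pf}), then to $\cstar(\fni)\omin\csta=\cstar(\fni)\omax\csta$ by the direct limit argument of Corollary \ref{k thm}, and finally invoke Kirchberg's tensorial characterisation of WEP \cite{kirchberg1993}. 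This is how the paper completes the argument; your appeal to Theorem 9.14 does not cover it. (The converse direction, WEP $\Rightarrow$ the identities, also rests on the same citation in your write-up; it is the easier half, obtained as in the paper from $\cstar(\fni)\omin\csta=\cstar(\fni)\omax\csta$, \cite[Lemma 7.5]{kavruk--paulsen--todorov--tomforde2010}, and the inclusion $\oss_{n-1}\omin\csta\subseteq\cstar(\fnl)\omin\csta$.)

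A secondary repair: you do not need, and should not attempt, a commuting-tensor analogue of Proposition \ref{max quo} proved by imitating its argument. The universal property of $\oc$ concerns pairs of completely positive maps with commuting ranges in some $\bh$, so the extension step in that proof does not transfer when the target is a quotient operator system. Instead, since $\csta$ is a unital C$^*$-algebra it is (c,max)-nuclear \cite[Theorem 6.7]{kavruk--paulsen--todorov--tomforde2009}, so $\tn\oc\csta=\tn\omax\csta$ and $\oss_{n-1}\oc\csta=\oss_{n-1}\omax\csta$, and the map $\phi\otimes{\rm id}_\csta:\tn\oc\csta\rightarrow\oss_{n-1}\oc\csta$ is a complete quotient map directly by Proposition \ref{max quo}; combined with Proposition \ref{tn is (min,c)-nucl} this yields the reverse half of your key claim exactly as in the paper's diagram argument (Lemma \ref{diagram lemma}).
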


\begin{proof} 
Assume \eqref{wep-1} holds and consider
the following commutative diagram of vector spaces and linear transformations:
\[
\begin{CD}
\tn\omin\csta   @>{ \cong}>>\tn\omax\csta \\
@V{\phi\otimes{\rm id}}VV           @VV{\phi\otimes{\rm id}}V  \\
\oss_{n-1}\omin\csta   @>>{{\rm id}_{\oss_{n-1}\otimes\csta}}> \oss_{n-1}\omax\csta\,.
\end{CD}
\]
The top arrow is a complete order isomorphism by Proposition \ref{tn is (min,c)-nucl} and 
\cite[Theorem 6.7] {kavruk--paulsen--todorov--tomforde2009}, and the leftmost arrow ($\phi\otimes{\rm id}$)
is a complete quotient map, by hypothesis. 
By Proposition~\ref{max quo}, the righthand arrow is also a quotient
map. Now apply Lemma~\ref{diagram lemma} to conclude that the bottom
arrow is a complete order isomorphism.
 
Thus, $\oss_{n-1}\omin\csta=\oss_{n-1}\omax\csta$, for all $n\in\mathbb N$. By the direct limit argument
of Corollary \ref{k thm} we arrive at $\cstar(\fni)\omin\csta=\cstar(\fni)\omax\csta$, which is equivalent to $\csta$
having WEP. Thus, statement \eqref{wep-1} is established.

Next assume \eqref{wep-2}. Thus, $\cstar(\fni)\omin\csta=\cstar(\fni)\omax\csta$, by \cite[Proposition 1.1(iii)]{kirchberg1993},
and so $\cstar(\fn)\omin\csta=\cstar(\fn)\omax\csta$, for every $n$ \cite[Lemma 7.5]{kavruk--paulsen--todorov--tomforde2010}.
Therefore, for every $n$,
\[
\oss_n\omin\csta\,\subseteq_{\rm coi}\cstar(\fn)\omin\csta\,=\,\cstar(\fn)\omax\csta\,.
\]
By the inclusion and equality above, 
\[
\oss_n\omin\csta\,=\,\oss_n\oc\csta\,=\,\oss_n\omax\csta\,.
\]
Every unital C$^*$-algebra is (c,max)-nuclear \cite[Theorem 6.7] {kavruk--paulsen--todorov--tomforde2009}. Thus,
using the fact that $\tn$ is (min,c)-nuclear, we deduce that
\[
\tn\omin\csta\,=\,\tn\oc\csta\,=\,\tn\omax\csta\,.
\]
We are therefore led to the following commutative diagram:
\[
\begin{CD}
\tn\omin\csta   @>{ \cong}>>\tn\omax\csta \\
@V{\phi\otimes{\rm id}}VV           @VV{\phi\otimes{\rm id}\quad\mbox{\rm (complete quotient map)}}V  \\
\oss_{n-1}\omin\csta   @>>{\cong}> \oss_{n-1}\omax\csta\,.
\end{CD}
\]
Hence, by Lemma \ref{diagram lemma} the map $\phi\otimes{\rm id}:\tn\omin\csta\rightarrow\oss_{n-1}\omin\csta$
is a complete quotient map for every $n$, and so $\csta$ has property $\ps$, which proves \eqref{wep-1}.
\end{proof}
 
By similar methods we can also prove:

\begin{theorem} If $\oss_{n-1}$ has property $\psn$ for every $n\in\mathbb N$, then Kirchberg's Problem has an affirmative solution.
\end{theorem}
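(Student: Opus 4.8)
The plan is to run the same kind of diagram-chasing argument that established Theorem~\ref{WEP thm} and Theorem~\ref{s1-kp}, but now feeding in the hypothesis as a statement about $\oss_{n-1}$ rather than about a C$^*$-algebra. The key observation is that property $\psn$ for $\oss_{n-1}$ is, by Proposition~\ref{prop psn}, exactly the assertion that $\phi\otimes{\rm id}_{\oss_{n-1}}:\tn\omin\oss_{n-1}\rightarrow\oss_{n-1}\omin\oss_{n-1}$ is a complete quotient map. So the hypothesis is directly about the tensor product $\oss_{n-1}\omin\oss_{n-1}$, which is precisely the object appearing in Theorem~\ref{s is better}. The strategy is therefore: (i) use the hypothesis to get that $\phi\otimes{\rm id}_{\oss_{n-1}}$ is a complete quotient map; (ii) combine this with the (min,c)-nuclearity of $\tn$ (Proposition~\ref{tn is (min,c)-nucl}) to upgrade positivity in $\oss_{n-1}\omin\oss_{n-1}$ to positivity in $\oss_{n-1}\oc\oss_{n-1}$; (iii) conclude $\oss_{n-1}\omin\oss_{n-1}=\oss_{n-1}\oc\oss_{n-1}$ for every $n$, and invoke Theorem~\ref{s is better}.

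In more detail: first I would fix $n$ and assume $\oss_{n-1}$ has property $\psn$. By Proposition~\ref{prop psn} this means $\phi\otimes{\rm id}_{\oss_{n-1}}:\tn\omin\oss_{n-1}\rightarrow\oss_{n-1}\omin\oss_{n-1}$ is a complete quotient map. Now take any $y\in\mpee(\oss_{n-1}\omin\oss_{n-1})_+$. Since $\phi\otimes{\rm id}_{\oss_{n-1}}$ is a complete quotient map, $y=(\phi\otimes{\rm id}_{\oss_{n-1}})^{(p)}(x)$ for some $x$ with $x\in\mpee(\tn\omin\oss_{n-1})_+$ (using that $\ker\phi=\kj_n$ is completely order proximinal by Theorem~\ref{tn 1}, together with Proposition~\ref{lifting positives}). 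By Proposition~\ref{tn is (min,c)-nucl}, $\tn\omin\oss_{n-1}=\tn\oc\oss_{n-1}$, so $x\in\mpee(\tn\oc\oss_{n-1})_+$. The map $\phi\otimes{\rm id}_{\oss_{n-1}}:\tn\oc\oss_{n-1}\rightarrow\oss_{n-1}\oc\oss_{n-1}$ is completely positive (functoriality of $\oc$ in the first variable under cp maps), hence $y=(\phi\otimes{\rm id}_{\oss_{n-1}})^{(p)}(x)\in\mpee(\oss_{n-1}\oc\oss_{n-1})_+$. Since $\mpee(\oss_{n-1}\oc\oss_{n-1})_+\subseteq\mpee(\oss_{n-1}\omin\oss_{n-1})_+$ always holds, we obtain $\oss_{n-1}\omin\oss_{n-1}=\oss_{n-1}\oc\oss_{n-1}$.

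If this holds for every $n\in\mathbb N$, then $\oss_m\omin\oss_m=\oss_m\oc\oss_m$ for every $m\in\mathbb N$, and Theorem~\ref{s is better} immediately gives an affirmative answer to the Kirchberg Problem. The main obstacle—or rather the only point requiring care—is step (ii): making sure that the lift $x$ of $y$ provided by the complete quotient map can be taken \emph{positive} in $\mpee(\tn\omin\oss_{n-1})$, which is why complete order proximinality of $\kj_n\subset\tn$ (Theorem~\ref{tn 1}(2)) is needed via Proposition~\ref{lifting positives}; without it the complete quotient map only guarantees approximate positive lifts, and one would instead argue with $\varepsilon$-perturbations and the Archimedean property, exactly as in the proof of Proposition~\ref{prop psn}. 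This is entirely routine given the machinery already developed. One could alternatively phrase the whole argument through the commutative-diagram Lemma~\ref{diagram lemma} exactly as in Theorem~\ref{s1-kp}, but the direct positivity-lifting argument above is the cleanest.
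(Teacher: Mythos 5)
Your proof is correct and is essentially the argument the paper intends by ``similar methods'': it is the concluding step of the proof of Theorem \ref{s1-kp} (lift a min-positive element through the complete quotient map supplied by Proposition \ref{prop psn}, use the (min,c)-nuclearity of $\tn$ from Proposition \ref{tn is (min,c)-nucl} to transfer positivity to the commuting tensor product, conclude $\oss_{n-1}\omin\oss_{n-1}=\oss_{n-1}\oc\oss_{n-1}$ for every $n$, and invoke Theorem \ref{s is better}), applied here with $\phi\otimes{\rm id}_{\oss_{n-1}}$. The one imprecision---citing Proposition \ref{lifting positives} via proximinality of $\kj_n\subset\tn$ when the relevant kernel is $\kj_n\otimes\oss_{n-1}$ inside $\tn\omin\oss_{n-1}$---you already neutralize with the $\varepsilon$-perturbation and Archimedean-property fallback, so no gap remains.
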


\section{Injective Envelopes}\label{S:ie}
In the earlier sections we have seen that operator system quotients of matrix spaces can have large $C^*$-envelopes. 
In this section, we examine their injective envelopes. 

Recall that the injective envelope \cite[Chapter 15]{Paulsen-book}
of a unital C$^*$-algebra $\csta$ is denoted by $\ia$. An AW$^*$-algebra is a unital 
C$^*$-algebra $\cstb$ with the property that, for any nonempty subset $\mathcal X\subset\cstb$, there is a
projection $e\in\cstb$ such that ${\rm ann}_R(\mathcal X)=\{ey\,|\,y\in\cstb\}$, where
\[ {\rm ann}_R(\mathcal X)\;=\;\{b\in\cstb\,|\,xb=0,\;\forall\,x\in\mathcal X\}\,.\]
Like von Neumann algebras, AW$^*$-algebras admit a decomposition into direct sums of AW$^*$-algebras 
of types I, II, and III. An AW$^*$-factor is an AW$^*$-algebra with trivial centre. 
The theory of AW$^*$-algebras is relevant
to the study of injective envelopes because every injective C$^*$-algebra is monotone complete and
every monotone complete C$^*$-algebra is an AW$^*$-algebra. 
 
 \begin{theorem} If $\csta\neq\mathbb C$ is a unital, separable, prime C$^*$-algebra with only trivial projections, then its injective envelope
 $\ia$ is a type III AW$^*$-factor with no normal states.
 \end{theorem}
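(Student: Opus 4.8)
The plan is to verify, in order, that $\ia$ is a factor, that it is of type III, and that it has no normal states, using the two facts recalled just above the statement --- injectivity forces $\ia$ to be monotone complete, hence an AW$^*$-algebra, with the usual decomposition into types --- together with the rigidity, and hence \emph{essentiality}, of the inclusion $\csta\subseteq\ia$. Here essentiality means that any ucp map out of $\ia$ which is completely isometric on $\csta$ is completely isometric on all of $\ia$; this follows from rigidity by composing such a map with a ucp extension (into $\ia$) of its inverse on $\csta$ and applying rigidity to the resulting self-map of $\ia$. In particular, every nonzero closed ideal of $\ia$ must meet $\csta$ nontrivially (apply essentiality to the associated quotient $*$-homomorphism).

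With essentiality in hand I would first show $\ia$ is a factor. Since $Z(\ia)$ is a commutative AW$^*$-algebra it is the closed linear span of its projections, so a nontrivial centre would contain a projection $p$ with $0\neq p\neq 1$. The maps $x\mapsto px$ and $x\mapsto(1-p)x$ are unital $*$-homomorphisms of $\ia$; if the first were injective on $\csta$ it would be completely isometric there, hence on $\ia$ by essentiality, forcing $(1-p)\ia=0$, and similarly for the second. So $J_{+}=\csta\cap(1-p)\ia$ and $J_{-}=\csta\cap p\ia$ would be nonzero closed two-sided ideals of $\csta$ with $J_{+}J_{-}\subseteq p(1-p)\ia=0$, contradicting primeness; hence $\ia$ is an AW$^*$-factor. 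Type I is then excluded using the projection hypothesis: a type I AW$^*$-factor is $*$-isomorphic to some $\mathcal B(\hil)$, and if $\dim\hil<\infty$ then $\csta$, being a unital subalgebra of a matrix algebra, is finite dimensional --- but a finite-dimensional unital $\cstar$-algebra other than $\mathbb C$ has a projection $\neq 0,1$; while if $\dim\hil=\infty$ then $\mathcal K(\hil)$ is the smallest nonzero closed ideal of $\mathcal B(\hil)$, so by essentiality $\csta\cap\mathcal K(\hil)$ is a nonzero $\cstar$-algebra of compact operators, which contains a nonzero finite-rank, hence proper, projection. Either way the hypotheses on $\csta$ are contradicted, so $\ia$ is of type II or III.

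The remaining --- and principal --- difficulty is to exclude the type II cases and to show that $\ia$ carries no normal states; here the separability of $\csta$ together with the absence of nontrivial projections must be used in an essential way, since without them there are counterexamples (the hyperfinite $\mathrm{II}_1$-factor, for instance, is its own injective envelope). I would invoke Hamana's structure theory for injective envelopes and regular monotone completions to conclude that, for $\csta$ prime with only trivial projections and $\csta\neq\mathbb C$, the factor $\ia$ has every nonzero projection infinite --- so it is of type III --- and is not a von Neumann algebra. Granting this, ``no normal states'' is then formal: a normal state $\omega$ on $\ia$ would have support projection $p\neq 0$, which in a type III factor is infinite and hence Murray--von Neumann equivalent to $1$, so $\ia\cong p\,\ia\,p$ would carry the faithful normal state $\omega|_{p\ia p}$ and therefore be a von Neumann algebra, a contradiction. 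The main obstacle is exactly this last step --- the type II exclusion and the wildness of $\ia$ --- which genuinely requires the finer theory of monotone complete $\cstar$-algebras rather than the soft rigidity arguments that dispatch the factor and type I claims.
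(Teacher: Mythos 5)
Your preliminary steps are fine: the rigidity-to-essentiality argument is correct, your central-projection/primeness argument for factoriality is a legitimate (and more self-contained) alternative to the paper's citation of Hamana's Theorem 7.1, and your type~I exclusion works. But the theorem's real content --- ruling out type~II and showing $\ia$ is not a W$^*$-algebra (which is what ``no normal states'' ultimately rests on) --- is exactly the step you do not prove. You ``invoke Hamana's structure theory'' to conclude that for separable, prime, projectionless $\csta\neq\mathbb C$ every nonzero projection of $\ia$ is infinite and $\ia$ is not von Neumann; stated in that form this is essentially the theorem itself, not an off-the-shelf citation, and you give no argument for it. Notice also that separability of $\csta$, which is essential (as your own hyperfinite II$_1$ example shows), never enters your argument in any concrete way.

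The paper closes this gap with two specific ingredients, both driven by separability. First, if $\ia$ were a von Neumann algebra, separability of $\csta$ would force a nontrivial projection in $\csta$ (Argerami--Farenick, or Hamana); so $\ia$ is not a W$^*$-algebra, which already kills type~I and later yields ``no normal states.'' Second, represent $\csta$ faithfully on a separable $\hil$, realize $\ia$ as the range $\osi$ of a ucp projection $\phi$ on $\bh$ with the Choi--Effros product, take a faithful state $\omega$ on $\bh$, and check via the Schwarz inequality that $\varphi=\omega\circ\phi$ is a \emph{faithful} state on $\ia$. Then the Elliott--Sait\^o--Wright theorem --- a type~II AW$^*$-algebra admitting a faithful state is a von Neumann algebra --- excludes type~II, leaving type~III; and a normal state would produce a von Neumann direct summand, impossible for a non-W$^*$ factor. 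Without the faithful-state construction (or some substitute making genuine use of separability) your proposal has no mechanism to exclude type~II, so as it stands the proof is incomplete at its central point. (Your closing ``support projection equivalent to $1$'' argument also silently needs the nontrivial fact that a monotone complete C$^*$-algebra with a faithful normal state is von Neumann, so even that step requires a citation comparable to the ones above.)
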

 
 \begin{proof} Under the hypothesis given, the injective envelope of $\csta$ cannot be a W$^*$-algebra
 (that is, a von Neumann algebra): for if $\ia$ were a von Neumann algebra, then the separability of $\csta$ would imply that 
 $\csta$ has a nontrivial projection (see \cite[Theorem 2.2(v)]{argerami--farenick2008} or \cite[\S3]{hamana1982c}), contrary
 to the hypothesis.
 
 As $\csta$ is separable, $\csta$ has a faithful representation as a unital C$^*$-subalgebra of $\bh$ for some separable Hilbert space $\hil$.
 The injective envelope of $\csta$ is obtained as follows \cite{Paulsen-book}: there is a ucp projection $\phi:\bh\rightarrow\bh$ 
mapping onto an operator system $\osi$ that contains $\csta$ as a subsystem and such that $\osi$ is an 
injective C$^*$-algebra under the Choi--Effros product
$x\circ y=\phi(xy)$, $x,y\in\osi$ \cite{choi--effros1977,Paulsen-book}. The injective envelope $\ia$ of $\csta$, 
when $\ia$ is viewed as a C$^*$-algebra, is precisely the operator system
$\osi$ with the Choi--Effros product $\circ$.
 
 Because $\hil$ is separable, $\bh$ has a faithful state $\omega$. Let $\varphi$ be the state on $\osi$ given by $\varphi(x)=\omega\left(\phi(x)\right)$.
 By the Schwarz inequality, $\varphi(x^*\circ x)=\omega\left(\phi(x^*x)\right)\geq\omega\left(\phi(x)^*\phi(x)\right)$, which implies that $\varphi$ is
 a faithful state on the AW$^*$-algebra $\ia$.
 
 The hypothesis that $\csta$ is prime (that is, no two nonzero ideals can have zero intersection) implies that the injective envelope $\ia$
 is an AW$^*$-factor \cite[Theorem 7.1]{hamana1981}. 
 We show that the only type of factor that $\ia$ could possibly be is a factor of type III.
 
 If $\ia$ were a factor of type I, then $\ia$ would be coincide with $\bh$ for some Hilbert space $\hil$, implying that $\ia$
 is a von Neumann algebra, which we have argued is not possible. If $\ia$ were a factor of type II (finite) or II${}_\infty$, then
 the fact that $\ia$ admits a faithful state implies that $\ia$ is a von Neumann algebra \cite{elliott--saito--wright1983},
 which again is not possible.
 Therefore, $\ia$ must be of type III.
 
 Finally, if $\ia$ were to have a normal state, then $\ia$ would have a direct summand which is a von Neumann algebra. But the
 fact that $\ia$ is a factor discounts the existence of a nonzero direct summand, and because $\ia$ is not a W$^*$-algebra, we conclude
 that $\ia$ has no normal states.
 \end{proof}
 
 \begin{corollary} The injective envelope of $\cstar(\fn)$, where $n\geq2$, is a type III AW$^*$-factor with no normal states.
 \end{corollary}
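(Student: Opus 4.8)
The plan is to read off this corollary from the preceding theorem applied to $\csta=\cstar(\fn)$; the entire task then reduces to checking the five hypotheses of that theorem, namely that $\cstar(\fn)$ is unital, separable, prime, has only trivial projections, and is not $\mathbb C$.

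Three of these are immediate. Since $\fn$ is a discrete group, $\cstar(\fn)$ is unital with unit $\delta_e$; since $\fn$ is countable, $\ell^1(\fn)$ is a separable dense $*$-subalgebra, so $\cstar(\fn)$ is separable; and since $n\geq2$ the abelianisation homomorphism $\fn\to\mathbb Z^n$ induces a unital surjection $\cstar(\fn)\to\cstar(\mathbb Z^n)\cong C(\mathbb T^n)$, whence $\cstar(\fn)$ is infinite-dimensional and in particular $\cstar(\fn)\neq\mathbb C$.

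The two remaining hypotheses — primality and the absence of nontrivial projections — carry the real content, and I expect locating (or supplying) these facts to be the only obstacle. For primality it is enough, $\cstar(\fn)$ being separable, to observe that it is \emph{primitive}: one needs a unitary representation of $\fn$ that is at once irreducible and faithful on $\cstar(\fn)$ (equivalently, an irreducible representation weakly containing every unitary representation of $\fn$), and such a representation exists for $n\geq2$; I would cite this rather than reconstruct it. The statement that $\cstar(\fn)$ has no projections other than $0$ and $1$ is likewise a known structural property of the full group C$^*$-algebras of free groups, to be invoked from the literature. Granting these two facts, all hypotheses of the theorem hold for $\csta=\cstar(\fn)$, and the theorem delivers that $\mathcal I(\cstar(\fn))$ is a type III AW$^*$-factor with no normal states.
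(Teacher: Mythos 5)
Your proposal is correct and follows the paper's own route: the paper likewise verifies the hypotheses of the preceding theorem for $\cstar(\fn)$, citing Choi's result that $\cstar(\ftwo)$ (and hence the case $n\geq2$) is primitive, therefore prime, and has only trivial projections. The only difference is that you spell out the easy hypotheses (unital, separable, $\neq\mathbb C$) in more detail than the paper does.
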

 
 \begin{proof} If $n\geq 2$, then $\cstar(\fn)$ is unital, separable, primitive (and therefore prime), and has only trivial projections \cite{choi1980}.
 \end{proof}

 \begin{corollary} The injective envelopes of $\mn/\kj_n$ and $\tn/\kj_n$ coincide. When $n\geq2$, their injective envelope is a type III AW$^*$-factor with no normal states.
 \end{corollary}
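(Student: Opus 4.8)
The plan is to reduce both injective envelopes to the injective envelope of $\cstar(\fnl)$ and then invoke the preceding corollary. Throughout I use the standard fact (see \cite[Chapter~15]{Paulsen-book}) that an operator system and its C$^*$-envelope have the same injective envelope: $\mathcal I(\oss)=\mathcal I(\cstare(\oss))$, since $\oss\subseteq\cstare(\oss)\subseteq\mathcal I(\oss)$ and any operator system sandwiched between another and its injective envelope has the same injective envelope. By Theorem~\ref{mn mod jn}, $\cstare(\mn/\kj_n)=\cstar(\fnl)$, so $\mathcal I(\mn/\kj_n)=\mathcal I(\cstar(\fnl))$. Hence everything follows once I show $\cstare(\tn/\kj_n)=\cstar(\fnl)$ as well: then $\mathcal I(\tn/\kj_n)=\mathcal I(\cstar(\fnl))=\mathcal I(\mn/\kj_n)$, and, for $n\ge 2$, the preceding corollary (applied to $\cstar(\fnl)$) identifies this common injective envelope as a type~III AW$^*$-factor with no normal states.

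So the substantive step is to establish $\cstare(\tn/\kj_n)=\cstar(\fnl)$, and I would do this by repeating, for the subsystem $\tn/\kj_n$, the C$^*$-envelope computation already carried out for $\mn/\kj_n$ in the proof of Theorem~\ref{mn mod jn}. By Theorem~\ref{tn 1}, $\dot\phi$ restricts to a complete order isomorphism of $\tn/\kj_n$ onto $\oss_{n-1}=\mbox{Span}\{1,u_j,u_j^*:1\le j\le n-1\}\subseteq\cstar(\fnl)$, where the $u_j$ are universal unitaries generating $\cstar(\fnl)$ and $\dot\phi(ne_{i,i+1})=u_i$; moreover $\|ne_{i,i+1}\|=1$ by the argument of Lemma~\ref{norm} applied inside $\tn$. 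Embed $\tn/\kj_n$ completely isometrically in $\cstare(\tn/\kj_n)$, extend $\dot\phi:\tn/\kj_n\to\cstar(\fnl)\subseteq\bh$ to a ucp map $\varrho:\cstare(\tn/\kj_n)\to\bh$ by Arveson's theorem, and take a minimal Stinespring dilation $\varrho=v^*\pi v$. With respect to $\hil_\pi=\mbox{ran}\,v\oplus(\mbox{ran}\,v)^\perp$ the $(1,1)$-block of $\pi(ne_{i,i+1})$ is the unitary $u_i$; since $\|\pi(ne_{i,i+1})\|\le 1$ and $u_i$ is unitary, the off-diagonal blocks must vanish, so each $ne_{i,i+1}$ lies in the multiplicative domain of $\varrho$. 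As $\{ne_{i,i+1}\}$ together with $\dot 1$ generate $\cstare(\tn/\kj_n)$ as a C$^*$-algebra and the multiplicative domain is a C$^*$-subalgebra, $\varrho$ is a surjective $*$-homomorphism onto $\cstar(\fnl)$. Finally, because $\dot\phi$ is a complete order isomorphism, the induced isomorphism $\cstare(\tn/\kj_n)\cong\cstare(\oss_{n-1})$ shows that each $ne_{i,i+1}$ is a unitary in $\cstare(\tn/\kj_n)$, so the universal property of $\cstar(\fnl)$ yields a unital homomorphism $\beta:\cstar(\fnl)\to\cstare(\tn/\kj_n)$ with $\beta(u_i)=ne_{i,i+1}$; then $\beta\circ\varrho$ is the identity on generators, whence $\varrho$ is an isomorphism.

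The main obstacle is precisely this identification of $\cstare(\tn/\kj_n)$. One is tempted to argue directly from the complete order embedding $\tn/\kj_n\hookrightarrow\mn/\kj_n$ of Proposition~\ref{quo lemma}, but a ucp extension of the inclusion $\tn/\kj_n\hookrightarrow\mathcal I(\tn/\kj_n)$ to $\mn/\kj_n$ need not be a complete order injection, so this does not by itself force $\mathcal I(\tn/\kj_n)=\mathcal I(\mn/\kj_n)$; what is really needed is that $\tn/\kj_n$ already recovers all of $\cstar(\fnl)$ at the level of the C$^*$-envelope, which is exactly what the Stinespring/multiplicative-domain argument supplies. Once $\cstare(\tn/\kj_n)=\cstar(\fnl)=\cstare(\mn/\kj_n)$ is in hand, the remaining assertions of the corollary are immediate from the preceding corollary.
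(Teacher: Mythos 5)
Your proposal is correct and takes essentially the same route as the paper: both arguments reduce the two injective envelopes to $\mathcal I(\cstare(\cdot))=\mathcal I(\cstar(\fnl))$ and then quote the preceding corollary on injective envelopes of free group C$^*$-algebras. The only difference is that you explicitly verify $\cstare(\tn/\kj_n)=\cstar(\fnl)$ by rerunning the Stinespring/multiplicative-domain argument for the tridiagonal quotient, whereas the paper simply cites Theorem \ref{mn mod jn} for both C$^*$-envelope identifications even though that theorem as stated treats only $\mn/\kj_n$; your added verification is exactly the detail needed there.
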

 
 \begin{proof} The injective envelope of an operator system $\oss$ and its C$^*$-envelope $\cstare(\oss)$ coincide. Because
 $\cstare(\tn/\kj_n) = \cstare(\mn/\kj_n)=\cstar(\fnl)$ (Theorem \ref{mn mod jn}), the result follows.
 \end{proof}

\section*{Acknowledgement}

This work was undertaken at Institut Mittag-Leffler (Djursholm, Sweden) in autumn 2010. The authors wish to acknowledge the support and hospitality
of the institute during their stay there. The authors also wish to thank Ali S.~Kavruk for pointing out a mistake in an earlier version of this paper, 
Ivan Todorov for useful commentary, and the referee for a very careful review of the manuscript and for drawing our attention to
reference \cite{boca1997}.
The work of the first author is supported in part by NSERC.



\begin{thebibliography}{10}

\bibitem{argerami--farenick2008}
M.~Argerami and D.~R. Farenick, \emph{Local multiplier algebras, injective
  envelopes, and type {I} {$W^*$}-algebras}, J. Operator Theory \textbf{59}
  (2008), no.~2, 237--245. 

\bibitem{arveson1969}
W.~Arveson, \emph{Subalgebras of {$C\sp{\ast} $}-algebras}, Acta Math.
  \textbf{123} (1969), 141--224. 

\bibitem{boca1997}
F.~P. Boca, \emph{A note on full free product {$C^*$}-algebras, lifting and
  quasidiagonality}, Operator theory, operator algebras and related topics
  ({T}imi\c soara, 1996), Theta Found., Bucharest, 1997, pp.~51--63.

\bibitem{choi1980}
M.~D. Choi, \emph{The full {$C^{\ast} $}-algebra of the free group on two
  generators}, Pacific J. Math. \textbf{87} (1980), no.~1, 41--48. 

\bibitem{choi--effros1977}
M.~D. Choi and E.~G. Effros, \emph{Injectivity and operator spaces}, J.
  Functional Analysis \textbf{24} (1977), no.~2, 156--209. 

\bibitem{elliott--saito--wright1983}
G.~A. Elliott, K.~Sait{\=o}, and J.~D.~M. Wright, \emph{Embedding {$AW^{\ast}
  $}-algebras as double commutants in type {${\rm I}$} algebras}, J. London
  Math. Soc. (2) \textbf{28} (1983), no.~2, 376--384. 

\bibitem{hamana1981}
M.~Hamana, \emph{Regular embeddings of {$C\sp{\ast} $}-algebras in monotone
  complete {$C\sp{\ast} $}-algebras}, J. Math. Soc. Japan \textbf{33} (1981),
  no.~1, 159--183. 

\bibitem{hamana1982c}
\bysame, \emph{The centre of the regular monotone completion of a {$C\sp{\ast}
  $}-algebra}, J. London Math. Soc. (2) \textbf{26} (1982), no.~3, 522--530.
 
\bibitem{kavruk--paulsen--todorov--tomforde2010}
A.~S. Kavruk, V.~I. Paulsen, I.~G. Todorov, and M.~Tomforde, \emph{Quotients,
  exactness and nuclearity in the operator system category}, arXiv:1008.2811v2
  (2010), preprint.

\bibitem{kavruk--paulsen--todorov--tomforde2009}
\bysame, \emph{Tensor products of operator systems}, J. Funct. Anal.
  \textbf{261} (2011), no.~2, 267--299.

\bibitem{kirchberg1993}
E.~Kirchberg, \emph{On nonsemisplit extensions, tensor products and exactness
  of group {$C^*$}-algebras}, Invent. Math. \textbf{112} (1993), no.~3,
  449--489.  

\bibitem{kirchberg1994}
\bysame, \emph{Commutants of unitaries in {UHF} algebras and functorial
  properties of exactness}, J. Reine Angew. Math. \textbf{452} (1994), 39--77.
  

\bibitem{ozawa2004}
N.~Ozawa, \emph{About the {QWEP} conjecture}, Internat. J. Math. \textbf{15}
  (2004), no.~5, 501--530.  

\bibitem{Paulsen-book}
V.~Paulsen, \emph{Completely bounded maps and operator algebras}, Cambridge
  Studies in Advanced Mathematics, vol.~78, Cambridge University Press,
  Cambridge, 2002.  

\bibitem{paulsen--power--smith1989}
V.~I. Paulsen, S.~C. Power, and R.~R. Smith, \emph{Schur products and matrix
  completions}, J. Funct. Anal. \textbf{85} (1989), no.~1, 151--178.
  
\bibitem{paulsen--todorov--tomforde2010}
V.~I. Paulsen, I.~G. Todorov, and M.~Tomforde, \emph{Operator system structures
  on ordered spaces}, Proc. Lond. Math. Soc. (3) \textbf{102} (2011), no.~1,
  25--49.  

\bibitem{pisier1996}
G.~Pisier, \emph{A simple proof of a theorem of {K}irchberg and related results
  on {$C^*$}-norms}, J. Operator Theory \textbf{35} (1996), no.~2, 317--335.
 

\bibitem{Pisier-book}
\bysame, \emph{Introduction to operator space theory}, London Mathematical
  Society Lecture Note Series, vol. 294, Cambridge University Press, Cambridge,
  2003. 

\end{thebibliography}
\end{document}